\def\T{{ \mathrm{\scriptscriptstyle T} }}
\title{Inferring serial correlation with dynamic backgrounds}
\author{Song Wei$^\mathrm{a}$ \quad Yao Xie$^\mathrm{a}$ \quad Dobromir Rahnev$^\mathrm{b}$\\
  $^\mathrm{a}$School of Industrial and Systems Engineering,\\ $^\mathrm{b}$School of Psychology,\\
Georgia Institute of Technology, Atlanta, Georgia,\\ 30332-0205, U.S.A. }
\begin{document}
\maketitle
\begin{abstract}
Sequential data with serial correlation and an unknown, unstructured, and dynamic background is ubiquitous in neuroscience, psychology, and econometrics. Inferring serial correlation for such data is a fundamental challenge in statistics. We propose a total variation constrained least square estimator coupled with hypothesis tests to infer the serial correlation in the presence of unknown and unstructured dynamic background. The total variation constraint on the dynamic background encourages a piece-wise constant structure, which can approximate a wide range of dynamic backgrounds. The tuning parameter is selected via the Ljung-Box test to control the bias-variance trade-off. We establish a non-asymptotic upper bound for the estimation error through variational inequalities. We also derive a lower error bound via Fano's method and show the proposed method is near-optimal. Numerical simulation and a real study in psychology demonstrate the excellent performance of our proposed method compared with the state-of-the-art.
\end{abstract}

{\small \noindent\textbf{Keywords:} Autoregressive time series; 
High-dimensional lasso; 
Non-stationarity; 
Total variation constraint; 
Variational inequality.}

\section{Introduction}\label{sec:intro}

Serial correlation and serial dependence have been central to time series analysis \citep{hong2010serial}. Modern time-series data from neuroscience, psychology, and economics usually contain both a substantial serial dependence and a non-stationary drift  \citep{Akrami2018, Wexler2015, Moskowitz2012, Fischer2014, Cicchini2018, McIlhagga2008, Rahnev2015}. A well-known example comes from human reaction times, which are thought to be autocorrelated but also drift throughout an experiment \citep{Laming1968}. The drift can be due to many factors such as becoming better on the task, increased tiredness, and attention or arousal fluctuations. None of these influences take a specific parametric form. While some (e.g., learning or fatigue) are likely to be monotonic, others (e.g., fluctuations in attention) can be expected to waver unpredictably. This non-stationary background drift is thus typically considered a nuisance variable.

It is typically of strong scientific interest to infer the presence and/or assess serial correlation's strength with an unknown and unstructured dynamic background. The magnitude of autocorrelation has direct implications for many scientific theories. For example, \citet{Fischer2014} proposed that the human brain creates a ``perceptual continuity field'' where the subjective percept at one point of time directly influences the percept within a subsequent 15-second window. Such effects are known as “serial dependence” and are an active area of research within psychology and neuroscience. Progress in this and related endeavors depends on one’s ability to estimate the magnitude of autocorrelation in certain time series, even in the presence of substantial unstructured drift. 

The most popular tool to handle the serial correlation is the autoregressive time series. However, the presence of even a small drift can induce strong biases in the autoregressive coefficients. For example, unmodeled background drift can masquerade as autocorrelation, as illustrated in the first panel in Figure~\ref{fig:method_illus}. This issue has been pointed out before by \citet{Dutilh2012}, but no solution  exists to date. Techniques have been developed for tracking the unknown dynamic background with minimum structural assumptions \citep{hodrick1997postwar,kim2009ell_1,harchaoui2010multiple} but these approaches do not estimate the serial correlation. {\it Thus, we currently lack an efficient method to capture the autocorrelation strength in a time series in the presence of highly unstructured dynamic drifts.}

\begin{figure}[!htp]
\centering
\subfigure{\includegraphics[width=1\linewidth]{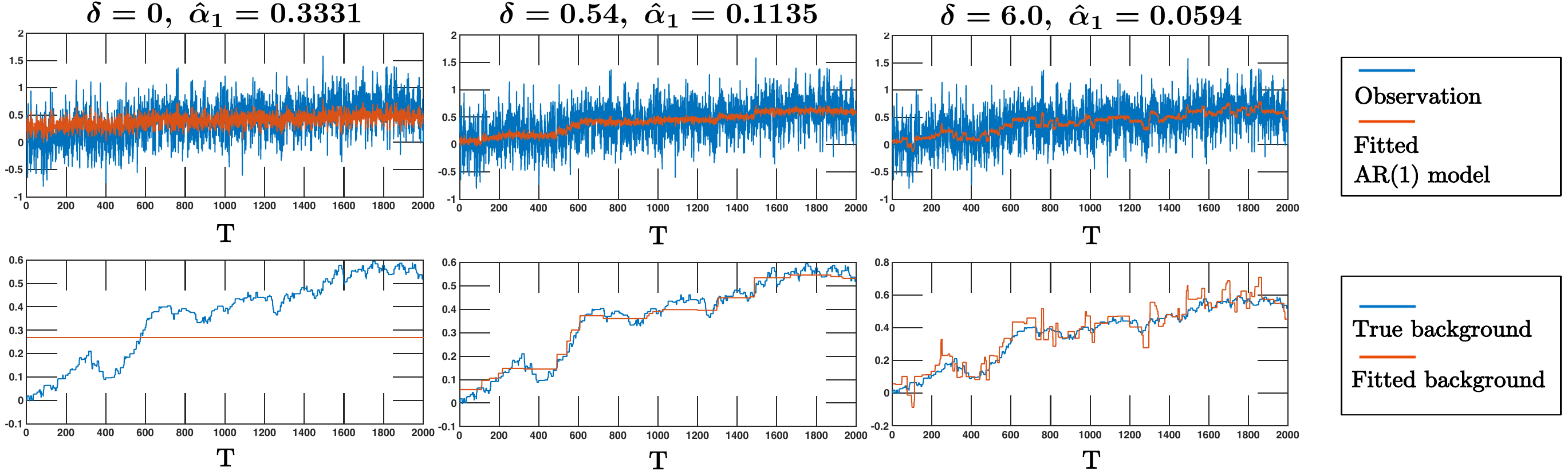}}
    \caption{An example showing that proper modeling of dynamic background is important in capturing serial correlation. The estimate $\hat \alpha_1$ is specified on the top of each column, with the ground truth $\alpha_1 = 0.1$; $\delta$ is a hyperparameter that controls the data fit and model complexity. In the first panel, we directly fit an \textsc{ar}$(1)$ model while ignoring the dynamic background,  leading to over-estimating $\hat \alpha_1$. In the third panel, the result overfits the dynamic background, leading to underestimating the autoregressive coefficient. The second panel is the desired result obtained by our method. 
    }\label{fig:method_illus}
\end{figure}


Motivated by this, we consider the following problem. Assume a sequence of observations $x_1, \ldots, x_T$ over time horizon $T$, which are generated from the underlying non-stationary \textsc{ar}$(p)$ time series model: 
\begin{equation}\label{DGP}
    x_i = f_i + \sum_{j=1}^p \alpha_j x_{i-j}  + \varepsilon_i, \quad i = 1, \ldots, T,
\end{equation}
where $\varepsilon_1,\dots,\varepsilon_T$ are i.i.d. sub-Gaussian random noise with zero mean and variance $\sigma_0^2$, $ \alpha_1,\dots,\alpha_p$ are  autoregressive coefficients, $f_1,\dots,f_T$ are deterministic dynamic background and $x_{-p+1},\dots,x_0$ are the known history. The goal is to infer the presence and/or estimate the unknown autoregressive coefficients and dynamic background simultaneously from data. {\it To ensure our model is general, we do not impose parametric or distributional assumptions on the dynamic background $f_i$'s.}

In this paper, we present a new convex optimization based method to estimate the autoregressive coefficients for sequential data in the presence of unknown dynamic background, coupled with the Ljung-Box test for model diagnosis. We cast the problem as minimizing the least square error with a total-variation constraint on the dynamic background, which encourages a piecewise constant structure and can approximate a wide range of unstructured drifts with good precision. We establish performance guarantees for the $\ell_2$ recovery error of the coefficients. To efficiently tune hyperparameters to control the bias-variance trade-off, we adopt the Ljung-Box test \citep{ljung1978measure}. Extensive numerical experiments are performed to validate the effectiveness of the proposed method. We also test our method on a real psychology dataset to demonstrate it can infer whether or not there is a statistically significant correlation. 

The rest of the paper is organized as follows. In the remainder of this section, we discuss related works. Section~\ref{sec:estimation} presents the proposed method. Section~\ref{sec:theory} contains the main theoretical results, including a non-asymptotic bounds on the $\ell_2$ estimation error for the \textsc{ar}$(1)$ model, for the ease of presentation. We discuss how to extend the result to \textsc{ar}$(p)$ models in Section~\ref{sec:extend}.  Section~\ref{sec:numerical} contains simulation results to demonstrate the good performance of our method and validates theoretical results. Section~\ref{sec:real_data} presents a real-data study from a psychology experiment. Finally, Section~\ref{sec:discussion} summaries the paper.


\subsection{Related work}\label{subsec:literature}


Standard time series models \citep{brockwell1991time} such as autoregressive and moving average models do not include dynamic backgrounds. On the other hand, the classical approach to capture dynamic background usually makes strong structural assumptions such as the linear trend, periodical trend  \citet{clark1987cyclical} or hidden Markov model \cite{hamilton1989new}. Our problem involves a highly unstructured background. This requires new solution approach; moreover, existing theory does not apply because the unstructured dynamic background leads to a non-stationary time series, which does not satisfy the strong-mixing condition. This disables us from using asymptotic results in the classic time series literature. 

Recent works for similar problems also use convex optimization to fit the dynamic background while making few structural assumptions. This line of work typically considers solving a least square problem with various penalties or constraints to encourage desired structures on the fitted background, which can approximate the unknown ground-truth. For instance, H-P filter \citep{hodrick1997postwar} imposed $\ell_2$ penalty on the second-order difference to encourage a smooth background;  \citet{kim2009ell_1} considered a variant of H-P filter with an $\ell_1$ regularization function to capture a piece-wise linear background. Another related work \citep{harchaoui2010multiple} considered change-point detection in the means using least square estimation with total variation penalty; since the number of change points is unknown, the work essentially estimates a piece-wise constant background. While many advances have been achieved, these existing works have not considered serial correlation together with the dynamic background.

Our proposed method is  related to variable fusion  \citep{land1997variable} and fused lasso \citep{tibshirani2005sparsity}. Here the unstructured, dynamic background leads to a high-dimensional problem: we have $T$ equations and $T+p$ variables; the optimal solution is not unique. Thus, we borrow the analytical technique in analyzing high-dimensional lasso, particularly the restricted eigenvalue conditions for the design matrix \citep{bickel2009simultaneous,meinshausen2009lasso,van2009conditions} to derive the theoretical results, while further exploiting the special structure of our design matrix.

There are two closely related recent works: \citet{xu2008bootstrapping} used polynomials to approximate the dynamic background, and \citet{zhang2020real} developed an online forecasting algorithm based on least square estimation with $\ell_2$ variable fusion constraint. These works do not explicitly consider highly unstructured backgrounds. We compare with both methods via numerical simulations in Section~\ref{sec:numerical} and show the advantage of our approach when there are dynamic, unstructured backgrounds; moreover, we also present a method for hyperparameter selection based on the Ljung-Box test.

\section{Proposed Method}\label{sec:estimation}


\subsection{Total variation constrained least square estimation}

Consider a {\it total variation constrained least square estimator} to estimate the autoregressive coefficients and the dynamic background simultaneously, which is obtained by solving the following convex optimization problem:
\begin{equation}\label{cvx_opt_1}
\begin{array}{rl}
\underset{\alpha_1\dots,\alpha_p,f_1,\dots,f_T}{\mbox{minimize}} & \frac{1}{2T} \sum_{i=1}^T \left(x_i - \sum_{j=1}^p \alpha_j x_{i-j} - f_i\right)^2\\
\mbox{subject to} & \sum_{i=1}^{T-1} |f_{i+1} - f_{i}| < \delta,
\end{array}
\end{equation}
where $\delta$ is a user-specified hyperparameter (the selection of $\delta$ is discussed in Section \ref{subsec:tuning}).

As discussed for the problem formulation \eqref{DGP}, different from the conventional autoregressive model, here we consider an {\it unknown} and {\it time-varying background}. Since the number of observations and the number of parameters both grow at the same rate as the time horizon $T$ increases, we cannot uniquely recover the parameters using the available observations. Thus, we impose a total variation constraint on the dynamic background, essentially choosing one solution with the smallest variations. Such an approach can serve as a good approximation to a broad class of unstructured, dynamic backgrounds.




\subsection{Hyperparameter tuning procedure}\label{subsec:tuning} 


We will show that the choice of the hyper-parameter $\delta$ in (\ref{cvx_opt_1}) will critically impact its solution (the recovered dynamic background and the AR coefficient). As illustrated in the first panel in Figure~\ref{fig:method_illus}, setting $\delta = 0$ will result in a very simple  \textsc{ar}$(1)$ model, but a very biased estimate $\hat \alpha_1$. Clearly, this model under-fits data. On the other hand, the third panel in Figure~\ref{fig:method_illus} shows that when $\delta$ is too large, the fitted model will have a small empirical loss but overfitted background, which still results in very biased $\hat \alpha_1$. From the second panel in Figure~\ref{fig:method_illus}, we can see the fitted piecewise constant background faithfully captures the dynamics, and this model yields a very accurate estimate $\hat \alpha_1$.  

Figure~\ref{fig:method_illus} illustrates that $\delta$ controls the bias-variance trade-off: a larger $\delta$ leads to a smaller fitting error, but an overfitted background, and thus the estimated AR coefficients are biased. Therefore, we cannot use the fitting error to tune $\delta$. Instead, we choose $\delta$ by the Ljung-Box test, which can test the model's goodness-of-fit by checking the remaining serial correlation in the residual sequence. This test provides a $p$-value to quantify the goodness-of-fit. Since larger $p$-value indicates less remaining serial correlation in the residuals, we select $\delta$ with the maximum $p$-value. Details of parameter tuning procedure can be found in Appendix~\ref{appendix:tuning_CI}.

Here we want to comment that we cannot use the popular cross-validation technique to choose $\delta$. The cross-validation splits the data into training data and testing data. Typically, the model for the training data and the test data are identical; thus, cross-validation error can be used to estimate the actual test error. However, here since our background is dynamic and different on the test and the training data, we cannot apply model fitted on training data to the test data to tune hyperparameter.

\subsection{Bootstrap confidence interval}

Finally, we present two bootstrap methods to construct confidence intervals for the autoregressive coefficients. For serially correlated data, we cannot use the conventional bootstrap for i.i.d. data \citep{efron1992bootstrap} but instead using the following techniques: (i) Wild bootstrap \citep{wu1986jackknife}, which resamples from the fitted residuals and (ii) a variant \citep{kunsch1989jackknife} of local moving block bootstrap, which is designed for non-stationary time series. Details of both methods can be found in Appendix~\ref{appendix:tuning_CI}.


\section{Non-asymptotic bounds for \textsc{ar}$(1)$ sequences}\label{sec:theory}

Now we present the main theoretical results, including the upper and lower bounds for the parameter $\ell_2$ recovery errors using (\ref{cvx_opt_1}).


\subsection{Main insight: recoverable region}


We start with some necessary definitions.
Define a sequence as being {\it $\varepsilon$-recoverable}, if the $\ell_2$ recovery error using (\ref{cvx_opt_1}) is smaller than $\varepsilon$. The collection of recoverable sequences forms the {\it $\varepsilon$-recoverable region}. We make the following assumptions to ensure the dynamic background does not change too drastically. (i) The background contains at most $s$ changes over the time horizon $T$, i.e., it consists of at most $s+1$ pieces. In other words, the rate-of-change for the dynamic background is on the order of $s/T$, and the change does not happen very often.
(ii) The magnitude of the each change is upper-bounded by $\delta_0$: 
\begin{equation}\label{slowly_varying}
   |\Delta_i| \leq \delta_0, \quad i = 2,\dots,T,
 \end{equation} 
where $\Delta_i = f_i - f_{i-1},\ i = 2,\dots,T$ are one-step changes of the dynamic background.

It is known that the least square estimator is consistent for any stationary autoregressive time-series. Intuitively, the ``smaller'' the dynamic background,  the ``closer'' the sequence is to its stationary counterpart. 
A fundamental question is  \textit{What ranges of dynamic backgrounds and autoregressive coefficient can be estimated accurately?} 
%
We answer this question via Theorems~\ref{thm_main_upper} and \ref{thm_main_lower}, which establish the upper and lower bounds of the recovery error that depend on the number of changes $s$ and the size of the change $\delta_0$. However, in our setting, $s(T)$ is non-decreasing with respect to the time horizon $T$. Thus, we cannot expect the recovery error to shrink to zero with increasing $T$ as the usual asymptotic analysis. 

Theorem~\ref{thm_main_upper} establishes the sufficient condition to ensure  the $\ell_2$ recovery error does not exceed $\varepsilon$: \begin{equation}\label{epsion_detect}
    \min \left\{\Tilde{C}_1  s^{3/2} \delta_0 ,2\sqrt{{\operatorname{vol}(\mathcal{S})}/{\pi}} \right\} +  s^{1/2}\delta_0 \leq \varepsilon - \delta := \varepsilon_\delta,
\end{equation}
where $\Tilde{C}_1$ is a positive constant, $\mathcal{S}$ is a user-specified set that the true parameter resides in and $\operatorname{vol}(\cdot) $ denotes the volume of a set. Therefore,  \eqref{epsion_detect} is a sufficient condition that $s$ and $\delta_0$ of a $\varepsilon$-recoverable sequence needs to satisfy, and thus it defines the boundary for $\varepsilon$-recoverable region as illustrated in Figure~\ref{fig:recoverable_illustration}. The recoverable region is the union of the blue and the green regions in Figure~\ref{fig:recoverable_illustration}: the green region does not vary but
the blue region shrinks with increasing $\operatorname{vol}(\mathcal{S})$ and eventually vanishes once $2\sqrt{\operatorname{vol}(\mathcal{S})/\pi}$ exceeds $\varepsilon_\delta$. This can be explained by that when the unknown coefficients reside in a larger $\mathcal{S}$, it is more difficult to recover the true parameters for the same accuracy $\varepsilon$, which leads to a smaller recoverable region. Moreover, as illustrated in Figure \ref{fig:recoverable_illustration}, in the $\varepsilon$-recoverable region, there exists a collection of instances (the region shaded in red dashed lines) where the best achievable performance (lower bound) meets the upper bound of our proposed estimator over the finite time horizon. 

\begin{figure}[H]
\centering
\subfigure{\includegraphics[width=\linewidth]{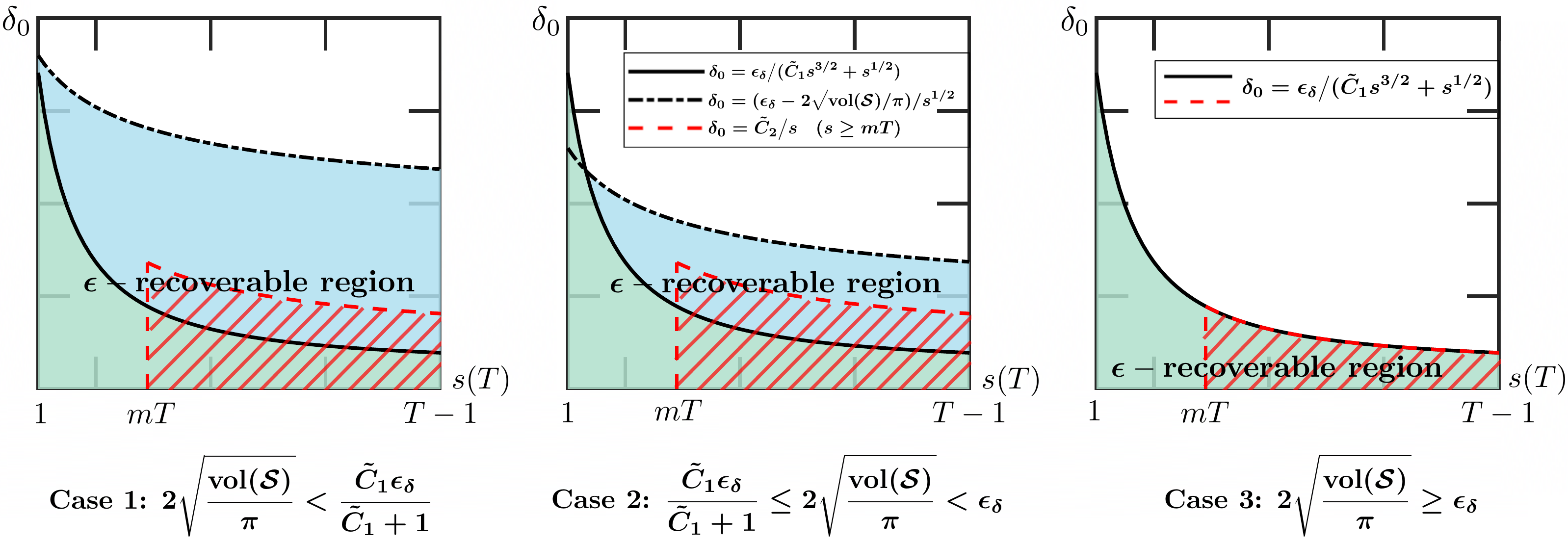}}
    \caption{Illustration of $\varepsilon$-recoverable region for fixed $T$. This region becomes smaller when hypothesis class $\mathcal{S}$ grows larger. The expressions for the curves in case 1 are the same with case 2. The upper bound \eqref{upper_error_bound} will be nearly tight in the region shaded in red dashed lines.}\label{fig:recoverable_illustration}
\end{figure}

\subsection{Preliminaries}

Denote the observation $ x_{i:j} = (x_i,\dots,x_j)^\T$, where the superscript $^\T$ denotes vector/matrix transpose. Given $ x_{1:T}$ and known history $x_0$, we aim to estimate coefficient vector $\beta = (\alpha_1, \mu, \Delta_2,\dots,\Delta_T)^\T \in \RR^{T+1}$, where $\mu = f_1 , \Delta_i = f_i - f_{i-1} , i = 2,\dots,T$. 
Further denote the random noise vector by $ \varepsilon_{1:T} = (\varepsilon_1,\dots,\varepsilon_T)^\T,$ and the random {\it design matrix} by $\XX = ( x_{0:T-1},
L) \in \RR^{T \times (T+1)},$ where $L \in \mathbb R^{T \times T}$ is the lower triangular matrix with non-zeros entries all being ones. For notational  simplicity, we rewrite \eqref{DGP} as\begin{equation}\label{DGP2}
 x_{1:T} = \XX \beta +  \varepsilon_{1:T}.
\end{equation}


Denote $ \Delta =  (0,0,\Delta_2,\dots,\Delta_T)^\T$ and $\norm{\cdot}_q$ to be $\ell_q$ vector norm. Here, we slightly abuse the notation and assume $\beta$ (instead of $\Delta$) has at most $s$ non-zero entries. Since the total variation constraint only encourages a sparse structure on $\Delta$, $\Delta$ will have at most $s-2$ non-zero entries:\begin{equation*}
    \mathcal{B} = \{ \Delta: \norm{ \Delta}_0 = s-2 \in \{1,\dots,T-1\}, \  \norm{ \Delta}_\infty\leq\delta_0 \}.
\end{equation*}
The space for the unknown true coefficient vector is defined as\begin{equation}\label{para_space}
    \Theta_T = \{\beta: (\alpha_1,\mu) \in \mathcal{S}',  \Delta \in \mathcal{B} \}.
\end{equation}The hypothesis class $\cX$, i.e. the set where we want to estimate the coefficients, is defined as \[\cX = \{\beta: (\alpha_1,\mu) \in \mathcal{S}, \  \norm{ \Delta}_1 < \delta\},\]where
\[\mathcal{S} = \{(\alpha_1,\mu): \alpha_1^2 + \mu^2 \leq \delta_s^2 \}.\]
Here, $\delta_s$ is a user-specified parameter, which specifies the size of hypothesis class $\cX$. We assume $\cX$ contain the ground truth, i.e., $\mathcal{S}' \subset \mathcal{S}$.

Our goal is to estimate the unknown $\beta \in \Theta_T$ by $\hat \beta_T \in \mathcal{X}$ by solving the following convex optimization problem:
\begin{equation}
    \label{l1_constraint} \hat \beta_T =  \arg \min_{\beta \in \cX} \frac{1}{2T}   \norm{ x_{1:T} - \XX \beta}_2^2.
\end{equation}
Note that this is slightly different from \eqref{cvx_opt_1} (which only has constraint $\norm{\Delta}_1 < \delta$). However, since $\delta_s$ is typically set to be sufficiently large such that the solution will not occur on the boundary of $\mathcal{S}$, \eqref{l1_constraint} leads to the same estimate as \eqref{cvx_opt_1}. 

\subsection{Upper bound}\label{subsec:insight}

Here, we start with a non-asymptotic upper bound for the $\ell_2$ recovery error for the model coefficients and derive the condition for recoverable sequences in \eqref{epsion_detect}.
\begin{theorem}[Upper Bound on $\ell_2$ estimation error] \label{thm_main_upper} For $\hat \beta_T$ defined by \eqref{l1_constraint}, for any $ A_1 >1$, $A_2 > \sqrt{A_1}$ and $A_3 > 0$, and for any selected hyperparameter $\delta$, with probability at least $1- (2T)^{1-A_1} -(2T)^{1-A_2^2/A_1}- 2(2T)^{-A_3^2/A_1^2}$, we have\begin{equation}\label{upper_error_bound}
    \norm{\hat \beta_T - \beta}_2 \leq \min \left\{\Tilde{C}_1 \sqrt{s} \max \left\{s \delta_0 , \delta\right\},2\sqrt{{\operatorname{vol}(\mathcal{S})}/{\pi}} \right\} + \delta + \sqrt{s}\delta_0,
\end{equation}where $\Tilde{C}_1$ is a positive constant dependent on $A_1, A_2$ and $A_3$.
\end{theorem}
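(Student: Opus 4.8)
Write $v = \hat\beta_T - \beta$ and decompose it block-wise as $v = (v_{\alpha}, v_{\mu}, v_{\Delta})$ according to the partition of $\beta$ into the autoregressive coordinate, the intercept $\mu=f_1$, and the increment block $\Delta$. The plan is the classical high-dimensional ``basic inequality + restricted eigenvalue'' route, adapted to the constrained (rather than penalized) estimator and to the special design $\XX=(x_{0:T-1},L)$. First I would record the first-order optimality condition for the convex program \eqref{l1_constraint}: since $\hat\beta_T$ minimizes the quadratic objective over the convex set $\cX$, $\langle \XX^\T(\XX\hat\beta_T - x_{1:T}),\,\tilde\beta-\hat\beta_T\rangle\ge 0$ for every $\tilde\beta\in\cX$. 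If the true $\beta$ is feasible, take $\tilde\beta=\beta$ and use \eqref{DGP2} to obtain the basic inequality $\tfrac1T\norm{\XX v}_2^2 \le \tfrac1T\langle \XX^\T\varepsilon_{1:T},\, v\rangle$. If instead $\norm{\Delta}_1\ge\delta$ (so $\beta\notin\cX$), replace $\beta$ by its $\ell_1$-projection $\bar\beta$ onto $\cX$, which perturbs only the increment block; because $\Delta$ has at most $s-2$ nonzeros of size $\le\delta_0$, one gets $\norm{\bar\beta-\beta}_2 \le \sqrt{s}\,\delta_0 + \delta$ up to constants, and this term is carried along by the triangle inequality --- this is the source of the additive $\delta+\sqrt{s}\delta_0$ in \eqref{upper_error_bound}.

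Next I would control the stochastic term $\tfrac1T\langle \XX^\T\varepsilon_{1:T},v\rangle$ by splitting over the columns of $\XX$. The lagged-observation column contributes $\tfrac1T\big(\sum_i x_{i-1}\varepsilon_i\big)v_\alpha$, where $\sum_i x_{i-1}\varepsilon_i$ is a martingale sum; I would bound it by a self-normalized / Freedman-type martingale inequality together with an a priori control of the predictable quadratic variation $\sum_i x_{i-1}^2$ (using the slowly-varying assumption \eqref{slowly_varying} and sub-Gaussianity of $\varepsilon_i$). The block corresponding to the columns of $L$ contributes $\tfrac1T\langle L^\T\varepsilon_{1:T},(v_\mu,v_\Delta)\rangle \le \tfrac1T\norm{L^\T\varepsilon_{1:T}}_\infty\big(|v_\mu|+\norm{v_\Delta}_1\big)$, and $\norm{L^\T\varepsilon_{1:T}}_\infty$ is the maximum of reversed partial sums of sub-Gaussian noise, handled by a union bound / maximal inequality and of order $\sigma_0\sqrt{T\log T}$. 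Tracking the three resulting tail events --- the martingale deviation, the partial-sum maximum, and the quadratic-variation bound --- is what produces the three probabilities $(2T)^{1-A_1}$, $(2T)^{1-A_2^2/A_1}$, $2(2T)^{-A_3^2/A_1^2}$ and the dependence of $\tilde C_1$ on $A_1,A_2,A_3$.

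The structural heart of the argument is a cone condition followed by a restricted eigenvalue (RE) bound for $\XX$. From the feasibility $\norm{\hat\Delta}_1<\delta$ and the $(s-2)$-sparsity of $\Delta$ on a set $S$, one derives $\norm{v_{\Delta,S^c}}_1 \le \norm{v_{\Delta,S}}_1 + \delta$, so $v$ is effectively $s$-sparse up to a slack $\delta$; restricted to this cone I would prove $\tfrac1T\norm{\XX v}_2^2 \ge \kappa\,\norm{v}_2^2$ for a suitable $\kappa$. This is the main obstacle: $L$ is badly conditioned ($\lambda_{\min}(\tfrac1T L^\T L)\asymp 1/T$), so the bound cannot come from a global eigenvalue; instead one must exploit the step-function structure of $Lw$ for $s$-sparse $w$, concentration of the Gram matrix of the non-stationary AR sequence $x_{0:T-1}$ around its (slowly drifting) stationary analogue, and the cross-terms between the lag column and the $L$-columns. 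I expect $\kappa$ to degrade polynomially in $s$, which is precisely why the error does not vanish with $T$ and why the bound carries a $\sqrt{s}\max\{s\delta_0,\delta\}$ factor.

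Finally I would combine the pieces. Inserting the RE lower bound and the stochastic upper bound into the basic inequality, and using the cone relation to pass from $\norm{v_\Delta}_1$ to $\sqrt{s}\,\norm{v}_2$ plus a $\max\{s\delta_0,\delta\}$ remainder, yields a quadratic inequality in $\norm{v}_2$; solving it gives $\norm{v}_2 \lesssim \tilde C_1\sqrt{s}\max\{s\delta_0,\delta\}$ (with $\tilde C_1$ absorbing $1/\kappa$ and the martingale constants). Intersecting this with the deterministic bound $\norm{(v_\alpha,v_\mu)}_2\le 2\delta_s = 2\sqrt{\operatorname{vol}(\mathcal S)/\pi}$, valid because both $\hat\beta_T$ and $\beta$ have their $(\alpha_1,\mu)$-blocks in the disk $\mathcal S$, produces the $\min\{\cdot,\cdot\}$; adding back the projection bias $\delta+\sqrt{s}\delta_0$ from the first step gives exactly \eqref{upper_error_bound}. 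The condition \eqref{epsion_detect} for the $\varepsilon$-recoverable region then follows by requiring the right-hand side of \eqref{upper_error_bound}, with $\delta\le\delta_0 s^{1/2}$-type choices absorbed, to be at most $\varepsilon$.
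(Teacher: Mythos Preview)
Your overall architecture (basic inequality, cone condition, RE-type lower bound, stochastic control of $\XX^\T\varepsilon_{1:T}$) matches the paper's, and your identification of the three high-probability events is essentially correct. However, there is a genuine gap in the structural step, and a misattribution of the additive $\delta+\sqrt{s}\delta_0$ term.

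\textbf{The RE step fails as written.} You propose to prove $\tfrac1T\norm{\XX v}_2^2 \ge \kappa\norm{v}_2^2$ on the sparse cone and anticipate that $\kappa$ ``degrades polynomially in $s$.'' In fact $\kappa$ degrades like $1/T$ even for $1$-sparse $v$: take $v_\alpha=v_\mu=0$ and $v_\Delta$ equal to the last coordinate vector; then $\XX v$ is the last column of $L$, so $\tfrac1T\norm{\XX v}_2^2=1/T$. Thus no useful RE bound on the \emph{full} error vector is available, and the quadratic-inequality route to $\norm{v}_2\lesssim\tilde C_1\sqrt{s}\max\{s\delta_0,\delta\}$ collapses. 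The paper avoids this entirely: its RE-type lower bound (their Lemma~\ref{lower_bound_lma}) is only on $\norm{e_{I_1}}_2$, the two-dimensional $(\alpha_1,\mu)$ block, using $\phi_{\min}(2)$ --- the smallest eigenvalue of the $2\times 2$ Gram of $(x_{0:T-1},\mathbf 1)$, which is constant order. The contributions of the $L$-columns enter only as \emph{subtractive} corrections $C_{\delta,\delta_0,s}\asymp\sqrt{s}(s\delta_0+\delta)$ in the lower bound, which is where the $\sqrt{s}\max\{s\delta_0,\delta\}$ factor actually originates. So the $s$-dependence is not in $\kappa$ but in these correction terms.

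\textbf{The source of $\delta+\sqrt{s}\delta_0$.} This is not a projection bias from an infeasible $\beta$. The paper works with the penalized form (via Lagrangian duality), so feasibility never arises; the term is simply the trivial triangle-inequality bound on the increment block, $\norm{e_{I_2\cup I_3}}_2=\norm{\hat\Delta-\Delta}_2\le\norm{\hat\Delta}_2+\norm{\Delta}_2\le\delta+\sqrt{s}\delta_0$. Correspondingly, the $\min\{\cdot,\,2\sqrt{\operatorname{vol}(\mathcal S)/\pi}\}$ in \eqref{upper_error_bound} is a $\min$ of two bounds on $\norm{e_{I_1}}_2$ alone (the RE-derived one from Proposition~\ref{thm_upper_bound} and the diameter bound $2\delta_s$), and the additive $\delta+\sqrt{s}\delta_0$ is then tacked on for the $\Delta$-block. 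Your assembly --- RE bound on all of $v$, intersect with the diameter bound on $(v_\alpha,v_\mu)$, then add projection bias --- does not reproduce this decomposition and, because of the RE failure above, cannot be carried through.
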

Note that \eqref{upper_error_bound} implies smaller $s$ and $\delta_0$ will lead to smaller error. To ensure the upper bound is less than a pre-specified $\varepsilon>0$, we choose $\delta$ at most $\varepsilon/(1 + \Tilde{C}_1 \sqrt{T})$, which leads to the condition for $\varepsilon$-recoverable sequences in \eqref{epsion_detect}. 

\subsection{Lower bound} 

Now we present a lower bound for the $\ell_2$ recovery error using triangle inequality and then improve it via Fano's method.

A naive lower bound can be derived by triangle inequality:$$\norm{\hat \beta_T - \beta}_2 \geq  \norm{\hat \Delta - \Delta }_2\geq \norm{ \Delta}_2 - \norm{\hat \Delta}_2 \geq \norm{ \Delta}_2 - \delta,$$
where $\norm{\hat \Delta}_2 \leq \delta$ due to the total variation constraint. 
However, since we usually do not know $\norm{ \Delta}_2 = O(\sqrt{s}\delta_0)$ {\it a priori}, we cannot set $\delta$ close to $\norm{ \Delta}_2$ to make sure the best achievable performance. Besides, to control the worst performance (i.e. upper bound), $\delta$ is $O(\varepsilon/\sqrt{T})$. Therefore, for series $ x_{1:T}$ with a large $\norm{ \Delta}_2$, we should expect that the recovery error is $\Theta(\sqrt{s}\delta_0)$. 
However, this type of series is typically outside the recoverable region. We are more interested in the lower bound for those instances with much smaller $\norm{ \Delta}_2$. For a certain type of series (which satisfies assumptions \eqref{sparsity} and \eqref{accuracy} below), we can obtain a tighter lower bound by Fano's method as follows:
\begin{theorem}[Lower bound on $\ell_2$ estimation error]\label{thm_main_lower}If there exist $ \Tilde{C}_2>0$ and $0<m<M\leq1$ such that
\begin{align}
    \label{sparsity}
    s(t) \in [mt,Mt], \quad  t = 1,\dots,T_0,\\
    \label{accuracy}
     s(t) \delta_0(t) \leq \Tilde{C}_2, \quad  t =1,\dots,T_0,
\end{align} then for any $C_6 \in (0,1)$ and for any estimator $\Tilde{\beta}_T$, we have \begin{equation}
    \sup_{\beta \in \Theta_T}  \mathrm{pr}\left(\norm{\Tilde{\beta}_T-\beta}_2 \geq C_2 \right) \geq  1 - C_6,
\end{equation} and \begin{equation}\label{fano_ineq_final}
    C_2 \geq \frac{1}{2} \exp\left\{-\frac{C_3 + C_4 \Tilde{C}_2 M + C_5 \Tilde{C}_2^2 M^2 + (\log 2)/T}{C_6 m}\right\},
\end{equation}where $C_3$, $C_4$ and $C_5$ are some positive constants only dependent on $\delta_s$.
\end{theorem}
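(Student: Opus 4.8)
The plan is to apply the local Fano method over a carefully constructed finite packing subset of the parameter space $\Theta_T$, where the packing varies only in the change vector $\Delta$ while keeping $(\alpha_1,\mu)$ fixed. First I would construct a family of hypotheses $\{\beta^{(1)},\dots,\beta^{(N)}\}\subset\Theta_T$ as follows: fix $(\alpha_1,\mu)$ at some interior point of $\mathcal S$, and let the support of $\Delta$ range over the $\binom{T-1}{s-2}$ choices of $s-2$ change-points, assigning each selected coordinate a value of magnitude $\delta_0$ with signs chosen (via a Varshamov--Gilbert / Hamming-code argument) so that any two hypotheses differ in $\Omega(s)$ coordinates. This yields a $2\sqrt{s}\,\delta_0/c$-separated packing of cardinality $\log N = \Theta(s\log(T/s))$ in the $\ell_2$ metric on $\beta$, using assumption \eqref{sparsity} to lower-bound $\log N$ by $\Omega(s)\ge\Omega(mT)$.

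Second I would bound the pairwise Kullback--Leibler divergences between the induced distributions on $x_{1:T}$. Since the two models share the same noise law and differ only in the linear term $\XX\beta^{(i)}$ versus $\XX\beta^{(j)}$, and the noise is sub-Gaussian with variance $\sigma_0^2$, the divergence is controlled by $\|\XX(\beta^{(i)}-\beta^{(j)})\|_2^2/(2\sigma_0^2)$; conditioning on the design and exploiting the cumulative structure of $L$ (a single change $\Delta_k$ perturbs all subsequent observations), I expect $\mathrm{KL}\le C\,\|\Delta^{(i)}-\Delta^{(j)}\|_2^2\cdot(\text{something}) \lesssim C\,s\,\delta_0^2\cdot s = C(s\delta_0)^2$, and assumption \eqref{accuracy} then caps this at $C\tilde C_2^2$. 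The cleaner route is to bound the expectation of $\|\XX\Delta\|_2^2$ — here the AR recursion makes $x_{i-1}$ depend on earlier $\Delta$'s, so I would expand the divergence in the quadratic form and collect the $O(1)$, $O(\tilde C_2 M)$, and $O(\tilde C_2^2 M^2)$ terms that appear as $C_3, C_4\tilde C_2 M, C_5\tilde C_2^2 M^2$ in \eqref{fano_ineq_final}, with the $\delta_s$-dependence entering through the fixed $(\alpha_1,\mu)$ block.

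Third, Fano's inequality gives $\inf_{\tilde\beta_T}\sup_{\beta}\mathrm{pr}(\|\tilde\beta_T-\beta\|_2\ge C_2)\ge 1-\frac{\bar I+\log 2}{\log N}$, where $\bar I$ is the average (or maximal) pairwise KL divergence and $C_2$ is half the packing radius. Substituting the divergence bound from step two, $\log N\ge C_6 m T$ after absorbing constants, and solving the resulting inequality $1-\frac{C_3+C_4\tilde C_2 M+C_5\tilde C_2^2 M^2+\log 2/T \cdot(\text{scaling})}{C_6 m}\ge$ (target probability) for the radius, rearranged into exponential form because the packing radius $\sqrt{s}\delta_0$ must be related back to $\tilde C_2/s$ and $s\in[mT,MT]$ forces the $1/m$ factor into the exponent, yields \eqref{fano_ineq_final}. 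The main obstacle I anticipate is step two: the design matrix $\XX$ is random and, crucially, its first column $x_{0:T-1}$ is \emph{itself} generated by the non-stationary recursion and hence correlated with $\Delta$, so the KL divergence is not simply a fixed quadratic form in $\Delta$. Handling this requires either conditioning on the design and controlling $\|L\Delta\|_2$ deterministically via $\|\Delta\|_0\le s$ and $\|\Delta\|_\infty\le\delta_0$ (giving $\|L\Delta\|_2^2\le s^2\delta_0^2 T$-type bounds, too lossy) or, more carefully, propagating the perturbation through the AR(1) transfer function $\sum_k \alpha_1^{i-k}$ and using $|\alpha_1|\le\delta_s$ together with \eqref{accuracy} to keep the accumulated variance at the claimed $O(\tilde C_2)$ and $O(\tilde C_2^2)$ orders — this bookkeeping, and isolating exactly which constants depend only on $\delta_s$, is where the real work lies.
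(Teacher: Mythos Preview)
Your proposal has a genuine gap in the packing construction that breaks the argument. You fix $(\alpha_1,\mu)$ and vary only $\Delta$, which makes the packing radius of order $\sqrt{s}\,\delta_0$. But under assumption \eqref{accuracy}, $s\delta_0\le\tilde C_2$ forces $\sqrt{s}\,\delta_0\le \tilde C_2/\sqrt{s}\le \tilde C_2/\sqrt{mT}\to 0$, so the radius---and hence your $C_2$---shrinks to zero with $T$, whereas the theorem asserts a \emph{constant-order} lower bound. The paper avoids this by packing the full space $\Theta_T$, exploiting that the $(\alpha_1,\mu)$-block $\mathcal S$ has constant radius $\delta_s$: a volume argument (Lemma~\ref{packing_num}) then gives $\log N\ge s\log(1/\varepsilon)$ for any fixed $\varepsilon=2C_2$, and the exponential in \eqref{fano_ineq_final} arises simply by solving $C_6\, s(T)\log(1/(2C_2))=\text{(KL bound)}+\log 2$ for $C_2$, with $s(T)\ge mT$ supplying the $1/m$ in the exponent. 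Your explanation of the exponential (``packing radius $\sqrt{s}\delta_0$ related back to $\tilde C_2/s$'') is not the mechanism.

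A second issue is the KL step. Because the packing must let $\alpha_1$ vary, the two hypotheses induce Gaussians $N(\tau_i,\Sigma_i)$ on $x_{1:T}$ with \emph{different covariances}, not just different means; the KL therefore contains a trace term $\operatorname{tr}(\Sigma_2^{-1}\Sigma_1)-T$ of order $T$, and this is exactly the source of the constant $C_3$ in \eqref{fano_ineq_final}. Your formula $\|\XX(\beta^{(i)}-\beta^{(j)})\|_2^2/(2\sigma_0^2)$ treats the design as fixed and implicitly assumes equal covariances, so it cannot produce the $C_3$ term. The paper instead writes $x_t$ explicitly as a linear function of $(\mu,\Delta_{2:t},\varepsilon_{1:t})$ via the AR transfer function, reads off $\tau$ and $\Sigma$, and bounds both the trace term and the quadratic form $(\tau_1-\tau_2)^\T\Sigma_2^{-1}(\tau_1-\tau_2)$ using $|\alpha|\le\delta_s$; the latter yields the $C_4\tilde C_2 M$ and $C_5\tilde C_2^2 M^2$ contributions after summing $|\tau^{(t)}|\lesssim(\delta_s+s(t)\delta_0)/(1-\delta_s)$ over $t$ and invoking \eqref{sparsity}--\eqref{accuracy}.
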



Under assumptions \eqref{sparsity} and \eqref{accuracy}, the naive lower bound will be of order $O(1/\sqrt{T})$ and therefore the lower bound $C_2$ (constant order) will be tighter. 
We denote it by $C_{\rm lower}$. Besides, \eqref{upper_error_bound} ensures the upper bound will be at most $2\sqrt{\operatorname{vol}(\mathcal{S})/\pi}$ plus a $O(1/\sqrt{T})$ term. Thus, we can also obtain a constant order upper bound $C_{\rm upper}$. In this special case, the $\ell_2$ estimation error will stay within a constant order interval $\norm{\hat{\beta}_T-\beta}_2 \in [C_{\rm lower},C_{\rm upper}]$ with constant probability for $t =1,\dots,T_0$. We illustrate this constant order interval in the region shaded in green in Figure~\ref{fig:bound_illustration}. 

\begin{figure}[!htp]
\centering
\subfigure{\includegraphics[scale=0.45]{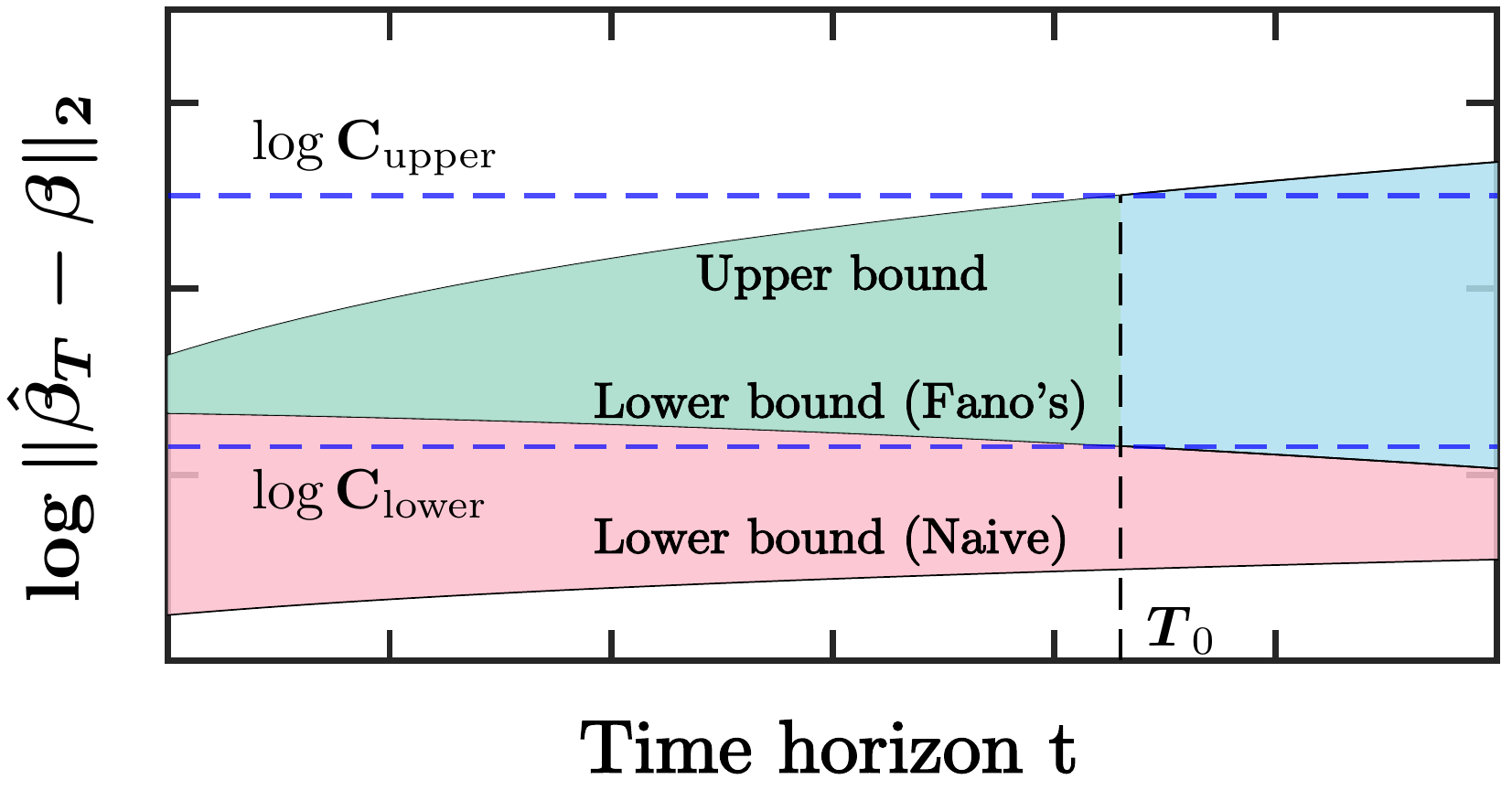}}
\caption{Illustration of the naive lower bound  the bound derived by Fano's method for $\ell_2$ error under assumptions \eqref{sparsity} and \eqref{accuracy}. The trajectories of $\ell_2$ estimation error for those instances covered by the red dashed lines in Figure~\ref{fig:recoverable_illustration} stay within the region shaded in green. }\label{fig:bound_illustration}
\end{figure}

Figure~\ref{fig:bound_illustration} shows that the upper bound stays close to Fano's lower bound on a finite time horizon, which demonstrates the near-optimality of the proposed method. Besides, the green region in Figure~\ref{fig:bound_illustration} illustrates the constant probability estimation error trajectories for those instances within the red dashed region in Figure~\ref{fig:recoverable_illustration}.


\subsection{Proof outline}\label{Proof_outline}

We now present the proof idea for the main theorems. Detailed proofs for Theorems~\ref{thm_main_upper} and ~\ref{thm_main_lower} as well as Propositions~\ref{thm_upper_bound} and ~\ref{thm_lower_bound} are deferred to Appendix~\ref{appendix:proof}.

The proof of Theorem~\ref{thm_main_upper} is largely based on {\it Restricted Eigenvalue} condition and {\it Variational Inequality}. Consider the penalized form of \eqref{l1_constraint}
\begin{equation}
        \hat \beta_T = \label{l1_penal}\arg \min_{\beta \in \RR^{T+1}} \frac{1}{2T} \norm{ x_{1:T} - \XX \beta}_2^2 + \lambda \norm{ \Delta}_1,
\end{equation}
where $\lambda$ is the tuning parameter. By Lagrangian duality, we can show that \eqref{l1_penal} is equivalent to \eqref{l1_constraint}. It is known that \citep{wainwright2019high} there is a one-to-one correspondence between $\delta$ and $\lambda$: if $\hat \beta_T = \hat \beta_T (\lambda)$ minimizes \eqref{l1_penal}, then it also minimizes \eqref{l1_constraint} with $\delta = \norm{\hat{{ \Delta}}}_1$.

The formulation \eqref{l1_penal} links our problem to high-dimensional lasso \citep{wainwright2019high}. This connection motivates us to invoke restricted eigenvalue condition due to \citet{bickel2009simultaneous,van2009conditions} for the design matrix to bound $\ell_2$ estimation error, since the restricted eigenvalue condition is the weakest known sufficient condition according to \citet{raskutti2010restricted}.
Although there have been works verifying restricted eigenvalue conditions for $ x_{0:T-1}$  \citep{loh2011high,basu2015regularized,wu2016performance} or $L$ \citep{harchaoui2010multiple}, they cannot be directly applied for our setting here. Specifically, expanding $x_{0:T-1}$ with a square matrix $L$ in the design matrix leads to the rank-deficiency of $\XX^\T \XX$, thus simply exploring the structure of $ x_{0:T-1}$ cannot address the problem. 

Partition the index set $\{1,\dots,T+1\}$ into three disjoint parts $I_i (i=1,2,3)$, where $I_1 = \{1,2\}$, $I_2$ is the indices for non-zero $\Delta_i$'s and $I_3$ is the indices for zeros in $\beta$. By using an index set $I$ as the subscript of a vector, we keep all entries with indices from $I$ intact and zero out entries with indices from its complement ${I}^{\mathsf{c}}$. 
Since the $\ell_1$ constraint does not encourage sparsity on $\alpha_1$ and $\mu$, we modify the definition of the restricted eigenvalues as follows
\begin{equation}
    \label{phi_minmax} \phi_{\min} (u) = \min_{e \in R_1} \frac{\norm{\XX e}_2}{\sqrt{T} \norm{e}_2}, \quad \phi_{\max} (u) = \max_{e \in R_2} \frac{\norm{\XX e}_2}{\sqrt{T} \norm{e}_2},
\end{equation}where $R_1 = \{e : 2 = \norm{e_{I_1}}_0 \leq \norm{e}_0 \leq u \}$ and $R_2 = \{e : 1 \leq \norm{e}_0 \leq u, \ \norm{e_{I_1}}_0 = 0\}$.

\begin{remark}
The smallest restricted eigenvalue can be understood as follows: take columns of $\XX$ indexed by 1, 2 and another $u-2$ indices from ${I_1}^{\mathsf{c}}$ to form a new matrix $\Tilde{\XX}$, then $\phi_{\min}(u)$ is the smallest one among eigenvalues of all possible $\Tilde{\XX}^\T\Tilde{\XX}/T$'s. Similarly, $\phi_{\max}(u)$ is the largest eigenvalue of $\Tilde{\XX}^\T\Tilde{\XX}/T$, where $\Tilde{\XX}$ is composed of $u$ columns of $\XX$ with indices chosen from ${I_1}^{\mathsf{c}}$. 
\end{remark}

In the following analysis, we use a recently developed technique based on variational inequality \citep{juditsky2019signal,juditsky2020convex} to establish the upper bound.
Consider the gradient field of the objective function in \eqref{l1_constraint}:
\begin{equation*}
    F_{ x_{1:T}}(z) = \big(   A[ x_{1:T}] \ z -  \ a[ x_{1:T}] \big)/T,
\end{equation*}where $A[ x_{1:T}] = \XX^\T \XX$ and $a[ x_{1:T}] = \XX^\T ( \sum_{i=1}^T x_i x_{i-1},   x_{1:T}^\T )^\T$. This vector filed is affine and monotone, since we can verify the symmetric matrix $A[ x_{1:T}]$ is positive semi-definite. The minimizer of \eqref{l1_constraint}, i.e. $\hat \beta_T$, is in fact the solution to the following variational inequality:
\begin{equation*}
   \text { find } z \in \mathcal{X}:\langle  F_{ x_{1:T}}(w), w-z\rangle \geq 0, \quad \forall w \in \mathcal{X}.   \label{VI_1}\tag*{{VI}$[ F_{ x_{1:T}}, \mathcal{X}]$}
\end{equation*}
Moreover, $\beta$ is zero of $\Tilde F_{ x_{1:T}}(z)$ and solution to the following variational inequality:\begin{equation*}
   \text { find } z \in \mathcal{X}:\langle \Tilde F_{ x_{1:T}}(w), w-z\rangle \geq 0, \quad \forall w \in \mathcal{X}, \label{VI_2}\tag*{{VI}$[ \Tilde F_{ x_{1:T}}, \mathcal{X}]$} 
\end{equation*}where \begin{equation*}
   \Tilde F_{ x_{1:T}}(z) =  \big( A[ x_{1:T}] \ z -  \ A[ x_{1:T}] \ \beta \big)/T.
\end{equation*}

We can see that $F_{ x_{1:T}}(z)$ and $\Tilde F_{ x_{1:T}}(z)$ only differ in the following constant term:
$$\eta =  F_{ x_{1:T}}(\beta) - \Tilde F_{ x_{1:T}}(\beta) = F_{ x_{1:T}}(\beta) =  ( A[ x_{1:T}] \ \beta -  \ a[ x_{1:T}])/T.$$ 
Intuitively, the difference between \ref{VI_1} and \ref{VI_2} should reflect the difference between the solutions to those two variational inequalities, i.e. our estimator $\hat \beta_T$ and the ground truth $\beta$. We will show how to bound the $\ell_2$ estimation error using $\norm{\eta}_\infty$ in the following theorem.




The following proposition establishes the error bound for the auto-correlation coefficient and the initial dynamic coefficient, combined.
\begin{proposition}[Upper Bound on $\ell_2$ estimation error for $\alpha_1$ and $\mu$] \label{thm_upper_bound} For $\hat \beta_T$ defined by \eqref{l1_constraint}, for $ A_1$, $A_2$, $A_3$ and tuning parameter $\delta$ in Theorem~\ref{thm_main_upper}, there exists $k \in (0,1)$ such that $k\lambda = O\left((\log T)^{3/2}/ T^{1/2}\right)$, with probability at least $1- (2T)^{1-A_1} -(2T)^{1-A_2^2/A_1}- 2(2T)^{-A_3^2/A_1^2}$, we have
\begin{equation}\label{upper_error_bound_para_of_interest}
    \sqrt{(\hat \alpha_1 - \alpha_1)^2 + (\hat \mu - \mu)^2}\leq \max \left\{ \frac{4\sqrt{2}}{\kappa^2} \left( \frac{\norm{\eta}_{\infty}}{1-k} +  \kappa C_{\delta,\delta_0,s}\right), \delta + s \delta_0\right\},
\end{equation}where $\delta_0$ is the magnitude for one-step changes of dynamic background in \eqref{slowly_varying} and \begin{equation*}
    \kappa = \sqrt{\phi_{\min} (2)} -  \frac{k\sqrt{2(T-1)}}{(1-k)\sqrt{T}} ,\quad C_{\delta,\delta_0,s} = \frac{2 s \delta_0\sqrt{T-1}}{(1-k)\sqrt{T}} + \sqrt{\frac{(s-2)(T-s+1)}{T}} (s \delta_0+\delta). 
\end{equation*}Here, $\phi_{\min} (\cdot)$ is defined in \eqref{phi_minmax}. Moreover, we have $$\norm{\eta}_{\infty} \leq C_0(\log T)^{3/2}/ T^{1/2},$$ where $C_0 = C_0(A_1, A_2, A_3)$ is a positive constant.
\end{proposition}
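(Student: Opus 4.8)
\textbf{Proof proposal for Proposition~\ref{thm_upper_bound}.}

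The plan is to derive the bound in two pieces: first the deterministic variational-inequality argument that produces \eqref{upper_error_bound_para_of_interest}, and then the probabilistic control of $\norm{\eta}_\infty = \norm{(A[x_{1:T}]\beta - a[x_{1:T}])/T}_\infty$. For the deterministic part, I would start from the two variational inequalities \ref{VI_1} and \ref{VI_2}. Since $\hat\beta_T$ solves \ref{VI_1} and $\beta$ solves \ref{VI_2}, plugging $w=\beta$ into \ref{VI_1} and $w=\hat\beta_T$ into \ref{VI_2} and adding the two inequalities gives $\langle F_{x_{1:T}}(\hat\beta_T) - \tilde F_{x_{1:T}}(\beta),\, \beta - \hat\beta_T\rangle \geq 0$. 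Using $F_{x_{1:T}} - \tilde F_{x_{1:T}} \equiv \eta$ and $\tilde F_{x_{1:T}}(z) = A[x_{1:T}](z-\beta)/T$, this collapses to $(\hat\beta_T-\beta)^\T \XX^\T\XX (\hat\beta_T-\beta)/T \leq \langle \eta, \hat\beta_T - \beta\rangle \leq \norm{\eta}_\infty \norm{\hat\beta_T-\beta}_1$, i.e. a standard basic inequality $\norm{\XX(\hat\beta_T-\beta)}_2^2/T \leq \norm{\eta}_\infty \norm{\hat\beta_T-\beta}_1$.

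Next I would run the usual cone/restricted-eigenvalue argument, adapted to the fact that the $\ell_1$ constraint acts only on $\Delta$ (indices in $I_2\cup I_3$) and not on $(\alpha_1,\mu)$ (indices in $I_1$). Writing $h = \hat\beta_T - \beta$ and splitting $h_1 = h_{I_1}$, $h_2 = h_{I_2}$, $h_3 = h_{I_3}$: from the feasibility $\norm{\hat\Delta}_1 < \delta$ together with $\norm{\Delta}_1 = \norm{\Delta_{I_2}}_1$ one gets the cone condition $\norm{h_3}_1 \leq \norm{h_2}_1 + 2\norm{\Delta}_1 \le \norm{h_2}_1 + 2\delta$ (with a similar bookkeeping term from $\norm{\Delta}_1\le s\delta_0$), so $\norm{h}_1 \leq \norm{h_1}_1 + 2\norm{h_2}_1 + 2\delta + \cdots$; then use $\norm{h_1}_1\le\sqrt{2}\norm{h_1}_2$ and $\norm{h_2}_1\le\sqrt{s-2}\norm{h_2}_2$. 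The restricted eigenvalue $\phi_{\min}(2)$ in \eqref{phi_minmax} lower-bounds $\norm{\XX e}_2/(\sqrt T\norm e_2)$ over vectors supported on $I_1$ plus a few more coordinates; because the design is rank-deficient on the full space, I would isolate the $I_1$-block by the triangle inequality $\norm{\XX h}_2 \geq \norm{\XX h_1}_2 - \norm{\XX h_{I_1^{\mathsf c}}}_2$ and bound $\norm{\XX h_{I_1^{\mathsf c}}}_2$ by $\sqrt{(T-1)/T}\cdot(\text{something like }\norm{h_2}_2 + \ldots)$ using $\phi_{\max}$-type estimates on $L$. This is exactly where the constants $\kappa = \sqrt{\phi_{\min}(2)} - k\sqrt{2(T-1)}/((1-k)\sqrt T)$ and $C_{\delta,\delta_0,s}$ arise: $k\in(0,1)$ is the slack parameter balancing the contribution of $\norm{h_2}_1$ in the cone inequality against the basic inequality, tuned so that $k\lambda = O((\log T)^{3/2}/T^{1/2})$ matches the noise level $\norm{\eta}_\infty$. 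Chaining the basic inequality $\norm{\XX h}_2^2/T \le \norm{\eta}_\infty\norm h_1$ with these bounds and solving the resulting quadratic in $\norm{h_1}_2 = \sqrt{(\hat\alpha_1-\alpha_1)^2+(\hat\mu-\mu)^2}$ yields the $4\sqrt2/\kappa^2$ factor; the $\max\{\cdot,\delta+s\delta_0\}$ covers the degenerate case where the cone argument is vacuous (when $\norm{h}_2$ is already dominated by the $\delta + s\delta_0$ feasibility-gap terms).

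For the probabilistic part I would bound $\norm{\eta}_\infty$ coordinatewise. Writing out $\eta = (\XX^\T\XX\beta - \XX^\T y)/T$ where $y$ has entries $\sum x_i x_{i-1}$ and $x_{1:T}$, and using $x_{1:T} = \XX\beta + \varepsilon_{1:T}$, each coordinate of $\eta$ is (up to scaling) $\tfrac1T$ times an inner product of a column of $\XX$ with $\varepsilon_{1:T}$, i.e. sums like $\tfrac1T\sum_{i} x_{i-1}\varepsilon_i$ and $\tfrac1T\sum_{i\ge k}\varepsilon_i$. The pure-noise partial sums $\sum_{i\ge k}\varepsilon_i$ are sub-Gaussian and handled by a Hoeffding bound plus a union over $T$ coordinates, giving an $O(\sqrt{\log T/T})$ term. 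The term $\tfrac1T\sum x_{i-1}\varepsilon_i$ is the delicate one: $x_{i-1}$ depends on $\varepsilon_1,\dots,\varepsilon_{i-1}$ and on the (deterministic but growing) background, so $x_{i-1}$ is sub-Gaussian but with a proxy variance that can grow — I would unroll the AR(1) recursion $x_{i-1} = \alpha_1^{i-1}x_0 + \sum_{j<i}\alpha_1^{i-1-j}(f_j+\varepsilon_j)$, bound $\max_i |x_{i-1}|$ by $O(\log T)$-type sub-Gaussian maximal inequalities (the $(\log T)^{3/2}$ exponent comes from one $\sqrt{\log T}$ for the maximum of the $x$'s, another $\sqrt{\log T}$ from a Bernstein/Hoeffding bound on the martingale $\sum x_{i-1}\varepsilon_i$ conditional on that event, and a third $\sqrt{\log T}$ from the union over coordinates), and combine via a union bound whose three failure probabilities are exactly $(2T)^{1-A_1}$, $(2T)^{1-A_2^2/A_1}$, $2(2T)^{-A_3^2/A_1^2}$. \textbf{The main obstacle} is this last step: controlling $\tfrac1T\sum_i x_{i-1}\varepsilon_i$ uniformly requires a careful truncation/conditioning argument because $x_{i-1}$ is neither bounded nor independent of the future noise, and the non-stationary background $f_i$ enters the variance proxy; getting the clean $O((\log T)^{3/2}/\sqrt T)$ rate with the stated three-term probability bound is the crux, and is presumably where Propositions and lemmas in Appendix~\ref{appendix:proof} (sub-Gaussian concentration for non-stationary AR processes, and the restricted-eigenvalue verification for $\XX$) do the heavy lifting.
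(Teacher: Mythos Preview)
Your overall architecture matches the paper's: a basic inequality from the variational inequality, a restricted-error (cone) condition on $h=\hat\beta_T-\beta$, a restricted-eigenvalue lower bound on $\|\XX h\|_2/\sqrt T$, and sub-Gaussian/Azuma concentration for $\|\eta\|_\infty$. There is, however, one substantive technical misstep.

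You derive the cone condition from the \emph{feasibility} $\|\hat\Delta\|_1<\delta$. The paper instead passes to the equivalent \emph{penalized} problem \eqref{l1_penal} and uses its optimality at $\hat\beta_T$ versus $\beta$: combining $\tfrac{1}{2T}\|x_{1:T}-\XX\hat\beta_T\|_2^2+\lambda\|\hat\Delta\|_1\le\tfrac{1}{2T}\|x_{1:T}-\XX\beta\|_2^2+\lambda\|\Delta\|_1$ with the event $\|\XX^{\T}\varepsilon_{1:T}/T\|_\infty\le k\lambda$ yields (Lemma~\ref{restricted_error})
\[
\|h_{I_3}\|_1\;\le\;\min\Bigl\{\tfrac{1+k}{1-k}\|h_{I_2}\|_1,\;\tfrac{2s\delta_0}{1-k}\Bigr\}+\tfrac{k}{1-k}\|h_{I_1}\|_1.
\]
The $\tfrac{k}{1-k}\|h_{I_1}\|_1$ term---absent from any feasibility-only bound---is precisely what, after $\|h_{I_1}\|_1\le\sqrt2\,\|h_{I_1}\|_2$ and $\phi_{\max}(1)=(T-1)/T$, produces the subtracted $\tfrac{k\sqrt{2(T-1)}}{(1-k)\sqrt T}$ in $\kappa$. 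Your feasibility route would not reproduce the stated $\kappa$. Relatedly, the $\max\{\cdot,\delta+s\delta_0\}$ comes from an explicit dichotomy rather than a ``degenerate case'': case (i) $\|h_{I_1}\|_1\le\|h_{I_2}\|_1$ gives $\|h_{I_1}\|_2\le\|h_{I_2}\|_1\le\|\hat\Delta_{I_2}\|_1+\|\Delta\|_1\le\delta+s\delta_0$ directly; case (ii) $\|h_{I_1}\|_1>\|h_{I_2}\|_1$ gives $\|h_{J_0}\|_1<2\|h_{I_1}\|_1$, which lets one divide $\|h_{J_0}\|_1$ out of the chained inequality (no quadratic is solved).

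On the probabilistic side your plan is correct, with two refinements. First, the paper additionally assumes $|f_i|\le C_1\sqrt{\log T}$ (condition~\eqref{boundedbackground}), which you omit; together with the conditioning event $\cA_1=\{|\varepsilon_i|\le\sqrt{2A_1\sigma_0^2\log(2T)}\ \forall i\}$ this makes $|x_{i-1}|=O(\sqrt{\log T})$ deterministically via the AR recursion, so $\{x_{i-1}\varepsilon_i\}$ is a \emph{bounded} martingale-difference sequence with increments $O(\log T)$. A single Azuma--Hoeffding bound then gives the $(\log T)^{3/2}/\sqrt T$ rate: one factor of $\log T$ from the increment bound $c_3\propto c_2^2$ and one $\sqrt{\log T}$ from Azuma---two pieces, not three.
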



Next, we establish the lower bound of the $\ell_2$ estimation error using Fano's method.
\begin{proposition}[Lower bound on $\ell_2$ estimation error]\label{thm_lower_bound}For any estimator $\Tilde{\beta}_T$ and constant $ C_2>0$, we have \begin{equation*}
\begin{split}
    \sup_{\beta \in \Theta_T}  \mathrm{pr}\left(\norm{\Tilde{\beta}_T-\beta}_2 \geq C_2 \right) \geq  1 - \frac{C_3 T + C_4\delta_0(T)\sum_{t=2}^T s(t)+ C_5\delta_0^2(T)\sum_{t=2}^T s^2(t) + \log 2}{s(T) \log(1/2C_2)},
\end{split}
\end{equation*}where $C_3$, $C_4$ and $C_5$ are positive constants only dependent on $\delta_s$.
\end{proposition}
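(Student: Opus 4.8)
\textit{Proof proposal.} The plan is to establish Proposition~\ref{thm_lower_bound} by the standard reduction from estimation to multiple testing, controlled through Fano's inequality. Since the noise in \eqref{DGP} is only required to be sub-Gaussian with variance $\sigma_0^2$, the Gaussian law $\varepsilon_i\sim N(0,\sigma_0^2)$ is a legitimate instance of the model, so it suffices to lower bound the minimax error over this Gaussian sub-family, where the relevant Kullback--Leibler divergences are explicit. I would fix a finite packing $\{\beta^{(0)},\dots,\beta^{(N)}\}\subseteq\Theta_T$ with $\|\beta^{(j)}-\beta^{(k)}\|_2\ge 2C_2$ for all $j\ne k$, write $P_j$ for the law of $x_{1:T}$ under $\beta^{(j)}$ (Gaussian noise, known history $x_0$), and note that any estimator lands within $C_2$ of at most one $\beta^{(j)}$; the testing reduction and Fano's inequality then give
\[
\sup_{\beta\in\Theta_T}\mathrm{pr}\!\left(\|\Tilde\beta_T-\beta\|_2\ge C_2\right)\;\ge\;\max_{0\le j\le N}P_j\!\left(\|\Tilde\beta_T-\beta^{(j)}\|_2\ge C_2\right)\;\ge\;1-\frac{\max_{j,k}\mathrm{KL}(P_j\|P_k)+\log 2}{\log(N+1)}.
\]
It then remains to (i) build the packing so that $\log(N+1)\ge s(T)\log\!\big(1/(2C_2)\big)$, and (ii) bound $\max_{j,k}\mathrm{KL}(P_j\|P_k)$ by $C_3T+C_4\delta_0(T)\sum_{t=2}^T s(t)+C_5\delta_0(T)^2\sum_{t=2}^T s^2(t)$ with $C_3,C_4,C_5$ depending only on $\delta_s$; substituting both finishes the argument. (If $C_2\ge 1/2$ the asserted bound is vacuous, so we may take $C_2$ small, which is also what makes the packing in (i) exist.)

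For (i), I would fix a support $S\subseteq\{2,\dots,T\}$ with $|S|=s(T)-2$, chosen consistently with the assumed change schedule so that $|S\cap\{2,\dots,t\}|\le s(t)$ for all $t$ and the constructed vectors stay in $\Theta_T$, and let every $\beta^{(j)}$ have its $\Delta$-part supported on $S$. The $s(T)$ active coordinates of $\beta$ --- namely $\alpha_1,\mu$ and the $s(T)-2$ entries of $\Delta$ on $S$ --- are then placed on a grid of resolution $2C_2$ inside their feasible ranges (a ball of radius $\delta_s$ for $(\alpha_1,\mu)$, a fixed interval such as $[\delta_0/2,\delta_0]$ for each $\Delta$-coordinate, so that $\|\Delta^{(j)}\|_0=s(T)-2$ and $\|\Delta^{(j)}\|_\infty\le\delta_0$ hold). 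Two distinct such vectors differ in at least one coordinate by at least $2C_2$ and are therefore $2C_2$-separated, and a routine volume/packing-number estimate for a full-dimensional product set shows $N+1$ can be taken of order $(c/C_2)^{s(T)}$ --- the $s(T)$-fold product structure is precisely what produces the $s(T)$ exponent --- which exceeds $(1/(2C_2))^{s(T)}$ once $C_2$ is small relative to $\delta_s$ and $\delta_0$, giving $\log(N+1)\ge s(T)\log(1/(2C_2))$.

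For (ii), I would use the Gaussian autoregressive chain rule: conditioning on the past, $x_i\mid x_{1:i-1}\sim N\big(f_i+\alpha_1 x_{i-1},\sigma_0^2\big)$ under $\beta$, so
\[
\mathrm{KL}(P_\beta\|P_{\beta'})=\frac{1}{2\sigma_0^2}\sum_{i=1}^T\mathbb{E}_\beta\Big[\big((f_i-f_i')+(\alpha_1-\alpha_1')x_{i-1}\big)^2\Big].
\]
On the packing $|f_i-f_i'|\le|\mu-\mu'|+\sum_{j=2}^i|\Delta_j-\Delta_j'|\le 2\delta_s+2\delta_0 s(i)$ (the $\Delta$-support is fixed and schedule-consistent) and $|\alpha_1-\alpha_1'|\le 2\delta_s$, so it remains to control $\mathbb{E}_\beta[x_{i-1}^2]$. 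Unrolling the recursion, $x_k=\alpha_1^k x_0+\sum_{l=1}^k\alpha_1^{k-l}(f_l+\varepsilon_l)$, and using the stability $|\alpha_1|\le\delta_s<1$, monotonicity of $s(\cdot)$, and $|f_l|\le\delta_s+\delta_0 s(l)\le\delta_s+\delta_0 s(k)$, the variance part of $x_k$ is at most $C(\delta_s)$ and its squared mean is at most $C(\delta_s)\big(1+\delta_0 s(k)+\delta_0^2 s^2(k)\big)$, the linear-in-$\delta_0$ term arising from the cross term in the squared partial sum. Summing these pointwise bounds over $i=1,\dots,T$ yields exactly $C_3T+C_4\delta_0(T)\sum_{t}s(t)+C_5\delta_0(T)^2\sum_{t}s^2(t)$, with $C_3,C_4,C_5$ absorbing $\sigma_0$, $x_0$ and powers of $(1-\delta_s)^{-1}$, uniformly over the packing, which bounds $\max_{j,k}\mathrm{KL}(P_j\|P_k)$ as required.

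The main obstacle is step (ii) --- controlling the second moments $\mathbb{E}_\beta[x_{i-1}^2]$ of the \emph{non-stationary} AR(1) process, i.e.\ tracking how the piecewise-constant background (up to $s(t)$ jumps of size at most $\delta_0$ by time $t$) propagates through the recursion, with enough precision to recover the exact split into a linear and a quadratic term in $\delta_0$; a too-crude splitting of the squared partial sum $\big(\sum_l\alpha_1^{k-l}f_l\big)^2$ would merge the linear contribution into the quadratic one and destroy the stated three-term form. Coupled to this is a tension in step (i): a larger jump budget $\delta_0$ eases the packing construction but inflates the divergences, so $S$ and the grid must be chosen to reconcile separation, feasibility in $\Theta_T$, and the cardinality bound simultaneously.
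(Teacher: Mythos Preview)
Your proposal is correct and shares the paper's overall architecture: reduce to multiple testing over a $2C_2$-packing of $\Theta_T$, invoke Fano's inequality, and bound the maximal KL divergence over the packing by the stated three-term expression. The packing step is handled the same way in both (a volume/packing-number count on the $s(T)$ active coordinates, giving $\log N\gtrsim s(T)\log(1/2C_2)$). Where you genuinely diverge from the paper is in step~(ii): the paper does \emph{not} use the Markov chain rule for KL. Instead it writes the full vector $x_{1:T}$ as a multivariate Gaussian $N(\tau,\Sigma)$ with $\Sigma=P_\alpha P_\alpha^\T$ for an explicit lower-triangular Toeplitz $P_\alpha$, observes $\det\Sigma=1$, computes $\Sigma^{-1}$ in closed tridiagonal form, and then bounds the two nontrivial pieces of the Gaussian KL formula, $\operatorname{tr}(\Sigma_2^{-1}\Sigma_1)-T$ and $(\tau_2-\tau_1)^\T\Sigma_2^{-1}(\tau_2-\tau_1)$, entrywise. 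Your sequential route is more elementary --- no matrix inverses or trace identities --- and makes transparent exactly where the $\delta_0 s(t)$ and $\delta_0^2 s^2(t)$ contributions arise (from the mean of $x_{t}$ via the unrolled recursion), which is the point you correctly flag as the crux. The paper's route, on the other hand, isolates the $C_3T$ term cleanly as the trace contribution and the $\delta_0$-dependent terms as the quadratic-form contribution, and it leverages the exact AR(1) inverse covariance rather than moment bounds; this is what would generalize more mechanically to AR($p$), as the paper later remarks. Either computation delivers the same bound with constants depending only on $\delta_s$.
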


We can show that Theorem~\ref{thm_main_lower} follows from the above proposition. The key steps in proving this above proposition are to (i) find a large enough $\varepsilon$-packing of $\Theta_T$ and (ii) upper bound the 
Kullback–Leibler (KL) divergence over this packing. 

\section{Extension to $\text{\textsc{ar}$(p)$}$ sequences}\label{sec:extend}

So far we have been focused on analysis for \textsc{ar}$(1)$ sequences; now we discuss how to extend to general cases. For the \textsc{ar}$(p)$ case, we need to change several terms in \eqref{DGP2} (defined by \textsc{ar}$(1)$): the design matrix becomes $\XX = ( x_{0:T-1}, \dots,  x_{-p+1:T-p} , L) \in \RR^{T \times (T+p)},$ where $L \in \mathbb R^{T \times T}$ remains the lower triangular matrix of ones;
the coefficient vector becomes $\beta = ( \alpha_{1:p}^\T,\mu,\Delta_{2},\dots,\Delta_{T})^\T,$ where $ \alpha_{1:p} = (\alpha_1,\dots,\alpha_p)^\T$. We can solve a similar convex optimization problem as that defined in \eqref{l1_constraint} to estimate the parameters, except that the hypothesis class $\cX$ is defined differently $\cX = \{\beta: ( \alpha_{1:p}^\T,\mu) \in \mathcal{S}_p, \  \norm{ \Delta}_1 < \delta\},$ where $\mathcal{S}_p = \{( \alpha_{1:p}^\T,\mu): \norm{ \alpha_{1:p}}_2^2 + \mu^2 \leq \delta_s^{p+1} \}.$ Moreover, we will redefine $I_1 = \{1,\dots,p+1\}$, while the definitions for $I_2$ and $I_3$ remain the same as defined in Section \ref{Proof_outline}. The restricted eigenvalues are also defined as \eqref{phi_minmax}, except that the error $e$ are restricted to be in $R_1 = \{e: p+1 = \norm{e_{I_1}}_0 \leq \norm{e}_0 \leq u \}$ when calculating $\phi_{\min} (u)$. With these definitions, we can show the following upper bound for the $\ell_2$ recovery error:
\begin{theorem}[Upper Bound on $\ell_2$ estimation error for  \textsc{ar}$(p)$ case] \label{thm_upper_bound_arp} For $\hat \beta_T$ defined by \eqref{l1_constraint} and for all $ A_1 >1$, $A_2 > \sqrt{A_1}$ and $A_3 > 0$, for any selected tuning parameter $\delta$, with probability at least $1- (2T)^{1-A_1} -(2T)^{1-A_2^2/A_1}- 2p(2T)^{-A_3^2/A_1^2}$, we have\begin{equation}\label{upper_error_bound_arp}
    \norm{\hat \beta_T - \beta}_2 \leq \min \left\{\Tilde{C}_3 \sqrt{s} \max \left\{s \delta_0 , \delta\right\},2\sqrt{{\Gamma\left(\frac{p+3}{2}\right)\operatorname{vol}(\mathcal{S}_p)}/{\pi^{\frac{p+1}{2}}}} \right\} + \delta + \sqrt{s}\delta_0,
\end{equation}where $\Tilde{C}_3$ is a positive constant dependent on $A_1, A_2$ and $A_3$ and $\Gamma(\cdot)$ is the gamma function.
\end{theorem}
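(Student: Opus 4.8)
The plan is to follow the proof of Theorem~\ref{thm_main_upper} essentially line by line, changing only what the enlarged ``dense'' block $I_1=\{1,\dots,p+1\}$ forces us to change. First I would split $\hat\beta_T-\beta$ along the partition $I_1,\,I_1^{\mathsf c}$ used in Section~\ref{sec:extend}, so that $\norm{\hat\beta_T-\beta}_2\le\norm{(\hat\alpha_{1:p},\hat\mu)-(\alpha_{1:p},\mu)}_2+\norm{\hat\Delta-\Delta}_2$. The $\Delta$-block is handled for free: feasibility of $\hat\beta_T$ in $\cX$ gives $\norm{\hat\Delta}_2\le\norm{\hat\Delta}_1<\delta$, while $\Delta\in\mathcal B$ gives $\norm{\Delta}_2\le\sqrt{s-2}\,\delta_0\le\sqrt s\,\delta_0$, whence $\norm{\hat\Delta-\Delta}_2<\delta+\sqrt s\,\delta_0$ — exactly the additive tail of \eqref{upper_error_bound_arp}. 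It then remains to dominate $\norm{(\hat\alpha_{1:p},\hat\mu)-(\alpha_{1:p},\mu)}_2$ by the $\min\{\cdot,\cdot\}$ term, which I would do by establishing the two competing bounds separately and taking whichever is smaller.

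For the second argument of the minimum, observe that both $\hat\beta_T$ and $\beta$ lie in $\cX$, so their $I_1$-blocks lie in the Euclidean ball $\mathcal S_p\subset\RR^{p+1}$ of squared radius $\delta_s^{p+1}$; hence their distance is at most $\mathrm{diam}(\mathcal S_p)$, which the volume formula for a ball in $\RR^{p+1}$ turns into the bound $2\sqrt{\Gamma(\tfrac{p+3}{2})\operatorname{vol}(\mathcal S_p)/\pi^{(p+1)/2}}$ (reducing to $2\sqrt{\operatorname{vol}(\mathcal S)/\pi}$ when $p=1$).

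For the first argument I would rerun the restricted-eigenvalue / variational-inequality argument behind Proposition~\ref{thm_upper_bound} in the AR$(p)$ geometry. With the AR$(p)$ design $\XX=(x_{0:T-1},\dots,x_{-p+1:T-p},L)$, the gradient field $F_{x_{1:T}}(z)=(\XX^\T\XX z-\XX^\T x_{1:T})/T$ is still affine and monotone, $\hat\beta_T$ solves VI$[F_{x_{1:T}},\cX]$, $\beta$ solves VI$[\tilde F_{x_{1:T}},\cX]$ with $\tilde F_{x_{1:T}}(z)=\XX^\T\XX(z-\beta)/T$, and the two fields differ only by the constant $\eta=F_{x_{1:T}}(\beta)=-\XX^\T\varepsilon_{1:T}/T$, so $\norm{\eta}_\infty=\norm{\XX^\T\varepsilon_{1:T}}_\infty/T$. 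Instantiating the two variational inequalities at $w=\hat\beta_T$ and $w=\beta$ and pushing the same chain of inequalities as in Proposition~\ref{thm_upper_bound}, now with $I_1=\{1,\dots,p+1\}$, with $\phi_{\min}(p+1)$ in place of $\phi_{\min}(2)$, and with $\kappa$, $C_{\delta,\delta_0,s}$ redefined accordingly, delivers on a good event $\norm{(\hat\alpha_{1:p},\hat\mu)-(\alpha_{1:p},\mu)}_2\le\tilde C_3\sqrt s\,\max\{s\delta_0,\delta\}$ for a constant $\tilde C_3=\tilde C_3(A_1,A_2,A_3)$. Two probabilistic inputs make this event large: (i) $\phi_{\min}(p+1)$ is bounded below by a positive constant with high probability — the restricted-eigenvalue condition for the AR$(p)$ Gram matrix, which now contains $p$ mutually correlated lagged-regressor columns together with the square block $L$ (which already makes $\XX^\T\XX$ rank-deficient); (ii) $\norm{\eta}_\infty\le C_0(\log T)^{3/2}/\sqrt T$ — the $T$ coordinates of $\XX^\T\varepsilon_{1:T}/T$ coming from $L$ are tail sums $\sum_{i\ge k}\varepsilon_i/T$, bounded at rate $O(1/\sqrt T)$ by a sub-Gaussian maximal inequality over $T$ terms, while the $p$ coordinates $\sum_i x_{i-j}\varepsilon_i/T$, $j=1,\dots,p$, are bounded by first controlling $\max_i|x_i|$ for the non-stationary AR$(p)$ recursion and then a conditional sub-Gaussian bound; the union bound over these $p$ extra coordinates is exactly what turns the AR$(1)$ failure probability into $2p(2T)^{-A_3^2/A_1^2}$. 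Collecting the $\Delta$-term, the two bounds on the $I_1$-block, and a union bound over the three good events yields \eqref{upper_error_bound_arp}.

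I expect the real work to be step (i): keeping $\phi_{\min}(p+1)$ bounded away from zero. The AR$(1)$ verification leaned on the explicit ``two columns plus $L$'' structure; for AR$(p)$ one must show that no vector supported on all of $I_1$ plus a handful of $L$-coordinates is nearly annihilated by $\XX$ — a restricted-isometry-type statement for a Gram matrix whose lagged-regressor sub-block is simultaneously random, strongly correlated across the $p$ lags, and non-stationary. A secondary difficulty, also needed for the $(\log T)^{3/2}$ rate in (ii), is bounding $\max_i|x_i|$: under only the ball constraint on $\alpha_{1:p}$ (with no explicit causality assumption), the AR$(p)$ recursion can amplify the driving noise, so one must argue the accumulated growth is still at most polylogarithmic in $T$.
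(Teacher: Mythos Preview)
Your proposal is correct and matches the paper's approach exactly: the paper does not give an explicit proof of Theorem~\ref{thm_upper_bound_arp} but simply indicates that it follows the AR$(1)$ argument (Theorem~\ref{thm_main_upper} via Proposition~\ref{thm_upper_bound}) with $I_1$ enlarged to $\{1,\dots,p+1\}$, $\phi_{\min}(p+1)$ in place of $\phi_{\min}(2)$, the $(p{+}1)$-dimensional ball-volume formula supplying the second argument of the minimum, and a union bound over the $p$ cross-terms $\sum_i x_{i-j}\varepsilon_i$ producing the extra factor $p$ in the failure probability. The two difficulties you flag --- a high-probability positive lower bound on $\phi_{\min}(p+1)$ and a polylogarithmic bound on $\max_i|x_i|$ for the non-stationary AR$(p)$ recursion without an explicit stability hypothesis --- are exactly the points the paper leaves implicit; they are genuine technical debts of the paper rather than defects in your plan.
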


Since the expression (\ref{thm_upper_bound_arp}) for the upper bound for  \textsc{ar}$(p)$ case is similar to that in Theorem \ref{thm_main_upper}, the discussion on $\varepsilon$-recoverable region, which is solely determined by the upper bound of estimation error, will be similar too. For lower bounding the estimation error via Fano's method, we can use similar proof strategy as that in Proposition~\ref{thm_lower_bound} or  Lemma~\ref{KL_upper_bound} (although the details are more tedious to specify): (i) express $x_t$ with respect to $\beta, \varepsilon_{1:t}$; (ii) derive the joint distribution of $ x_{1:T}$ based on that expression and (iii) bound the KL divergence. 

\section{Numerical experiments}\label{sec:numerical}

In this section, we perform comprehensive numerical simulations to (i) show that our proposed method works well in practice; (ii) validate our theoretical findings regarding algorithm performance; (iii) compare with existing methods; (iv) demonstrate the good performance of the two proposed bootstrap methods. Recall that our work's primary focus is to estimate the autoregressive coefficients. Thus, we will focus on this in the following four experiments.


{\it Experiment 1.} First, we show that our proposed estimation method can accurately recover $\alpha_1$ from non-stationary  \textsc{ar}$(1)$ time series under various settings: $\alpha_1  \in \{0.05, 0.1\}$, $\sigma_0^2 \in \{0.1, 0.2\}$ $\delta_0 \in \{0.05, 0.1\}$ and $T = 5000$. The dynamic background is generated by $f_i = \sum_{k=1}^i \delta_0(U_k-0.5), i = 1,\dots,T$, where $U_k \in [0, 1], k = 1,\dots,T$ is a sequence of i.i.d. uniform random numbers. As discussed above, the accuracy depends on both $s$ and $\delta_0$. Here, we consider an extreme case: $s = T + 1$ (which is supposed to be the most challenging case). Moreover, we also present the results with $\delta$ selected by Durbin-Watson test \citep{durbin1992testing} as an alternative. (Details on the Durbin-Watson test can be found in Section~\ref{tests} in the Appendix~\ref{appendix:background}.) The convex program \eqref{cvx_opt_1} is solved by the $\texttt{cvx}$ package \citep{cvx} and we tune the hyperparameter $\delta$ by Ljung-Box test and Durbin-Watson test, respectively. We repeat the experiment 20 times for each setting, and plot the mean square error of $\hat \alpha_1$, $p$-value of Ljung-Box test and Durbin-Watson test with different $\delta$'s in Figure~\ref{exp2}.

\begin{figure}[htp]
\centering
\includegraphics[width=1\linewidth]{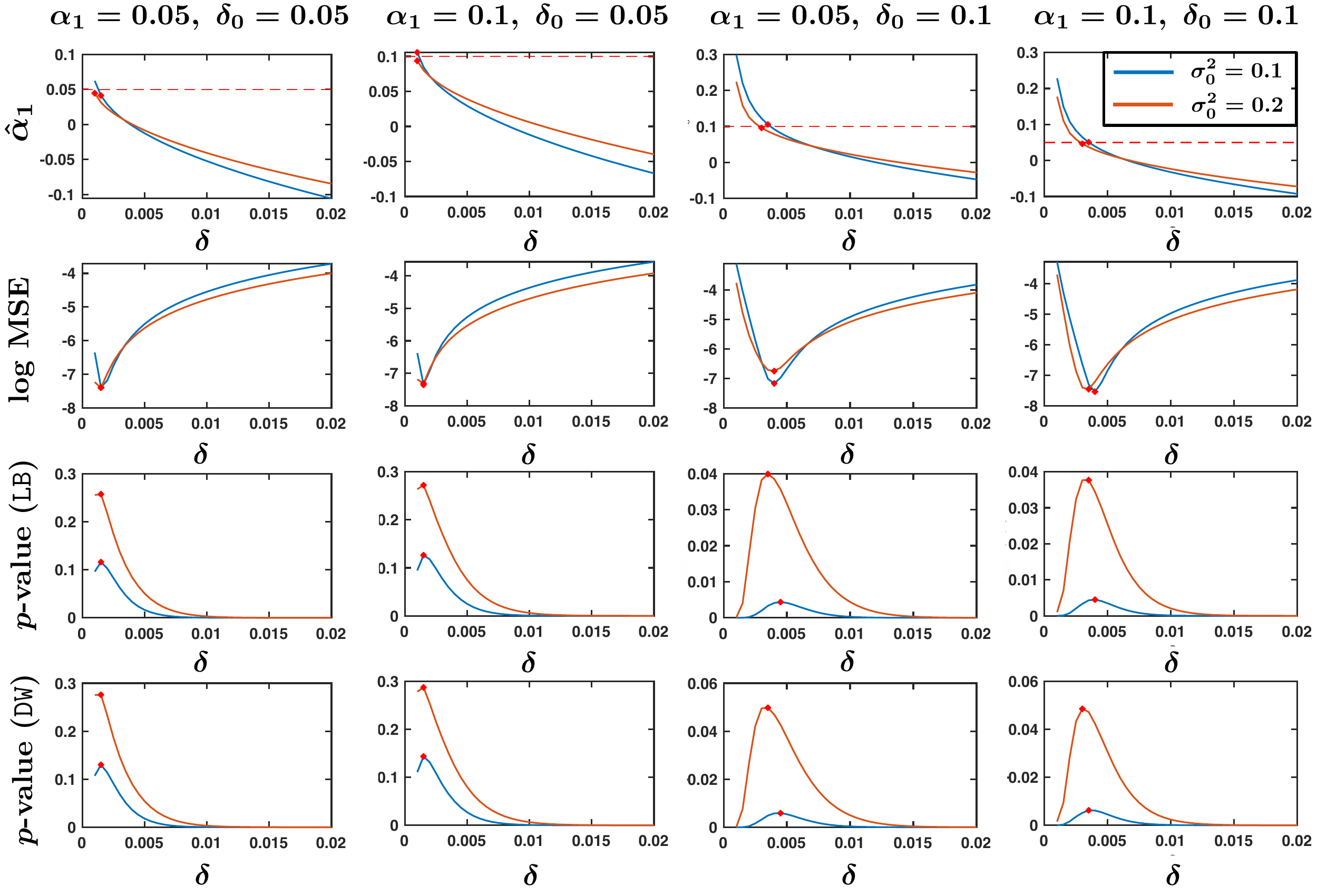}
    \caption{Performance of our proposed method when $\delta$ increases. The experimental setting is on the top of each column. The red dashed line denotes the ground truth $\alpha_1 = 0.1$. Note that the $\delta$ selected by our proposed tuning procedure (which leads to the maximum $p$-value of Ljung-Box or Durbin-Watson test) gives the best estimate $\hat \alpha_1$.}\label{exp2}
\end{figure}

The results in Figure~\ref{exp2} show that $\hat \alpha_1$ decreases when $\delta$ increases, and
there is a specific value of $\delta$ that leads to the smallest mean square error for estimating $\alpha_1$. In the figure, the red dots in the first two rows correspond to the best achievable $\delta$'s in mean square error. In the last two rows, those red dots indicate the $\delta$'s selected by our proposed tuning procedure. Thus, we can observe that (i) the best achievable $\delta$'s regarding the accuracy and mean square error are roughly the same; (ii) our proposed tuning procedure based on both the Ljung-Box test and Durbin-Watson test can select the best $\delta$. 

\begin{table}[H]
\caption{Summary of the information of red dots in Figure~\ref{exp2}.}\label{table_alpha}
\begin{center}
\begin{small}
\begin{sc}
\resizebox{1\textwidth}{!}{%
\begin{tabular}{lcccccccccccccr}
\toprule[1pt]\midrule[0.3pt]
& \multicolumn{3}{c}{\textbf{Average (standard deviation)}}& \multicolumn{3}{c}{\textbf{mean square error}} \\ 
Setting $(\alpha_1,\delta_0,\sigma_0^2)$ & $\varepsilon$-optimal  & Ljung-Box & Durbin-Watson & $\varepsilon$-optimal  & Ljung-Box & Durbin-Watson \\ 
\cmidrule(l){2-4}
\cmidrule(l){5-7}
(0.05, 0.05, 0.10)&4.13$\times 10^{-2}$(2.33$\times 10^{-2}$)&4.13$\times 10^{-2}$(2.33$\times 10^{-2}$)&4.13$\times 10^{-2}$(2.33$\times 10^{-2}$)&6.19$\times 10^{-4}$&6.19$\times 10^{-4}$&6.19$\times 10^{-4}$\\ 
(0.05, 0.05, 0.20)&3.12$\times 10^{-2}$(1.60$\times 10^{-2}$)&3.12$\times 10^{-2}$(1.60$\times 10^{-2}$)&3.12$\times 10^{-2}$(1.60$\times 10^{-2}$)&6.09$\times 10^{-4}$&6.09$\times 10^{-4}$&6.09$\times 10^{-4}$\\ 
(0.05, 0.10, 0.10)&3.91$\times 10^{-2}$(2.03$\times 10^{-2}$)&3.91$\times 10^{-2}$(2.03$\times 10^{-2}$)&5.04$\times 10^{-2}$(2.60$\times 10^{-2}$)&5.33$\times 10^{-4}$&5.33$\times 10^{-4}$&6.77$\times 10^{-4}$\\ 
(0.05, 0.10, 0.20)&3.69$\times 10^{-2}$(2.02$\times 10^{-2}$)&3.69$\times 10^{-2}$(2.02$\times 10^{-2}$)&4.64$\times 10^{-2}$(2.44$\times 10^{-2}$)&5.81$\times 10^{-4}$&5.81$\times 10^{-4}$&6.09$\times 10^{-4}$\\ 
(0.10, 0.05, 0.10)&8.47$\times 10^{-2}$(2.02$\times 10^{-2}$)&8.47$\times 10^{-2}$(2.02$\times 10^{-2}$)&8.47$\times 10^{-2}$(2.02$\times 10^{-2}$)&6.42$\times 10^{-4}$&6.42$\times 10^{-4}$&6.42$\times 10^{-4}$\\ 
(0.10, 0.05, 0.20)&8.01$\times 10^{-2}$(1.65$\times 10^{-2}$)&8.01$\times 10^{-2}$(1.65$\times 10^{-2}$)&8.01$\times 10^{-2}$(1.65$\times 10^{-2}$)&6.68$\times 10^{-4}$&6.68$\times 10^{-4}$&6.68$\times 10^{-4}$\\ 
(0.10, 0.10, 0.10)&9.21$\times 10^{-2}$(2.66$\times 10^{-2}$)&8.14$\times 10^{-2}$(2.41$\times 10^{-2}$)&8.14$\times 10^{-2}$(2.41$\times 10^{-2}$)&7.68$\times 10^{-4}$&9.30$\times 10^{-4}$&9.30$\times 10^{-4}$\\ 
(0.10, 0.10, 0.20)&7.83$\times 10^{-2}$(2.64$\times 10^{-2}$)&8.64$\times 10^{-2}$(3.21$\times 10^{-2}$)&8.64$\times 10^{-2}$(3.21$\times 10^{-2}$)&1.17$\times 10^{-3}$&1.21$\times 10^{-3}$&1.21$\times 10^{-3}$\\ 
\midrule[0.3pt]\bottomrule[1pt]
\end{tabular}
}
\end{sc}
\end{small}
\end{center}
\end{table}

Table~\ref{table_alpha} summarizes the optimal and the selected $\delta$'s (corresponding to the red dots) in Figure~\ref{exp2}: (i) the average and the standard deviation of $\hat \alpha_1$ obtained by $\varepsilon$-optimal (in the sense of accuracy) $\delta$, $\delta$ selected by Ljung-Box test and Durbin-Watson test and (ii) mean square error of $\hat \alpha_1$ obtained by $\varepsilon$-optimal (in the sense of mean square error) $\delta$, $\delta$ selected by Ljung-Box test and Durbin-Watson test.

{\it Experiment 2.}  Next, we validate our theoretical findings for \textsc{ar}$(1)$ case. The dynamic background is generated in the same way as the previous example. Besides, Figure~\ref{exp2} shows that the $p$-value with respect to $\delta$ is unimodal, which enables us to use the Golden-section search (tolerance $\varepsilon = 0.04$) to tune $\delta$ efficiently. Details on the Golden-section search and this modified tuning procedure can be found in Appendix~\ref{golden}. We also show how the estimate $\hat \alpha_1$ behaves with changing $s$, by setting $\alpha_1 = 0.1, \sigma_0^2 = 0.1$, $\delta_0 = 0.1$ and repeating the same estimation procedure 20 times for each $s \in \{5,35,\dots,3005\}$. The mean and standard deviation of $\hat \alpha_1$ over 20 trials with respect to $s$ in an errorbar plot are plotted Figure~\ref{change}.

\begin{figure}[htp]
\centering
\subfigure{\includegraphics[width=0.6\linewidth]{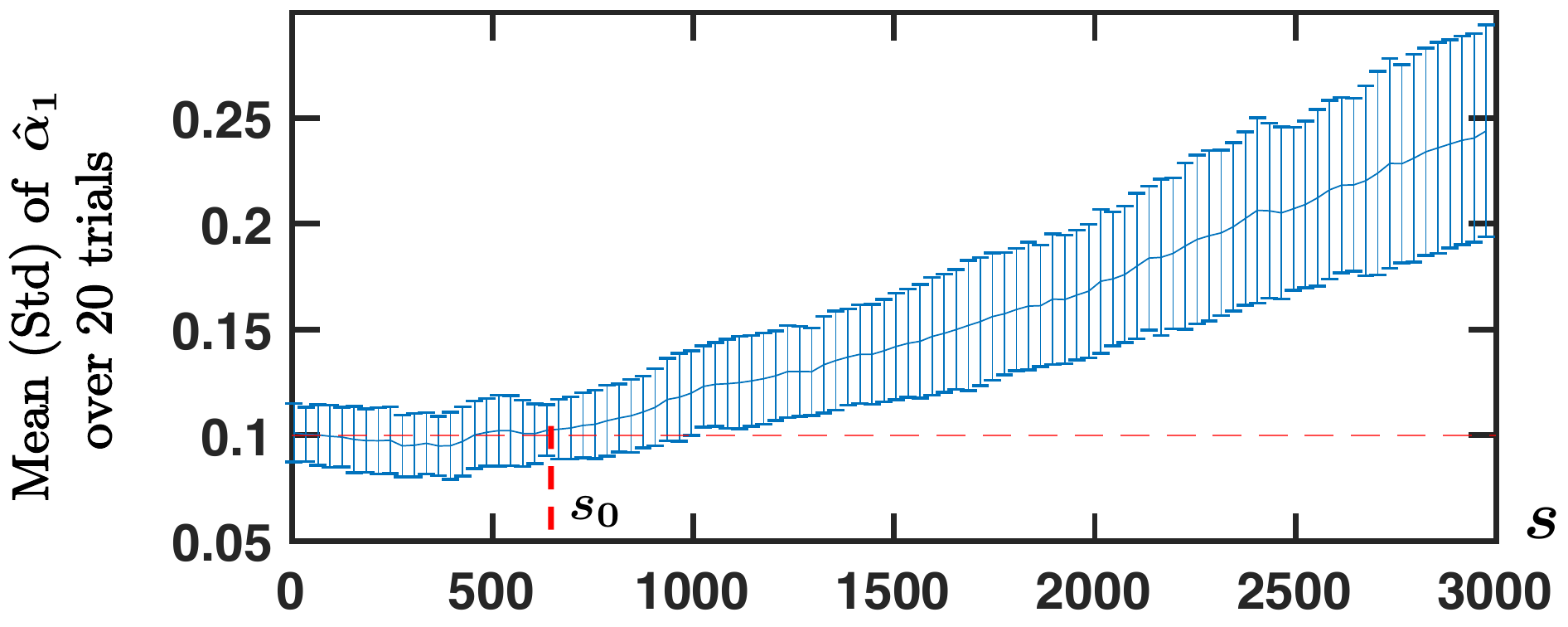}}
\caption{Algorithmic behavior with respect to $s$. The red dashed horizontal line is the ground truth $\alpha_1 = 0.1$. The estimate starts to deteriorate when $s$ exceeds $s_0 \approx 650$ and becomes worse with larger $s$.}\label{change}    
\end{figure}

The results in Figure~\ref{change} show that indeed when $s$ and $\delta_0$ are inside the recoverable region, the estimation error is small, and it will grow with an increasing $s$. Moreover, the error remains small for relatively small $s$, but once $s$ exceeds $s_0$ the error starts to increase; that's when the non-stationary series is not in the $\varepsilon$-recoverable region. 
This observation agrees with our non-asymptotic bounds on estimation error. 
Moreover, we conduct similar experiments for \textsc{ar}$(2)$ case to validate these findings for a more general case; the results can be found in Appendix~\ref{appendix:add_exp}.

{\it Experiment 3.} We compare our method with the method in \citet{zhang2020real}. In the following, we refer to their method as the ``$\ell_2$ variant,'' since it is obtained by solving the convex program with same objective function as \eqref{cvx_opt_1} except for a different constraint: $\sum_{i=1}^{T-1} (f_{i+1} - f_{i})^2 < \delta.$ Again, $\delta\geq0$ is the tuning parameter. We should mention \citet{zhang2020real} did not have a systematic way to tune $\delta$ and here we enhance their method by adding our statistical test based hyperparameter tuning as well.

The piecewise linear dynamic background is generated by $f_i = \sum_{k=1}^i \delta_0(U_k-0.5), i = 1,\dots,T$. Here $U_k = u_i$ for all $k\in \{ k_i, \dots, k_{i+1}\}, i = 0, \dots, s-1$, where $0=k_0<k_1<\cdots<k_s=T$, $k_1,\dots,k_{s-1}$ are randomly selected from $\{1,\dots,T-1\}$ and $u_i \in [0,1], i = 0, \dots, s-1$, is a sequence of i.i.d. uniform random numbers. We consider two cases: (1) $s=1500$, $\delta_0=0.05$, $\norm{ \Delta}_1 = 25.2$; and (2) $s=100$, $\delta_0=0.1$, $\norm{ \Delta}_1 = 46.9$. Here, $s$ denotes the number of changes in the slope; the change vector is not sparse.

\begin{figure}[htp]
\centering
\subfigure{\includegraphics[width=1\linewidth]{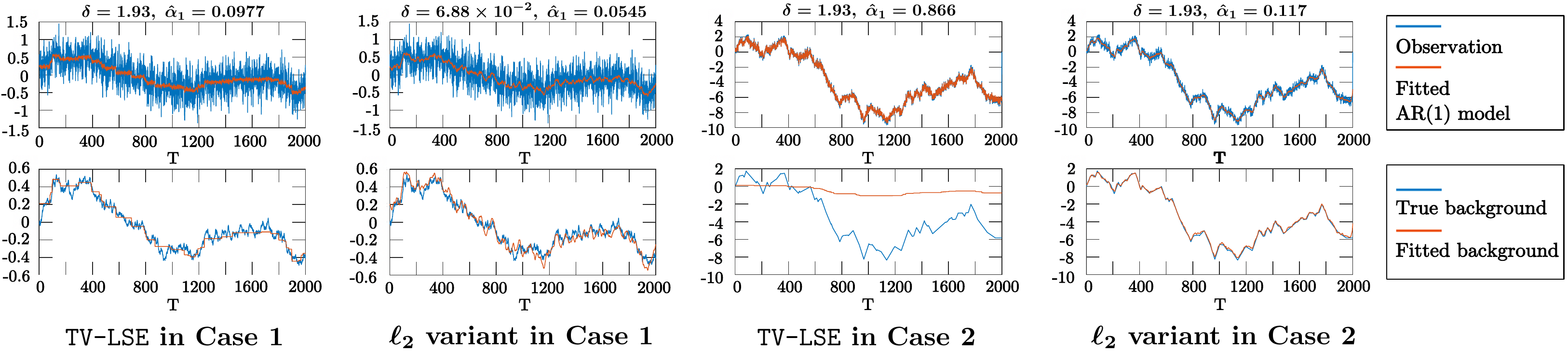}}
    \caption{Comparison of proposed method and the $\ell_2$ variant in \citet{zhang2020real}. We investigated two cases. In Case 1, the dynamic background oscillates more but a with smaller magnitude, whereas in Case 2, the dynamic background is smoother but has larger one-step difference. Our piecewise constant fitted background better captures the dynamics in Case 1, whereas the $\ell_2$ variant can better approximate the dynamics in Case 2.}\label{exp4}
\end{figure}

Figure~\ref{exp4} shows that in Case 1, our proposed method yields a very accurate $\hat \alpha_1$, even though the dynamic background drastically oscillates. This is because the one-step changes are small in magnitude, and therefore, a constant can still serve as a good approximation within some short time window, i.e., this type of sequence is still within the recoverable region. The $\ell_2$ variant yields a  biased estimate for $\alpha_1$, which is probably the reason that \citet{zhang2020real} focus on relatively smooth and structured dynamics. 

In Case 2, even though the dynamic background is smoother than the previous example, the dynamic background changes drastically (large $\norm{ \Delta}_1$). Thus, in this case, the piecewise constant function is a poorer approximation to the dynamic background. This type of sequence is outside the recoverable region, and our proposed method may not work well for those sequences. Nevertheless, the $\ell_2$ variant, together with our proposed hyperparameter tuning procedure, performs well in recovering the serial dependence and serves as an alternative to our proposed estimator. This result agrees with \citet{zhang2020real}, where they demonstrated the good performance of this $\ell_2$ variant when dealing with relatively structured dynamics, 
since $\ell_2$ constraint can lead to a smooth background. In addition, we should mention a polynomial approximation method used in \cite{xu2008bootstrapping} does not perform well in fitting unstructured dynamics and can hardly compete with these two aforementioned non-parametric methods. The numerical comparison with this polynomial method can be found at 
Appendix~\ref{appendix:add_exp}.


{\it Experiment 4.} Finally, we compare the confidence intervals obtained via two bootstrap methods. We adopt the following experimental setting: $\alpha_1 = 0.1, \sigma_0^2 = 0.1$, $T = 1000$. The dynamic drift is piecewise constant with $\delta_0 = 0.1$, $s = 100$. The bootstrap replication is $N=100$; we use standard normal random numbers as $v_t$'s in residual-based wild bootstrap; for local block bootstrap, we choose block size $b=20$ and local neighborhood length $B = 50$.
We illustrate one replication result by plotting the histogram of $\hat \alpha_1$'s from bootstrap samples in Figure~\ref{fig:bootstrap}.

\begin{figure}[htp]
\centering
\includegraphics[width=1\linewidth]{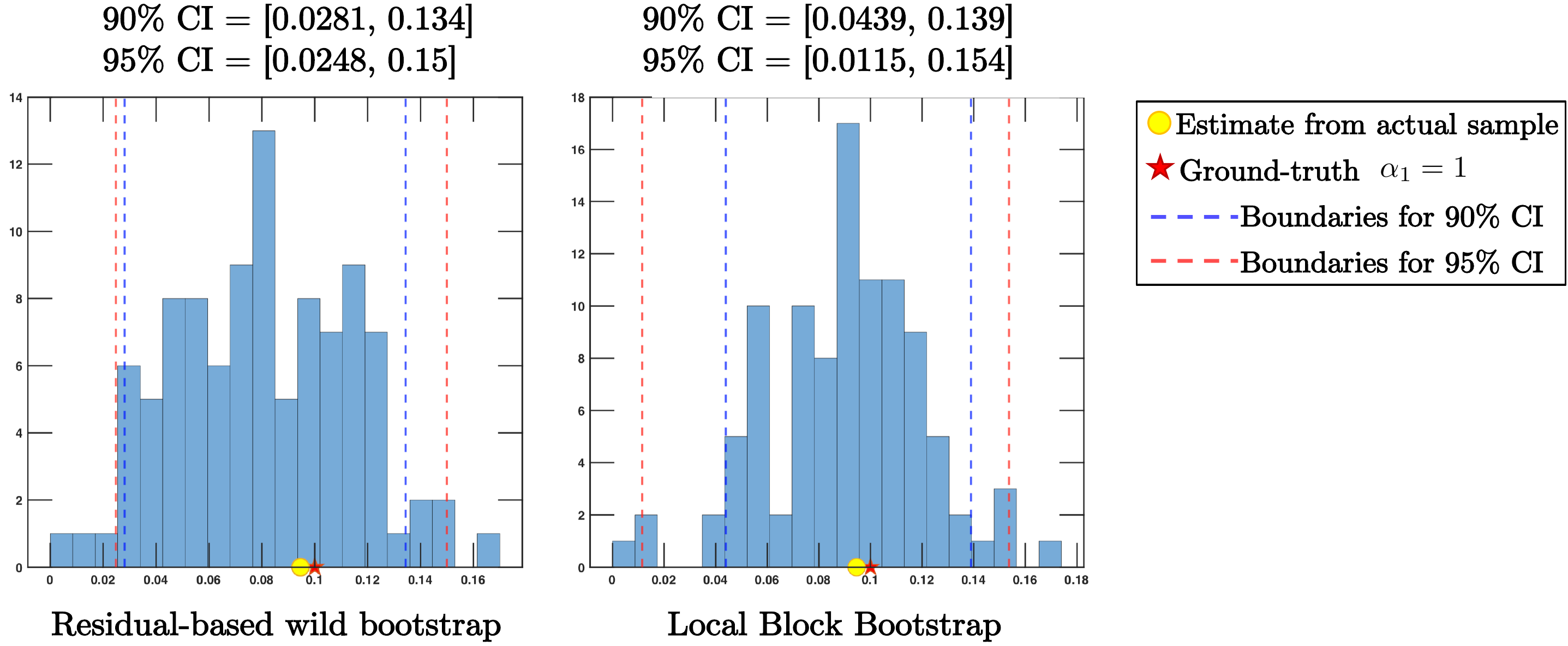}
\caption{Histogram of $\hat \alpha_1$'s from residual-based wild bootstrap samples  (left) and local block bootstrap samples (right). We can see that the local block bootstrap method yields a smaller confidence interval but lower coverage accuracy.}\label{fig:bootstrap}
\end{figure}

From 50 repetition of the above procedure, we find that: (i) the coverage accuracy of 90\% and 95\% confidence intervals: {0.84} and {0.90} for residual-based wild bootstrap and {0.84} and {0.88} for local block bootstrap; 
(ii) the average lengths of 90\% and 95\% confidence intervals: {0.10} and {0.12} for residual-based wild bootstrap and {0.095} and {0.114} for local block bootstrap. The coverage accuracy is slightly lower than the theoretical value since $T = 1000$ is relatively small. The comparison indicates that local block bootstrap tends to yield smaller confidence intervals but has slightly lower coverage accuracy.

\section{Real-data study}\label{sec:real_data}

To validate its performance, we apply our proposed method to real data from a psychological experiment. Consider a reaction time (RT) dataset collected from human subjects. The data are taken from a publicly available database introduced by \citet{Rahnev2020} with 149 individual datasets with human data on different tasks. Here we only analyze a single dataset named \texttt{Maniscalco\_2017\_expt1} chosen based on the fact that it has RT data included and features a large number of trials per subject. 

The data come from an experiment where human subjects made a series of 1000 perceptual judgments over a period of about one hour. Participants were seated in front of a computer and made their responses using a standard keyboard. The task, which is standard in the field, consisted of deciding whether a briefly presented (33 ms) noisy sinusoidal grating was oriented clockwise or counterclockwise from vertical. Subjects responded as quickly as possible but without sacrificing accuracy. The experimenters recorded each judgment's reaction time (that is, the time from the onset of the visual stimulus to the button press used to indicate the subject's response), thus creating a time series of 1000 values for each subject. Data were obtained from 28 subjects.

\begin{figure}[htp]
\centering
\subfigure{\includegraphics[width=1\linewidth]{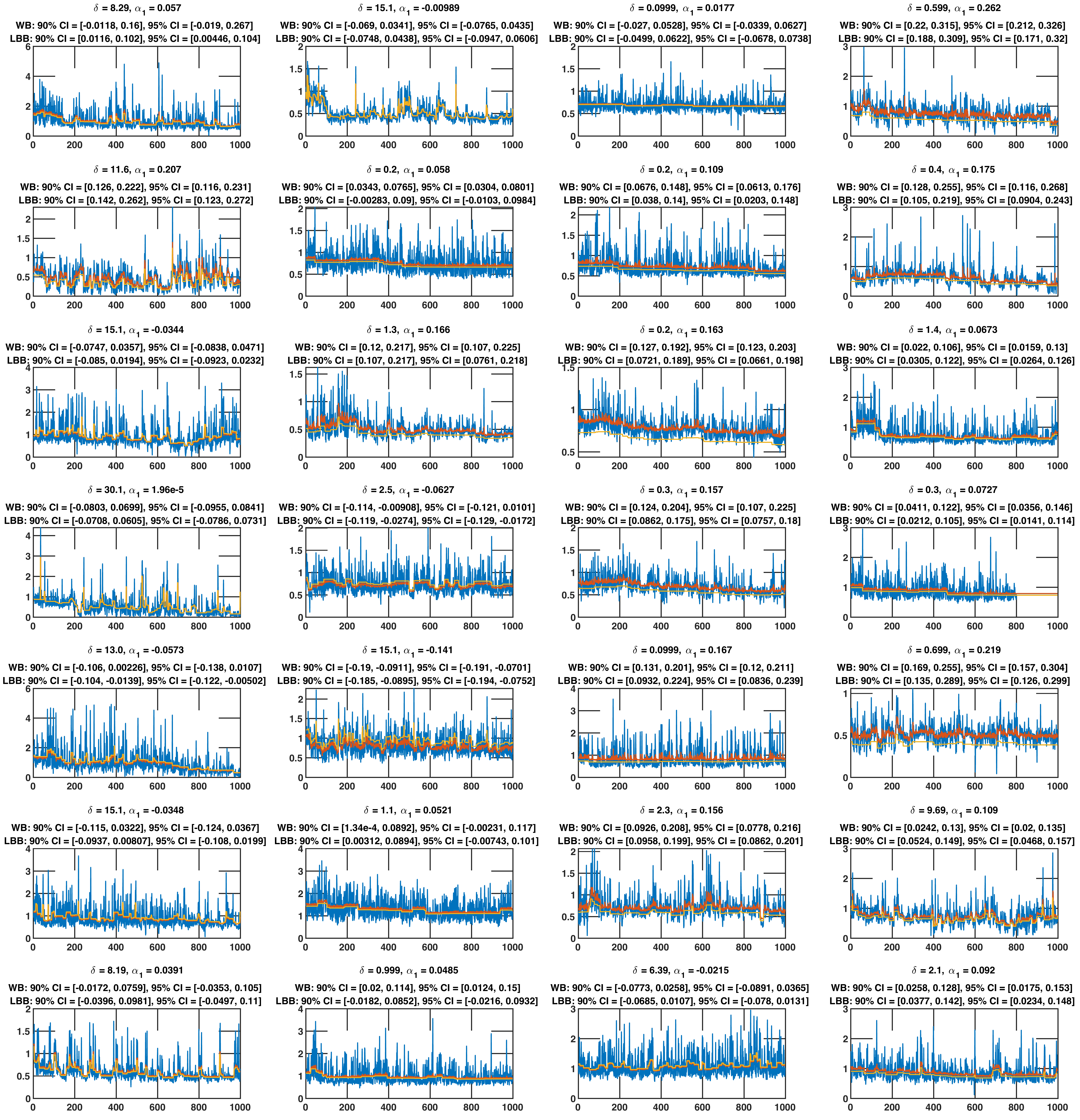}}
    \vspace{-.2in}
    \caption{Experimental results on reaction times for all 28 subjects. Subjects 1 to 28 are organized in the order of left to right and top to bottom. The blue, red and yellow lines correspond to the raw RT values, fitted \textsc{ar}$(1)$ model and fitted dynamic background, respectively. On the top of each figure, we report $\delta$ selected by Ljung-Box test on the logarithm of residuals, estimated \textsc{ar}$(1)$ coefficient and 90\% and 95\% confidence intervals based on residual-based wild bootstrap and local block bootstrap samples. Overall, we observe the presence of substantial drift that varies significantly between subjects but is recovered very well by our proposed method.}\label{fig:realexp_rt}
\end{figure}

We first pre-process the raw data by dealing with missing values and obvious outliers. To be precise, we treat RTs that exceed 10 times the interquartile range (i.e., the difference between 75th and 25th percentiles) as outliers and the rest as normal observations. Since naively omitting missing data in time series data will break the serial correlation, we use the median of the normal observations to impute those missing values. The same median is used to replace all outliers. We propose a data-adaptive procedure to tune $\delta$ by applying the Ljung-Box test on the logarithm of original residuals since they are strongly right-skewed. We plot the results for all 28 subjects in Figure~\ref{fig:realexp_rt}. More details on why we choose logarithm transform is in Appendix~\ref{appendix:add_exp}. The confidence intervals are constructed via bootstrapping with the same bootstrapping parameters in our simulation.

\begin{figure}[htp]
\centering
\subfigure{\includegraphics[width=\linewidth]{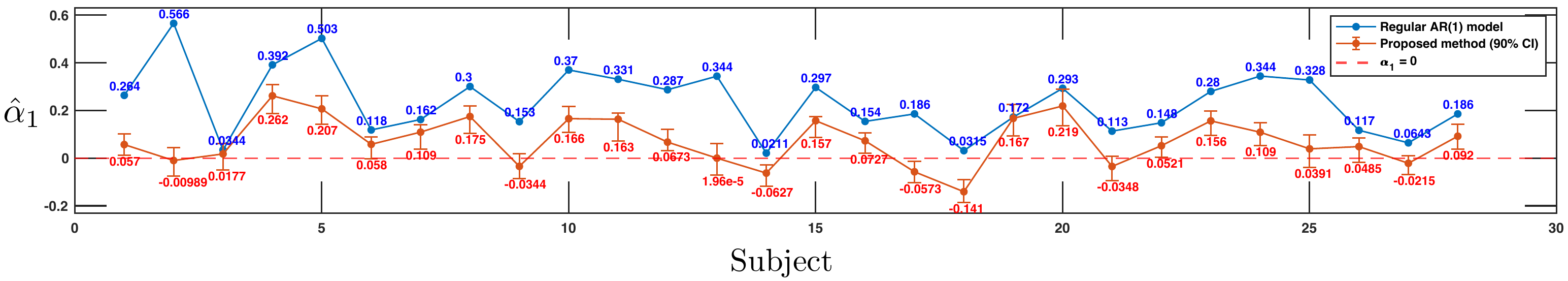}}
    \caption{Comparison between our proposed method and regular \textsc{ar}$(1)$ model for all 28 subjects. The errorbar is 90\% confidence interval computed using the local block bootstrap method. Regular \textsc{ar}$(1)$ model typically yields larger $\hat \alpha_1$, since it confuses the dynamic drifts as autoregressive effect.}\label{fig:realexp_rt_compare_1}
\end{figure}

Overall, Figure~\ref{fig:realexp_rt} shows that our method can faithfully capture the underlying dynamics. More specifically, we make four distinct observations:
\begin{itemize}
\item It is clear that there is a substantial background drift. Even in the raw data without any modeling, the drift can often be observed but is even more apparent after applying our method of recovering it. Further, the drift is relatively smooth without big one-step changes, which is exactly the type of dynamical drift that our method can capture well.
\item The background drift has a complex shape and varies significantly from person to person. While for some subjects, the RT series appears to be monotonically decreasing (e.g., subjects 4, 6, 7, 8, 11, 12, 15, 16, 26, and 28) or even close to stationary (e.g., subjects 3 and 19), the remaining subjects exhibit complex trends without any obvious pattern. These differences between subjects demonstrate that the trends need to be identified on the individual time series level and cannot make strong structural assumptions about the dynamic drift. Instead, to be able to capture real data, the dynamic background has to be modeled with minimal structural assumptions. 

\item Our method of fitting the background drift recovers reasonable estimate of the autoregressive coefficient $\alpha_1$. Specifically, $\alpha_1$ is positive or close to 0 for most subjects, which is expected given the extensive previous literature on RT \citep{Laming1968}. Nevertheless, our method recovers a negative $\alpha_1$ for subject 18, which could indicate that the RT series is not universally positively autocorrelated as assumed before and suggests the need for more detailed investigations on this issue. Further, the proposed method appears to provide a good fit for the empirically observed RT data across individual subjects, and the size of the hyperparameter $\delta$ tends to be larger for time series that visually appear to be less stationary. Thus, our method recovers both $\alpha_1$ and $\delta$ well, and provides a useful description of the data dynamics.

\item Our method provides a substantial improvement over the \textsc{ar}$(1)$ model that is typically used to recover the autocorrelation coefficient in psychology and neuroscience. As shown in Figure 9, the \textsc{ar}$(1)$ model leads to very high and clearly inflated estimates of $\alpha_1$ because the model confuses the dynamical drift for an autocorrelation. Overall, our method performs very well on real data from experiments where it is likely to be applied in the future and is a major advance over the standard \textsc{ar}$(1)$ model.

\end{itemize}

\section{Summary}\label{sec:discussion}

In this paper, we develop a total variation constrained least square estimator to estimate serial correlation in the presence of unknown and unstructured dynamic backgrounds. The method approximates the dynamic background via a piece-wise constant function and can approximate a wide range of highly unstructured dynamics. We also developed a statistically principled approach based on the Ljung-Box test to select the tuning parameter. We establish theoretical performance guarantees of our method via upper error bound and develop performance lower bound to characterize the condition for near-optimality (in $\ell_2$ estimation error sense) within the set of $\epsilon$-recoverable sequences. Extensive numerical experiments validate our theory and demonstrate the good performance of our method compared with the state-of-the-art. We apply our method to a psychological study on human reaction times and find that there is indeed substantial and complex-shaped drift in these data. The recovered autocorrelation values are generally positive, confirming the long-hypothesized presence of serial dependence in human reaction times \citep{Laming1968}. The proposed method is thus general and is likely to receive wide adoption in both psychology and neuroscience in studying human and animal decision making.

\section*{Acknowledgement}
The first two authors are supported by NSF CCF-1650913, NSF DMS-1938106, and NSF DMS-1830210. The last author is supported by NIH R01MH119189, NIH R21MH122825, and the Office of Naval Research N00014-20-1-2622.

\bibliographystyle{plainnat}  
\bibliography{paper-ref} 

\appendix

\newpage

\section{Hyperparameter tuning and Bootstrap confidence interval}\label{appendix:tuning_CI}

Our proposed hyperparameter tuning procedure is:
(i) Set an interval $[\delta_\ell,\delta_u]$ where we believe the best $\delta$ lies in based on prior knowledge; 
(ii) For any $\varepsilon>0$, to make sure the Euclidean distance between selected $\delta$ and the optimal one is less than $\varepsilon$, we divide this interval into $n = \lfloor (\delta_u - \delta_\ell)/\varepsilon \rfloor$ parts with same length $\varepsilon$ and denote the endpoints by $\delta^{(1)},\dots,\delta^{(n+1)}$; 
(iii) For each $\delta^{(j)}$, we fit the proposed estimator as defined by \eqref{cvx_opt_1} and construct the residual sequence by $r_i = x_i - \sum_{j=1}^p \hat \alpha_j x_{i-j} - \hat f_i, \ i = 1,\dots,T$; 
(iv) Apply (Lag-p) Ljung-Box test to the residual sequence to obtain a $p$-value $p_j$; 
(v) The $\varepsilon$-optimal tuning parameter is $\delta^{(j)}$ with $j = \argmax_{j\in\{ 1,\dots,n+1\}} p_j$.
Further details on Ljung-Box test can be found in Section~\ref{tests} in Appendix~\ref{appendix:background}.
    
    





Next, we present how to construct a bootstrap confidence interval.
For our first method residual-based wild bootstrap: (i) we first perform proposed tuning procedure to obtain tuning parameter $\delta$ and the corresponding estimates $\hat \alpha_j$'s and $\hat f_i$'s; (ii) then we calculate the residuals $\hat r_i$'s as suggested in step 2.(i) in proposed tuning procedure; (iii) residual-based wild bootstrap sample is constructed recursively by \eqref{DGP} with $\hat \alpha_j$'s, $\hat f_i$'s and $\Tilde{r}_i = \hat r_i v_i$, where $v_i$'s are i.i.d. random numbers with zero mean and unit variance. As for local block bootstrap, we first choose an integer block size $b$ and local neighborhood size $B$. We partition $T$ samples into $M = \lceil T/b \rceil$ blocks. Then, for $m = 0, \dots, M-1$, the local block bootstrap sample is $\Tilde{x}_{mb+j} = x_{I_m+j-1}, j = 1,\dots,b$, where $I_m$ is a uniform random integer drawn from $\{\max(1,mb-B) ,\dots, \min(T-b+1,mb + B)\}$. In \citet{paparoditis2002local}, it is required that (i) $b/B \rightarrow 0$ as $b \rightarrow \infty$; (ii) when $T \rightarrow \infty$, $T/B \rightarrow 0$ but $B \rightarrow \infty$.

After obtaining the bootstrap sample, we apply proposed tuning procedure to this pseudo-series with $\delta_\ell = \delta - n \varepsilon$ and $\delta_u = \delta + n \varepsilon$ to obtain estimates $\Tilde \alpha_j$'s (we choose $n=2$ in numerical simulation). Then, we repeat this procedure $N$ times to construct a confidence interval by the empirical distribution of $\Tilde \alpha_j$'s. For bootstrap samples, we only need to search around the $\varepsilon$-optimal $\delta$ for the optimal tuning parameter of the pseudo-series since it closely resembles the actual observation. This helps to reduce the computational cost of bootstrapping.

\section{Background knowledge}\label{appendix:background}
\subsection{Ljung-Box test and Durbin-Watson test}\label{tests} Ljung-Box test, sometimes known as the Ljung–Box Q test, is designed to test if there still exhibits serial correlation in the residual sequence. The null hypothesis is $H_0:$ The data are independently distributed. The test statistic is $$Q=T(T+2) \sum_{k=1}^{h} \frac{\hat{\rho}_{k}^{2}}{n-k},
$$where $T$ is the sample size, $\hat{\rho}_{k}$  is the sample autocorrelation at lag $k$, and $h$ is the number of lags being tested. For sequence $\{x_1,\dots,x_T\}$, the sample autocorrelation $\hat{\rho}_{k}$ is defined as$$
\hat{\rho}
_k=\frac{\hat{\gamma}(k)}{\hat{\gamma}(0)}, \ 
\text{  where } \ 
\hat{\gamma}(k)=\frac{1}{T} \sum_{t=1}^{T-|k|}\left(x_{t+|k|}-\bar{x}\right)\left(x_{t}-\bar{x}\right).
$$Here, $\{x_1,\dots,x_T\}$ is residual sequence if one wants to implement Ljung-Box test. Under $H_0$, the test statistic asymptotically follows a $\chi _{{(h)}}^{2}$ distribution. The $p$-value of Ljung-Box test is $\mathrm{pr}(\chi _{{(h)}}^{2} > Q)$.

Durbin-Watson test serves the same purpose. For residual $ e_{t}=\rho e_{t-1}+\nu _{t}$, the test statistic is
$$
d=\frac{\sum_{t=2}^{T}\left(e_{t}-e_{t-1}\right)^{2}}{\sum_{t=1}^{T} e_{t}^{2}}.
$$It tests null hypothesis: $H_0: \rho =0$ against alternative hypothesis $H_1: \rho \neq 0$.

\subsection{Golden-section search}\label{golden}

Golden-section search is a efficient and robust technique for finding an extremum (minimum or maximum) of a function inside a specified interval. For any given $\delta$, if we solve the convex program \eqref{cvx_opt_1}, calculate the residual sequence and perform the hypothesis test on it as we mentioned in Section\ref{subsec:tuning}, we will obtain a $p$-value. That is, we have a mapping that maps $\delta$ to $p$, which we denote as $p=f(\delta)$. In our numerical experiment, we show that $f$ is unimodal by Figure~\ref{exp2}. Therefore, we can speed up the parameter tuning procedure by Golden-section search. The detailed steps are provided below in Algorithm~\ref{algo_golden}.

\begin{algorithm}[htp]
    \caption{Hyperparameter tuning procedure: a Golden-section search variant.}\label{algo_golden}
    \textbf{Input: }Observations $x_{1},\dots,x_T$, given history $x_{-p+1},\dots,x_0$, a pre-specified interval $[\delta_\ell,\delta_u]$ to search the best $\delta$ and tolerance $\varepsilon>0$.
    
    \textbf{Output: } $\varepsilon$-optimal hyperparameter $\delta$.
    \begin{itemize}
        \item[\textbf{1}] Determine two intermediate points $\delta_1 = \delta_\ell + d$ and $\delta_2 = \delta_u - d$, where $d = \frac{\sqrt{5}-1}{2} (\delta_u - \delta_\ell)$
        \item[\textbf{2}] For $k=1,2$: fit the proposed estimator as defined by \eqref{cvx_opt_1} with hyperparameters $\delta_k$; construct the residual sequence by $r_i^{(k)} = x_i - \sum_{j=1}^p \hat \alpha_j{(k)} x_{i-j} - \hat f_i{(k)}, \ i = 1,\dots,T$; apply (Lag-p) Ljung-Box test to the residual sequence $\{r_i{(k)}\}_{i=1}^T$ to obtain a $p$-value $p_k = f(\delta_k)$. 
        
        If $f(\delta_1)>f(\delta_2)$, update $\delta_\ell, \delta_1, \delta_2, \delta_u$ as follows $$
\begin{array}{l}
\delta_\ell=\delta_{2}, \ \delta_{2}=\delta_{1}, \ \delta_{u}=\delta_{u}, \ \delta_{1}=\delta_{1}+\frac{\sqrt{5}-1}{2}\left(\delta_{u}-\delta_\ell\right);
\end{array}
$$Otherwise, update $\delta_\ell, \delta_1, \delta_2, \delta_u$ as follows$$
\begin{array}{l}
\delta_\ell=\delta_\ell, \ \delta_{u}=\delta_{1}, \ \delta_{1}=\delta_{2}, \ \delta_{2}=\delta_{u}-\frac{\sqrt{5}-1}{2}\left(\delta_{u}-\delta_\ell\right).
\end{array}
$$

        \item[\textbf{3}] If $\delta_{u}-\delta_\ell<\varepsilon$, set $\delta_{\max} = (\delta_{u}+\delta_\ell)/2$ and stop iterating; otherwise, go back to step \textbf{2}.    

\end{itemize}
 
\end{algorithm}

Compared to $\lfloor (\delta_u - \delta_\ell)/\varepsilon \rfloor + 1$ searches in proposed tuning procedure, Golden-section search can achieve $\varepsilon$-optimality with just $\lfloor\log(\varepsilon/(\delta_u - \delta_\ell))/\log (0.618)\rfloor + 1$ searches.

\section{Proofs}\label{appendix:proof}

\subsection{Proof of Theorem~\ref{thm_main_upper}}

To begin with, we prove Theorem~\ref{thm_main_upper} by using Proposition~\ref{thm_upper_bound}:

\begin{proof}[Proof of Theorem~\ref{thm_main_upper}]
Denote estimation error by $e = \hat \beta_T - \beta$. By triangle inequality, we have\begin{equation*}
    \sqrt{(\hat \alpha_1 - \alpha_1)^2 + (\hat \mu - \mu)^2} = \norm{e_{I_1}}_2 \leq\sqrt{ 2(\hat \alpha_1^2 +  \alpha_1^2+\hat \mu^2 +  \mu^2) } \leq  2\delta_s  =
2\sqrt{{\operatorname{vol}(\mathcal{S})}/{\pi}}.
\end{equation*}

By definition \eqref{phi_minmax}, $\phi_{min} (2)$ is the smallest eigenvalue of $\Tilde{\XX}^\T \Tilde{\XX}/T$, where $\Tilde{\XX} = (x_{0:T-1}, \bm 1)$ and $\bm 1$ is vector of all ones. Since $\phi_{min} (2) = 0$ if and only if $x_{0:T-1} = a \bm 1$ for some $a \in \RR$, $\phi_{min} (2)$ will be of constant order with overwhelming probability. Since $k$ can be chosen arbitrarily small, $\kappa$ can be lower bounded by a positive constant with high probability. Since $\norm{\eta}_{\infty}= O\left((\log T)^{3/2}/ T^{1/2}\right)$, for large enough $T$, we can simplify Proposition~\ref{thm_upper_bound} into
$$\norm{e_{I_1}}_2 \leq  \Tilde{C}_1 \sqrt{s} \max \left\{s \delta_0 , \delta\right\},$$ 
where $\Tilde{C}_1>0$ is a constant. Together with the naive upper bound by triangle inequality, we obtain $$\norm{e_{I_1}}_2 \leq \min \left\{\Tilde{C}_1 \sqrt{s} \max \left\{s \delta_0 , \delta\right\},2\sqrt{{\operatorname{vol}(\mathcal{S})}/{\pi}} \right\}.$$

Since $\norm{\hat {{ \Delta}}}_2 \leq \norm{\hat {{ \Delta}} }_1 \leq \delta$  and $\norm{ \Delta}_2 \leq \sqrt{s}\delta_0$, by triangle inequality, we have $$\norm{e_{I_2\cup I_3}}_2 = \norm{\hat {{ \Delta}} -  \Delta }_2 \leq \norm{\hat {{ \Delta}}}_2 + \norm{ \Delta}_2 \leq \delta + \sqrt{s}\delta_0.$$

Again, by triangle inequality, $\norm{\hat \beta_T - \beta}_2 \leq \norm{e_{I_1}}_2 + \norm{e_{I_2\cup I_3}}_2$. We complete the proof.
\end{proof}

The proof of Proposition~\ref{thm_upper_bound} is highly involved. We sketch its proof as follows:

\begin{proof}[Proof of Proposition~\ref{thm_upper_bound}]
We first state four very useful lemmas. 
\begin{lemma}[High probability bounds for sub-Gaussian noise]\label{subG_bound}
For sub-Gaussian random noise $\varepsilon_1,\dots,\varepsilon_T \overset{\text{i.i.d.}}{\sim} \operatorname{subG}(\sigma_0^2)$ and $x_1,\dots,x_T$ generated by \eqref{DGP2} (given $x_0$), for all $ A_1 >1$, $A_2 > \sqrt{A_1}$ and $A_3 > 0$, define events $$\cA_1 = \left\{ |\varepsilon_i| \leq \sqrt{2A_1 \sigma_0^2 \log (2T)}, \ i= 1,\dots,T \right\},$$ and $$\cA_2 = \left\{  \left|\sum_{i=j}^T \varepsilon_i\right|\leq  2 A_2 \sigma_0 \sqrt{T}\log(2T), \ j = 1,\dots,T \right\},$$
we have \begin{equation*}
    \mathrm{pr}\left( \cA_1 \right) \geq 1 - (2T)^{1-A_1}, \quad \mathrm{pr}\left( \cA_2|\cA_1 \right)\geq 1-(2T)^{1-A_2^2/A_1}.
\end{equation*}
Furthermore, if we assume there exists a constant $C_1>0$ such that \begin{equation}\label{boundedbackground}
    |f_i| \leq C_1 \sqrt{ \log T}, \quad i=1,\dots,T,
\end{equation}define event $$\cA_3  = \left\{\left|\sum_{i=1}^T \varepsilon_i x_{i-1} \right| \leq  2\sqrt{2} A_3 \sigma_0^2(c_1+1) \log(2T)\sqrt{T\log(2T)}/({1-\alpha_1})\right\},$$
where $c_1>0$ is a constant such that $|f_i| \leq c_1 \sqrt{2A_1 \sigma_0^2 \log (2T)}, i=1,\dots,T$, then we will have\begin{equation*}
    \mathrm{pr}\left( \cA_3| \cA_1\right)  \geq 1 - 2(2T)^{-A_3^2/A_1^2}.
\end{equation*}
\end{lemma}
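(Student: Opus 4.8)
\textbf{Proof plan for Lemma~\ref{subG_bound}.}

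The plan is to handle the three events $\cA_1$, $\cA_2$, $\cA_3$ in turn, each via a standard sub-Gaussian concentration inequality followed by a union bound over the $T$ indices involved. For $\cA_1$: each $\varepsilon_i \sim \operatorname{subG}(\sigma_0^2)$ satisfies the tail bound $\mathrm{pr}(|\varepsilon_i| > u) \le 2\exp(-u^2/(2\sigma_0^2))$. Setting $u = \sqrt{2A_1\sigma_0^2\log(2T)}$ makes the right-hand side equal to $2(2T)^{-A_1} = (2T)^{-A_1}\cdot 2$; a union bound over $i=1,\dots,T$ then gives $\mathrm{pr}(\cA_1^{\mathsf c}) \le T\cdot 2(2T)^{-A_1} = (2T)^{1-A_1}$, which is the claimed bound (absorbing the factor $2/2=1$ cleanly because $2T\cdot(2T)^{-A_1} = (2T)^{1-A_1}$). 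This is the easy part.

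For $\cA_2$, I would work conditionally on $\cA_1$: given $\cA_1$, each $\varepsilon_i$ is a bounded random variable supported on an interval of half-width $\sqrt{2A_1\sigma_0^2\log(2T)}$, hence (after re-centering) sub-Gaussian with parameter at most $\sigma_0^2 A_1 \log(2T)$ by Hoeffding's lemma. Wait --- one must be slightly careful: conditioning on $\cA_1$ perturbs the mean away from zero, but the perturbation is of smaller order and one can either absorb it into the constant or note that the conditional mean is $o(1)$; the cleaner route is to use that a sum of $T-j+1 \le T$ independent (conditionally) sub-Gaussian variables with parameter $O(\sigma_0^2 A_1\log(2T))$ has sub-Gaussian parameter $O(T\sigma_0^2 A_1\log(2T))$, so $\mathrm{pr}(|\sum_{i=j}^T\varepsilon_i| > v \mid \cA_1) \le 2\exp(-v^2/(2T\sigma_0^2 A_1\log(2T)))$. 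Choosing $v = 2A_2\sigma_0\sqrt{T}\log(2T)$ gives exponent $-2A_2^2\log(2T)/A_1 \cdot (1+o(1))$, so each term is at most $(2T)^{-A_2^2/A_1}$ up to constants, and a union bound over $j=1,\dots,T$ yields $\mathrm{pr}(\cA_2^{\mathsf c}\mid\cA_1) \le (2T)^{1-A_2^2/A_1}$. The condition $A_2 > \sqrt{A_1}$ is exactly what makes this probability vanish.

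For $\cA_3$, the main obstacle is that $x_{i-1}$ is itself a random quantity correlated with the noise through the autoregressive recursion, so $\sum_i \varepsilon_i x_{i-1}$ is not a simple sum of independent terms. The plan is to first unroll the recursion \eqref{DGP} to write $x_{i-1} = \sum_{\ell \ge 0}\alpha_1^\ell(f_{i-1-\ell} + \varepsilon_{i-1-\ell})$ (using $|\alpha_1|<1$, which holds on $\mathcal{S}$ for $\delta_s$ suitably bounded), so that on $\cA_1$ one gets the deterministic bound $|x_{i-1}| \le (c_1+1)\sqrt{2A_1\sigma_0^2\log(2T)}/(1-\alpha_1)$ using assumption \eqref{boundedbackground} and the geometric sum; the key point is that under $\cA_1$ every $|f_j| + |\varepsilon_j|$ is bounded by $(c_1+1)\sqrt{2A_1\sigma_0^2\log(2T)}$. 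Then, conditionally on $\cA_1$, $\sum_{i=1}^T \varepsilon_i x_{i-1}$ is a sum of a martingale difference sequence (since $x_{i-1}$ is measurable with respect to the past noise) with increments bounded by $|\varepsilon_i|\cdot\max_j|x_{j-1}| = O(\sigma_0^2\log(2T)/(1-\alpha_1))$, so Azuma--Hoeffding gives $\mathrm{pr}(|\sum\varepsilon_i x_{i-1}| > w \mid \cA_1) \le 2\exp(-w^2/(2T B^2))$ with $B = O(\sigma_0^2\log(2T)/(1-\alpha_1))$; choosing $w = 2\sqrt{2}A_3\sigma_0^2(c_1+1)\log(2T)\sqrt{T\log(2T)}/(1-\alpha_1)$ makes the exponent $-A_3^2\log(2T)/A_1^2$ up to constants, giving the stated $2(2T)^{-A_3^2/A_1^2}$. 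The delicate points are tracking the constants through the geometric series and justifying the martingale structure after conditioning on $\cA_1$ (which is why the statement conditions on $\cA_1$ rather than working unconditionally).
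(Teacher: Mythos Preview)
Your proposal is correct and follows essentially the same route as the paper: sub-Gaussian tail plus union bound for $\cA_1$, then on $\cA_1$ bound each $\varepsilon_i$ by $c_2 = \sqrt{2A_1\sigma_0^2\log(2T)}$ and apply Azuma--Hoeffding plus a union bound for $\cA_2$, and for $\cA_3$ unroll the recursion to get $|x_{i-1}| \le (c_1+1)c_2/(1-\alpha_1)$ on $\cA_1$, observe that $\{\varepsilon_i x_{i-1}\}$ is a bounded martingale difference sequence, and apply Azuma--Hoeffding once more. The paper phrases the $\cA_2$ step directly via Azuma--Hoeffding rather than via Hoeffding's lemma, and simply ignores the mean-shift-under-conditioning subtlety you flagged; otherwise the arguments and constants match.
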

By Lemma~\ref{subG_bound}, we will have
\begin{equation*}
    \begin{split}
        \mathrm{pr}(\cA_1 \cap \cA_2 \cap \cA_3) &= \mathrm{pr}(\cA_1)\left(1 - \mathrm{pr}(\cA_2^{\mathsf{c}} \cup \cA_3^{\mathsf{c}}|\cA_1)\right)\\
        & >1- (2T)^{1-A_1} -(2T)^{1-A_2^2/A_1}- 2(2T)^{-A_3^2/A_1^2}.
    \end{split}
\end{equation*}
This means event $\cA = \cA_1 \cap \cA_2 \cap \cA_3$ holds with probability at least $1- (2T)^{1-A_1} -(2T)^{1-A_2^2/A_1}- 2(2T)^{-A_3^2/A_1^2}$. 

\begin{lemma}[Restricted $\ell_1$ estimation error]\label{restricted_error}
Under assumption \eqref{boundedbackground}, for our proposed estimator $\hat \beta_T$, as defined in \eqref{l1_constraint} or equivalently \eqref{l1_penal}, if we choose $k \in (0,1)$ and tuning parameter $\lambda$ such that $k\lambda = O\left((\log T)^{3/2}/ T^{1/2}\right)$, then on event $\cA$, the estimation error $e = \hat \beta_T - \beta$ satisfies: $$\norm{e_{I_3}}_1 \leq \min \left\{\frac{1+k}{1-k}\norm{e_{I_2}}_1,\frac{2s\delta_0}{1-k}\right\} + \frac{k}{1-k}\norm{e_{I_1}}_1.$$
\end{lemma}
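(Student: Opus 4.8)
\textbf{Proof proposal for Lemma~\ref{restricted_error}.}

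\textbf{Proof proposal.} The plan is to run the standard ``basic inequality'' argument on the penalized program \eqref{l1_penal} — which the excerpt has already shown to be equivalent to \eqref{l1_constraint} by Lagrangian duality — and then convert the resulting bound into the advertised cone condition by exploiting that the $\ell_1$ penalty touches only the sub-vector $\Delta$ (the coordinates $I_2\cup I_3$) while $\Delta$ itself is supported on $I_2$. First I would use optimality of $\hat\beta_T$ for \eqref{l1_penal} evaluated against the feasible point $\beta$, substitute the data-generating identity \eqref{DGP2}, $x_{1:T}=\XX\beta+\varepsilon_{1:T}$, expand both squared terms, and discard the nonnegative prediction-error term $\frac{1}{2T}\norm{\XX e}_2^2$ with $e=\hat\beta_T-\beta$. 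This reduces everything to the scalar inequality $\lambda\norm{\hat\Delta}_1\le\lambda\norm{\Delta}_1+\langle g,e\rangle$, where $g=\frac1T\XX^\T\varepsilon_{1:T}$ is the noise part of the least-squares gradient at $\beta$, i.e. $g=-\eta$ in the notation of the variational-inequality analysis, so $\norm{g}_\infty=\norm{\eta}_\infty$.

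Next I would bound the cross term by $\langle g,e\rangle\le\norm{g}_\infty\norm{e}_1$ and control $\norm{g}_\infty$ on the event $\cA$ using Lemma~\ref{subG_bound}: the $L$-block entries of $\XX^\T\varepsilon_{1:T}$ are the partial sums $\sum_{i=j}^T\varepsilon_i$ (governed by $\cA_2$), the entry from the $x_{0:T-1}$ column is $\sum_{i=1}^T\varepsilon_i x_{i-1}$ (governed by $\cA_3$, which is where the boundedness assumption \eqref{boundedbackground} on the background enters), and the all-ones column again gives a partial sum covered by $\cA_2$. This yields $\norm{g}_\infty=O((\log T)^{3/2}/T^{1/2})$ on $\cA$, so one can choose $k\in(0,1)$ with $k\lambda=O((\log T)^{3/2}/T^{1/2})$ and $\norm{g}_\infty\le k\lambda$, turning the inequality into $\norm{\hat\Delta}_1\le\norm{\Delta}_1+k\norm{e}_1$.

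Finally I would split the $\ell_1$ norms along $I_1,I_2,I_3$. Since $\Delta$ vanishes on $I_1\cup I_3$, one has $\norm{\hat\Delta}_1=\norm{\hat\Delta_{I_2}}_1+\norm{e_{I_3}}_1$ and $\norm{\Delta}_1=\norm{\Delta_{I_2}}_1$, so the inequality becomes $\norm{e_{I_3}}_1\le\big(\norm{\Delta_{I_2}}_1-\norm{\hat\Delta_{I_2}}_1\big)+k\big(\norm{e_{I_1}}_1+\norm{e_{I_2}}_1+\norm{e_{I_3}}_1\big)$. For the first branch of the minimum I would bound $\norm{\Delta_{I_2}}_1-\norm{\hat\Delta_{I_2}}_1\le\norm{\hat\Delta_{I_2}-\Delta_{I_2}}_1=\norm{e_{I_2}}_1$ by the reverse triangle inequality and then solve for $\norm{e_{I_3}}_1$, obtaining $\norm{e_{I_3}}_1\le\frac{1+k}{1-k}\norm{e_{I_2}}_1+\frac{k}{1-k}\norm{e_{I_1}}_1$. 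For the second branch I would instead use the crude bound $\norm{\Delta_{I_2}}_1-\norm{\hat\Delta_{I_2}}_1\le\norm{\Delta_{I_2}}_1=\norm{\Delta}_1\le(s-2)\delta_0$ together with a companion estimate obtained by reusing the same basic inequality (keeping only $\norm{\hat\Delta_{I_2}}_1\ge0$) to show $\norm{e_{I_2}}_1$ is itself $O(s\delta_0)$ up to the $k\norm{e}_1$ slack; collecting the $k$-terms then gives $\norm{e_{I_3}}_1\le\frac{2s\delta_0}{1-k}+\frac{k}{1-k}\norm{e_{I_1}}_1$. Taking the minimum of the two branches is the claim.

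The main obstacle is the second step. Because the design matrix $\XX=(x_{0:T-1},L)$ is rank-deficient — $\XX^\T\XX$ is singular since $x_{0:T-1}$ is appended to the square matrix $L$ — one cannot invoke generic sub-Gaussian row concentration for $\norm{\XX^\T\varepsilon_{1:T}}_\infty$; each type of column must be handled separately, and the $x_{0:T-1}$ column in particular forces the extra boundedness hypothesis \eqref{boundedbackground} and the more delicate martingale-type tail bound behind $\cA_3$. A secondary subtlety is that the coordinates in $I_1$ are unpenalized, so there is no $\lambda$-slack to absorb the gradient noise there; this is exactly why $\cA$ must also control the $\sum_i\varepsilon_i x_{i-1}$ and $\sum_i\varepsilon_i$ columns, after which those contributions are simply carried along in the final $\frac{k}{1-k}\norm{e_{I_1}}_1$ term. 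The remaining work — tracking constants through the two branches and verifying the factor $2$ in $2s\delta_0$ — is routine bookkeeping.
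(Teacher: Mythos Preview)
Your proposal is correct and follows essentially the same route as the paper: start from the basic inequality for the penalized problem \eqref{l1_penal}, control $\norm{\XX^\T\varepsilon_{1:T}/T}_\infty\le k\lambda$ on $\cA$ via Lemma~\ref{subG_bound}, then split along $I_1,I_2,I_3$ and use the reverse triangle inequality for the first branch. For the second branch the paper is marginally cleaner than your ``companion estimate'' detour: it keeps $\norm{e_{I_2\cup I_3}}_1$ on the left and bounds $\norm{\Delta}_1-\norm{\hat\Delta}_1+\norm{e_{I_2\cup I_3}}_1\le 2\norm{\Delta}_1\le 2s\delta_0$ in one stroke (via $\norm{e_{I_2\cup I_3}}_1=\norm{\hat\Delta-\Delta}_1\le\norm{\hat\Delta}_1+\norm{\Delta}_1$), which immediately yields the exact constant $2s\delta_0/(1-k)$ without needing to separately control $\norm{e_{I_2}}_1$.
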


\begin{lemma}\label{lower_bound_lma} Under assumption \eqref{boundedbackground}, on event $\cA$, for any integer $m \leq T+1-|J_0|$,we have\begin{equation}\label{lower_bound}
    \begin{split}
        \frac{1}{\sqrt{T}} &\norm{\XX e}_2
         \geq  \left(\sqrt{\phi_{min} (2)} - \sqrt{2\phi_{\max} (m)} \frac{k}{1-k}\right) \norm{e_{I_1}}_2 \\
         &  -\left(\frac{2 \sqrt{\phi_{\max} (m)}}{1-k} +\sqrt{(s-2)\left(1-\frac{s-1}{T}\right)}\right) s \delta_0 - \sqrt{(s-2)\left(1-\frac{s-1}{T}\right)} \delta,
    \end{split}
\end{equation}where $\phi_{min} (\cdot)$ and $\phi_{\max}(\cdot)$ are defined in \eqref{phi_minmax}.
\end{lemma}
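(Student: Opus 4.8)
\textbf{Proof proposal for Lemma~\ref{lower_bound_lma}.}

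The plan is to lower bound $\|\XX e\|_2/\sqrt{T}$ by isolating the contribution of the two ``dense'' coordinates $I_1 = \{1,2\}$ (the autoregressive coefficient and the initial level) from the ``sparse'' coordinates $I_2 \cup I_3$ (the background increments). First I would split the error as $e = e_{I_1} + e_{I_2 \cup I_3}$ and apply the reverse triangle inequality:
\begin{equation*}
  \tfrac{1}{\sqrt{T}}\|\XX e\|_2 \;\geq\; \tfrac{1}{\sqrt{T}}\|\XX e_{I_1}\|_2 \;-\; \tfrac{1}{\sqrt{T}}\|\XX e_{I_2\cup I_3}\|_2 .
\end{equation*}
For the first term, since $e_{I_1}$ is supported on the two columns $(x_{0:T-1}, \bm 1)$, the definition \eqref{phi_minmax} of $\phi_{\min}$ gives $\|\XX e_{I_1}\|_2/\sqrt{T} \geq \sqrt{\phi_{\min}(2)}\,\|e_{I_1}\|_2$ directly, because $R_1$ contains any vector whose support is exactly $I_1$. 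The work is then entirely in controlling the second term, which I would further split into the part of $\Delta$ that is genuinely non-zero (indices in $I_2$, the true change points, of which there are $s-2$) and the part where $\hat\Delta$ is forced to be zero in the truth but the estimator may put mass (indices in $I_3$).

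For the $I_3$-part I would invoke Lemma~\ref{restricted_error}, which says that on the event $\cA$, $\|e_{I_3}\|_1 \leq \frac{k}{1-k}\|e_{I_1}\|_1 + \frac{2s\delta_0}{1-k}$ (using the second branch of the min). Combining this with the submultiplicative bound $\|\XX v\|_2 \leq \sqrt{T\,\phi_{\max}(m)}\,\|v\|_2$ for any $v$ supported on at most $m$ coordinates off $I_1$ — valid whenever $m \le T+1-|J_0|$ so that the relevant submatrix is among those defining $\phi_{\max}$ — and the norm inequality $\|v\|_2 \le \|v\|_1$, I get a bound of the form $\|\XX e_{I_3}\|_2/\sqrt{T} \leq \sqrt{\phi_{\max}(m)}\big(\frac{k}{1-k}\|e_{I_1}\|_1 + \frac{2s\delta_0}{1-k}\big)$, and then use $\|e_{I_1}\|_1 \le \sqrt{2}\|e_{I_1}\|_2$ to convert the $\|e_{I_1}\|_1$ term into the $\sqrt{2\phi_{\max}(m)}\,\frac{k}{1-k}\|e_{I_1}\|_2$ term that appears in \eqref{lower_bound}. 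For the $I_2$-part, here I would \emph{not} use $\phi_{\max}$ but instead exploit the explicit structure of $L$: the columns of $L$ indexed by $I_2$ are the $s-2$ ``step'' vectors, and for $\hat\Delta - \Delta$ restricted to these coordinates one has $\|L(\hat\Delta-\Delta)_{I_2}\|_2 \le \sqrt{T-(s-1)}\,\|(\hat\Delta-\Delta)_{I_2}\|_1$ or a similar estimate coming from the fact that each step column has at most $T-(s-1)$ of its later entries overlapping; bounding $\|(\hat\Delta-\Delta)_{I_2}\|_1$ crudely by $s\delta_0 + \delta$ (the true increments contribute $s\delta_0$, the estimate contributes at most $\delta$ by the total-variation constraint) gives the $\sqrt{(s-2)(1-\tfrac{s-1}{T})}(s\delta_0+\delta)$ term, and a parallel argument for the overlap between $I_3$ and the true support contributes the remaining $\sqrt{(s-2)(1-\tfrac{s-1}{T})}\,s\delta_0$ piece.

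The main obstacle I anticipate is the $I_2$ (true-support) term: one cannot afford to bound $\|\XX e_{I_2}\|_2$ through $\phi_{\max}(s)$, because $\phi_{\max}$ over $s = \Theta(T)$ columns of $L$ need not be $O(1)$ — the cumulative-sum structure of $L$ makes large blocks of its columns highly collinear and its Gram matrix ill-conditioned — so the whole point is to bound this term ``by hand'' using the staircase geometry of $L$ and the total-variation budget $\delta$ rather than a generic eigenvalue inequality. Getting the exact combinatorial constant $\sqrt{(s-2)(1-(s-1)/T)}$ right, and checking that the admissibility condition $m \le T+1-|J_0|$ is exactly what is needed for the $\phi_{\max}(m)$ inequality to be legitimate on the $I_3$ coordinates, are the delicate points; everything else is triangle inequalities and the two cited lemmas.
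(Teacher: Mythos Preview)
Your overall strategy is the paper's: reverse triangle inequality, $\phi_{\min}(2)$ on $I_1$, Lemma~\ref{restricted_error} combined with a $\phi_{\max}$-type bound on $I_3$, and a separate treatment of $I_2$. But there is one genuine gap and one unnecessary worry.

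\textbf{Gap on $I_3$.} You apply $\|\XX e_{I_3}\|_2/\sqrt{T}\le\sqrt{\phi_{\max}(m)}\,\|e_{I_3}\|_2$ and then pass to $\|e_{I_3}\|_1$. The first inequality is only licensed when $e_{I_3}$ has support of size at most $m$; but the lemma is stated for \emph{any} $m\le T+1-|J_0|=|I_3|$, and in the proof of Proposition~\ref{thm_upper_bound} one actually takes $m=1$. Your reading of the hypothesis $m\le T+1-|J_0|$ is backwards: it says $m\le|I_3|$, so $e_{I_3}$ can have support far larger than $m$, not smaller. The paper closes this with the standard block-partition (``shelling'') step: write $I_3=J_0^{\mathsf c}=\bigcup_{\ell=1}^L J_\ell$ with $|J_\ell|\le m$, apply the triangle inequality and the $\phi_{\max}(m)$ bound on each block, and then use $\sum_\ell\|e_{J_\ell}\|_2\le\sum_\ell\|e_{J_\ell}\|_1=\|e_{I_3}\|_1$. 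Only after this do you invoke Lemma~\ref{restricted_error} on $\|e_{I_3}\|_1$. Without the partition the step does not go through.

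\textbf{The $I_2$ term.} Your instinct that $\phi_{\max}(s-2)$ need not be $O(1)$ is correct, but $O(1)$ is not needed. The paper simply bounds $\|\XX e_{I_2}\|_2/\sqrt T\le\sqrt{\phi_{\max}(s-2)}\,\|e_{I_2}\|_2$, uses $\|e_{I_2}\|_2\le\|e_{I_2}\|_1\le\delta+s\delta_0$, and replaces $\sqrt{\phi_{\max}(s-2)}$ by the explicit constant $\sqrt{(s-2)(1-(s-1)/T)}$, which is $O(\sqrt s)$ and is precisely the factor that appears in the statement. No bespoke ``staircase geometry of $L$'' argument is required; your proposed combinatorial route might also reach a bound of the same order, but it is a detour relative to what the paper does, not a necessity.
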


For simplicity, in the following we denote $\kappa = \sqrt{\phi_{min} (2)} - k\sqrt{2\phi_{\max} (m)} /({1-k})$ and \begin{equation*}
C(\delta,\delta_0,s,m) = \frac{2 \sqrt{\phi_{\max} (m)}s \delta_0}{1-k}  + \sqrt{(s-2)\left(1-\frac{s-1}{T}\right)} (s \delta_0+\delta).
\end{equation*}We further denote $J_0 = I_1 \cup I_2$, i.e. the set of indices for all non-zero coefficients. 

\begin{lemma}\label{upper_bound_lma}Under assumption \eqref{boundedbackground}, on event $\cA$, we have
\begin{equation}\label{upper_bound}
    \norm{\XX e}_2^2/T \leq 2 \norm{\eta}_{\infty} \norm{e_{J_0}}_1/(1-k),
\end{equation}Additionally, we have $\norm{\eta}_{\infty}= O\left((\log T)^{3/2}/ T^{1/2}\right)$.
\end{lemma}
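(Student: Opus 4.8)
The statement to prove is Lemma~\ref{upper_bound_lma}: on the event $\cA$, we have $\norm{\XX e}_2^2/T \leq 2 \norm{\eta}_{\infty} \norm{e_{J_0}}_1/(1-k)$, together with the bound $\norm{\eta}_{\infty} = O\left((\log T)^{3/2}/T^{1/2}\right)$.

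The plan is to exploit the variational-inequality characterization of $\hat\beta_T$ and $\beta$ set up in the proof outline. First I would write down the two variational inequalities \ref{VI_1} and \ref{VI_2}: $\hat\beta_T$ solves $\mathrm{VI}[F_{x_{1:T}},\cX]$ and $\beta$ solves $\mathrm{VI}[\Tilde F_{x_{1:T}},\cX]$. Plugging $w = \beta$ into the first and $w = \hat\beta_T$ into the second (both are feasible, since $\beta,\hat\beta_T\in\cX$), we get $\langle F_{x_{1:T}}(\beta),\beta-\hat\beta_T\rangle \geq 0$ and $\langle \Tilde F_{x_{1:T}}(\hat\beta_T),\hat\beta_T-\beta\rangle\geq 0$. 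Adding these and using $F_{x_{1:T}}(z)-\Tilde F_{x_{1:T}}(z) = \eta$ (a constant vector, since both fields are affine with the same linear part $A[x_{1:T}]/T$), together with the monotonicity identity $\langle \Tilde F_{x_{1:T}}(\hat\beta_T)-\Tilde F_{x_{1:T}}(\beta),\hat\beta_T-\beta\rangle = \norm{\XX e}_2^2/T$ where $e = \hat\beta_T-\beta$, I would arrive at $\norm{\XX e}_2^2/T \leq \langle \eta, e\rangle = -\langle\eta,e\rangle$ up to sign bookkeeping, hence $\norm{\XX e}_2^2/T \leq |\langle\eta,e\rangle| \leq \norm{\eta}_\infty \norm{e}_1$.

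The remaining work is to upgrade $\norm{e}_1$ to $\norm{e_{J_0}}_1/(1-k)$ in this bound. For this I would split $\norm{e}_1 = \norm{e_{I_1}}_1 + \norm{e_{I_2}}_1 + \norm{e_{I_3}}_1 = \norm{e_{J_0}}_1 + \norm{e_{I_3}}_1$ and invoke Lemma~\ref{restricted_error}, which gives $\norm{e_{I_3}}_1 \leq \frac{1+k}{1-k}\norm{e_{I_2}}_1 + \frac{k}{1-k}\norm{e_{I_1}}_1 \leq \frac{1+k}{1-k}\norm{e_{J_0}}_1$ (bounding both $\norm{e_{I_2}}_1$ and $\norm{e_{I_1}}_1$ by $\norm{e_{J_0}}_1$ and using $k<1$). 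Then $\norm{e}_1 \leq \norm{e_{J_0}}_1\left(1 + \frac{1+k}{1-k}\right) = \frac{2}{1-k}\norm{e_{J_0}}_1$, which after substitution yields exactly $\norm{\XX e}_2^2/T \leq 2\norm{\eta}_\infty\norm{e_{J_0}}_1/(1-k)$. (Here it is worth being careful about the constant; one may instead use the sharper form of Lemma~\ref{restricted_error} to land on precisely the stated constant, but the argument structure is the same.)

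For the second claim, the bound $\norm{\eta}_\infty = O\left((\log T)^{3/2}/T^{1/2}\right)$, I would write $\eta = (A[x_{1:T}]\beta - a[x_{1:T}])/T = \XX^\T(\XX\beta - (\sum_i x_ix_{i-1}, x_{1:T}^\T)^\T)/T$ and identify, using $x_{1:T} = \XX\beta + \varepsilon_{1:T}$, that the entries of $T\eta$ are precisely the inner products $\sum_i \varepsilon_i x_{i-1}$ and the partial sums $\sum_{i=j}^T\varepsilon_i$ coming from the columns $x_{0:T-1}$ and $L$ of $\XX$. On the event $\cA = \cA_1\cap\cA_2\cap\cA_3$ these are controlled by Lemma~\ref{subG_bound}: the $L$-part is bounded by $2A_2\sigma_0\sqrt{T}\log(2T)$ and the $x_{0:T-1}$-part by the $\cA_3$-bound of order $\log(2T)\sqrt{T\log(2T)} = O(T^{1/2}(\log T)^{3/2})$. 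Dividing by $T$ gives the claimed rate, the dominant contribution being the $(\log T)^{3/2}/T^{1/2}$ term from $\cA_3$. The main obstacle is the careful verification that the coordinatewise structure of $\eta$ matches exactly the quantities controlled in Lemma~\ref{subG_bound} (in particular that assumption \eqref{boundedbackground} is what makes $x_{i-1}$ well-behaved enough for the $\cA_3$ bound to apply), and tracking the constant in the restricted-error step so that the final inequality has the stated form; the variational-inequality manipulation itself is short once monotonicity of the affine field is used.
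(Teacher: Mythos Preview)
Your overall architecture matches the paper's: derive $\norm{\XX e}_2^2/T \leq \norm{\eta}_\infty\norm{e}_1$ from the variational-inequality formulation, then use Lemma~\ref{restricted_error} to replace $\norm{e}_1$ by $\frac{2}{1-k}\norm{e_{J_0}}_1$, and finally read off $\norm{\eta}_\infty$ from the entries of $\XX^\T\varepsilon_{1:T}/T$ via Lemma~\ref{subG_bound}. The second and third steps are fine and essentially identical to the paper.

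The gap is in the first step. Plugging $w=\beta$ into the \emph{weak} VI for $\hat\beta_T$ and $w=\hat\beta_T$ into the \emph{weak} VI for $\beta$ gives you
\[
\langle F_{x_{1:T}}(\beta),\beta-\hat\beta_T\rangle\ge 0,\qquad
\langle \Tilde F_{x_{1:T}}(\hat\beta_T),\hat\beta_T-\beta\rangle\ge 0.
\]
Since $F_{x_{1:T}}(\beta)=\eta$ and $\Tilde F_{x_{1:T}}(\hat\beta_T)=A[x_{1:T}]e/T$, adding these yields
\[
\norm{\XX e}_2^2/T \;\ge\; \langle \eta,e\rangle,
\]
which is the \emph{wrong direction}; the ``sign bookkeeping'' does not flip it. The inequality you need comes instead from the \emph{strong} VI at $\hat\beta_T$ (equivalently, first-order optimality of the convex program over $\cX$): $\langle F_{x_{1:T}}(\hat\beta_T),\,\beta-\hat\beta_T\rangle\ge 0$. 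Writing $F_{x_{1:T}}(\hat\beta_T)=\eta+A[x_{1:T}]e/T$ and expanding gives
\[
-\langle \eta,e\rangle-\norm{\XX e}_2^2/T\ge 0
\quad\Longrightarrow\quad
\norm{\XX e}_2^2/T\le -\langle \eta,e\rangle\le \norm{\eta}_\infty\norm{e}_1,
\]
which is exactly what the paper does (invoking continuity of $F_{x_{1:T}}$ to pass from the weak to the strong solution). Once you make this one-line replacement, the rest of your argument goes through with the stated constant $2/(1-k)$.
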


Here, we consider two cases: (i) $\norm{e_{I_1}}_1 \leq \norm{e_{I_2}}_1$ and (ii) $\norm{e_{I_1}}_1 > \norm{e_{I_2}}_1$. In case (i), we have $$\norm{e_{I_1}}_2 \leq \norm{e_{I_1}}_1 \leq \norm{e_{I_2}}_1 = \norm{\hat{{\Delta}}_{I_2} -  \Delta_{I_2}}_1 \leq \norm{\hat{{\Delta}}_{I_2}}_1 + \norm{ \Delta}_1 \leq \delta + s \delta_0.$$ In case (ii), $\norm{e_{J_0}}_1 = \norm{e_{I_1}}_1 +\norm{e_{I_2}}_1 < 2 \norm{e_{I_1}}_1.$ By \eqref{lower_bound} and \eqref{upper_bound}, we have
\begin{equation*}
    \begin{split}
        \frac{2}{1-k} \norm{\eta}_{\infty} \norm{e_{J_0}}_1 &\geq \left(\kappa \norm{e_{I_1}}_2 -  C(\delta,\delta_0,s,m)\right)^2\\
        &\geq \kappa^2 \norm{e_{I_1}}_2 \norm{e_{I_1}}_1/\sqrt{2} -  2 \kappa C(\delta,\delta_0,s,m) \norm{e_{I_1}}_2\\
        &\geq \kappa^2 \norm{e_{I_1}}_2 \norm{e_{J_0}}_1/2\sqrt{2} -  2 \kappa C(\delta,\delta_0,s,m) \norm{e_{J_0}}_1.
    \end{split}
\end{equation*}Rearranging the terms in the above inequality and choosing $m=1$, we have
\begin{equation*}
    \norm{e_{I_1}}_2 \leq \frac{4\sqrt{2}}{\kappa^2} \left( \frac{\norm{\eta}_{\infty}}{1-k} +  \kappa C(\delta,\delta_0,s,1)
 \right).
\end{equation*}
Denote $C_{\delta,\delta_0,s}=C(\delta,\delta_0,s,1)$. Since $\phi_{\max}(1) = 1-1/T$, combing the results above proves \eqref{upper_error_bound_para_of_interest}. 
\end{proof}

\subsection*{Proofs of Lemmas in the proof of Proposition~\ref{thm_upper_bound}}
\begin{proof}[Proof of Lemma~\ref{subG_bound}]



For sub-Gaussian random noise $\varepsilon_i \sim \text{subG}(\sigma_0^2)$, we will have: $$\mathrm{pr}(|\varepsilon_i| \leq c_2 , \ i = 1,\dots,T) \geq 1 - 2 T \exp\left\{-\frac{c_2^2}{2\sigma_0^2}\right\}.$$ Setting $c_2 = \sqrt{2 A_1 \sigma_0^2 \log(2T)}$, we prove the first inequality.

By the uniform upper bound on the dynamic background \eqref{boundedbackground}, we can find a constant $c_1$ such that dynamic background is uniformly bounded by $c_1 c_2$. Thus, on event $\cA_1$ we will get
$$-(c_1+1)c_2\leq x_i - \alpha_1 x_{i-1} \leq (c_1+1)c_2, \quad (i = 1,\dots,T).$$

By the convergence of geometric series we have $|x_i| \leq (c_1+1)c_2 /(1-\alpha_1)$ and thus we have $$|x_{i-1}\varepsilon_i | \leq \frac{(c_1+1)c_2^2}{1-\alpha_1} = c_3, \quad (i = 1,\dots,T).$$

Since $E    [x_{i-1}\varepsilon_i|x_{i-1}] = x_{i-1} E    [\varepsilon_i] = 0$ and $$\Var (x_{i-1}\varepsilon_i |x_{i-1}) = x_{i-1}^2 \sigma_0^2 \leq \left(\frac{(c_1+1)c_2}{1-\alpha_1}\right)^2\sigma_0^2, $$ $\{x_{i-1}\varepsilon_i\}$ is a bounded martingale difference sequence with respect to filtration $\{ \sigma(x_1,\dots,x_{i-1})\}$.

By Azuma–Hoeffding inequality, we have 
$$\mathrm{pr} \left(\frac{1}{T} \bigg|\sum_{i=1}^T x_{i-1}\varepsilon_i \bigg|\geq c_4\right) \leq 2 \exp \left\{-\frac{ T c_4^2}{2c_3^2}\right\}.$$

Set\begin{equation}\label{Deltabound1}
    c_4 = A_3 \frac{2\sqrt{2}\sigma_0^2(c_1+1)}{1-\alpha_1} \log (2T) \sqrt{\frac{\log (2T)}{T}},
\end{equation}we prove the third inequality.

Similarly, on event $\cA_1$, by Azuma–Hoeffding inequality, we will obtain  \begin{equation*}
    \begin{split}
        \mathrm{pr} \left(\frac{1}{T} \bigg|\sum_{i=j}^T \varepsilon_i \bigg|\geq c_5\right) & \leq 2 \exp \left\{-\frac{ T^2 c_5^2}{2 (T-j) c_2^2}\right\} \\&< 2 \exp \left\{-\frac{ T c_5^2}{4 A_1 \sigma_0^2 \log(2T)}\right\}, \quad j = 1,\dots,T.
    \end{split}
\end{equation*}

Therefore,\begin{equation*}
    \begin{split}
        \mathrm{pr} \left(\frac{1}{T} \bigg|\sum_{i=j}^T \varepsilon_i \bigg| < c_5, \ j = 1,\dots,T\right) &\geq
        1 - \sum_{j=2}^T \mathrm{pr} \left(\frac{1}{T} \bigg|\sum_{i=j}^T \varepsilon_i \bigg|\geq c_5\right) \\&>
        1 - 2 T \exp \left\{-\frac{ T c_5^2}{4 A_1 \sigma_0^2 \log(2T)}\right\},
    \end{split}
\end{equation*}where the first inequality comes from union bound. Again, set \begin{equation}\label{Deltabound2}
    c_5  =  \frac{2 A_2 \sigma_0 \log(2T)}{\sqrt{T}},
\end{equation}we prove the second inequality.
\end{proof}

\begin{proof}[Proof of Lemma~\ref{restricted_error}]
By definition \eqref{l1_penal}, we have
$$\frac{1}{2T} \norm{ x_{1:T} - \XX \hat \beta_T}_2^2 + \lambda \norm{\hat{{ \Delta}}}_1 \leq \frac{1}{2T} \norm{ x_{1:T} - \XX \beta}_2^2 + \lambda \norm{ \Delta}_1.$$

Rearrange terms and we will get
$$\frac{1}{2T} \norm{\XX (\hat \beta_T - \beta)}_2^2 \leq \lambda (\norm{ \Delta}_1 - \norm{\hat{{ \Delta}}}_1) + \frac{1}{T}  \varepsilon_{1:T}^\T \XX (\hat \beta_T - \beta).$$

If we choose $k \lambda$ as follows
\begin{equation*}
    k \lambda = \frac{2\sqrt{2} A_3 \sigma_0^2(c_1+1)}{1-\alpha_1} \log(2T)\sqrt{\frac{\log(2T)}{T}}= O\left(\frac{(\log T)^{3/2}}{ T^{1/2}}\right),
\end{equation*}we have $k \lambda = c_4 > c_5$ for $T$ large enough, where $c_4$ and $c_5$ are defined in \eqref{Deltabound1} and \eqref{Deltabound2}, respectively. Then on event $\cA$, we have \begin{equation} \label{Delta_bound}
     \frac{1}{T} | \varepsilon_{1:T}^\T \XX |= \frac{1}{T}  \left( \bigg |\sum_{i=1}^T \varepsilon_{i} x_{i-1} \bigg |, \bigg |\sum_{i=1}^T \varepsilon_{i}\bigg |, \bigg |\sum_{i=2}^T \varepsilon_{i}\bigg |,\dots,  |\varepsilon_{T} |\right)^\T \leq k \lambda  \mathbf{1},
\end{equation}where $\mathbf{1} \in \RR^T$ is the vector of ones. Thus, we will obtain $ \norm{\eta}_\infty  = \norm{ \varepsilon_{1:T}^\T \XX/T}_\infty \leq k\lambda$ and
$$\frac{1}{2T} \norm{\XX (\hat \beta_T - \beta)}_2^2 \leq \lambda (\norm{ \Delta}_1 - \norm{\hat{{ \Delta}}}_1) + k\lambda \norm{\hat \beta_T - \beta}_1.$$

By pulsing $\lambda (1-k) \norm{e_{I_2 \cup I_3}}_1$ on both side of this equation, we will get
\begin{equation}\label{lma_res_eq1}
    (1-k) \norm{e_{I_2 \cup I_3}}_1 \leq  (\norm{ \Delta}_1 - \norm{\hat{{ \Delta}}}_1 + \norm{e_{I_2 \cup I_3}}_1) + k \norm{e_{I_1}}_1.
\end{equation}

Since $ \Delta = \beta_{I_2 \cup I_3}$, $e_{I_2 \cup I_3} = \hat{{\Delta}} -  \Delta$. By the sparse structure we know \begin{equation}\label{lma_res_eq2}
    \norm{ \Delta}_1 - \norm{\hat{{ \Delta}}}_1 + \norm{e_{I_2 \cup I_3}}_1\leq  2\norm{ \Delta}_1 \leq 2 s \delta_0.
\end{equation}

Meanwhile, since $\norm{ \Delta}_1$ takes value zero on index set $I_3$, we have $\hat \Delta_{I_3} = e_{ I_3}$ and thus $\norm{\hat{{\Delta}}_{I_3}}_1 = \norm{e_{ I_3}}_1$. Therefore, we have \begin{equation}\label{lma_res_eq3}
    \norm{ \Delta}_1 - \norm{\hat{{ \Delta}}}_1 + \norm{e_{I_2 \cup I_3}}_1 = \norm{ \Delta_{I_2 }}_1 - \norm{\hat {{\Delta}}_{I_2 }}_1 + \norm{e_{I_2 }}_1 \leq 2 \norm{e_{I_2 }}_1.
\end{equation}

Plugging \eqref{lma_res_eq2} and \eqref{lma_res_eq3} back into \eqref{lma_res_eq1}, we will get$$ \norm{e_{ I_3}}_1 \leq \min \left\{ \frac{1+k}{1-k}\norm{e_{ I_2}}_1, \frac{2s \delta_0}{1-k}  \right\} + k \norm{e_{I_1}}_1.$$
We complete the proof.
\end{proof}

\begin{proof}[Proof of Lemma~\ref{lower_bound_lma}] By \eqref{l1_constraint}, we have $$\norm{e_{I_2}}_2 \leq \norm{e_{I_2}}_1 = \norm{\hat{{\Delta}}_{I_2} -  \Delta_{I_2}}_1 \leq \norm{\hat{{\Delta}}_{I_2}}_1 + \norm{ \Delta}_1 \leq \delta + s \delta_0.$$ 
 
Partition index set ${J_0}^{\mathsf{c}}$ into L disjoint sets: ${J_0}^{\mathsf{c}} = \cup_{\ell=1}^L J_\ell$, where $|J_1| = \cdots = |J_{L-1}| = m$ and $|J_L|\leq m$, and  $\sum_{\ell=1}^L  \norm{e_{J_\ell}}_2 \leq \sum_{\ell=1}^L  \norm{e_{J_\ell}}_1 = \norm{e_{{J_0}^{\mathsf{c}}}}_1$, we get
\begin{equation*}
    \begin{split}
        \frac{1}{\sqrt{T}}\norm{\XX e}_2 &\geq \frac{1}{\sqrt{T}}\norm{\XX e_{J_0}}_2 - \frac{1}{\sqrt{T}}\norm{\XX e_{{J_0}^{\mathsf{c}}}}_2 \\
        &\geq \sqrt{\phi_{min} (2)} \norm{e_{I_1}}_2 - \sqrt{\phi_{\max} (s-2)} \norm{e_{I_2}}_2 - \sqrt{\phi_{\max} (m)} \sum_{\ell=1}^L  \norm{e_{J_\ell}}_2\\
        &\geq  \sqrt{\phi_{min} (2)} \norm{e_{I_1}}_2 - \sqrt{(s-2)\left(1-\frac{s-1}{T}\right)} (\delta + s \delta_0)- \sqrt{\phi_{\max} (m)}  \norm{e_{{J_0}^{\mathsf{c}}}}_1.
    \end{split}
\end{equation*}

Since $I_3 = {J_0}^{\mathsf{c}}$, $\sqrt{2}\norm{e_{I_1}}_2 \geq \norm{e_{I_1}}_1$, by Lemma~\ref{restricted_error}, we have
\begin{equation*}
    \begin{split}
        \frac{1}{\sqrt{T}} & \norm{\XX e}_2
         \geq  \left(\sqrt{\phi_{min} (2)} - \sqrt{\phi_{\max} (m)} \frac{\sqrt{2}k}{1-k}\right) \norm{e_{I_1}}_2 \\
         &   -\left(\frac{2 \sqrt{\phi_{\max} (m)}}{1-k} +\sqrt{(s-2)\left(1-\frac{s-1}{T}\right)}\right) s \delta_0 - \sqrt{(s-2)\left(1-\frac{s-1}{T}\right)} \delta.
    \end{split}
\end{equation*}

Denote $\kappa = \sqrt{\phi_{min} (2)} - \sqrt{\phi_{\max} (m)} \frac{\sqrt{2}k}{1-k}$ and we complete the proof.
\end{proof}



\begin{proof}[Proof of Lemma~\ref{upper_bound_lma}]
Since $\hat \beta_T$ is solution to $\mathrm{VI}[ F_{\textbf{x}_{1:T}}, \mathcal{X}]$, the weak VI, and the vector field $F_{\textbf{x}_{1:T}}(\cdot)$ is continuous, we have $\hat \beta_T$ is also solution to the strong VI. That is, $\hat \beta_T$ also satisfies $$\langle  F_{\textbf{x}_{1:T}}(\hat \beta_T), w-\hat \beta_T\rangle \geq 0, \quad \forall w \in \mathcal{X}.$$
In particular, we have $\langle  F_{\textbf{x}_{1:T}}(\hat \beta_T), \beta-\hat \beta_T\rangle \geq 0$. Meanwhile, we have $F_{\textbf{x}_{1:T}}(\hat \beta_T) = F_{\textbf{x}_{1:T}}(\beta) - A[\textbf{x}_{1:T}] (\beta - \hat \beta_T)/T.$ Therefore, we will have $$\left \langle  F_{\textbf{x}_{1:T}}(\beta) - \frac{1}{T} A[\textbf{x}_{1:T}] (\beta - \hat \beta_T), \beta-\hat \beta_T \right\rangle \geq 0.$$

Rearrange terms and recall that $\eta = F_{\textbf{x}_{1:T}}(\beta)$, we will get
\begin{equation}
    (\beta - \hat \beta_T)^\T (A[\textbf{x}_{1:T}]/T) (\beta - \hat \beta_T) \leq \langle  \eta , \beta-\hat \beta_T\rangle \leq \norm{\eta}_{\infty} \norm{\beta-\hat \beta_T}_1,
\end{equation}where the last inequality comes from Hölder's inequality.

Notice that $ A[\textbf{x}_{1:T}] =\XX^\T \XX$, we can re-express the inequality above as $$\frac{1}{\sqrt{T}}\norm{\XX e}_2^2 \leq \norm{\eta}_{\infty} \norm{e}_1  = \norm{\eta}_{\infty} \left(\norm{e_{J_0}}_1 + \norm{e_{{J_0}^{\mathsf{c}}}}_1\right) \leq \frac{2}{1-k} \norm{\eta}_{\infty} \norm{e_{J_0}}_1,  $$where the last inequality comes from Lemma~\ref{restricted_error}.

By \eqref{Delta_bound} and the choice of $k\lambda$, we get$$\norm{\eta}_{\infty}\leq \frac{2\sqrt{2} A_3 \sigma_0^2(c_1+1)}{1-\alpha_1} \log(2T)\sqrt{\frac{\log(2T)}{T}}= O\left((\log T)^{3/2}/ T^{1/2}\right).$$

We complete the proof.
\end{proof}

\subsection{Proof of Theorem~\ref{thm_main_lower}}

\begin{proof}[Proof of Theorem~\ref{thm_main_lower}]
By Proposition~\ref{thm_lower_bound}, to make $\ell_2$ error lower bounded by $C_2$ with probability greater than $1-C_6$, we need \begin{equation}\label{lower_bound_order}
    C_2 = \frac{1}{2} \exp\left\{-\frac{C_3 T + C_4\delta_0(T)\sum_{t=2}^T s(t)+ C_5\delta_0^2(T)\sum_{t=2}^T s^2(t) + \log 2}{C_6 s(T)}\right\}.
\end{equation}

Since $s(t)\leq t$, we will have a decreasing (w.r.t $t$) lower bound at approximately exponential rate. Thus, without any condition, the naive bound will be tighter compared to the one we just derive if $\sqrt{s(t)}\delta_0(t)$ goes to infinity. To make sure the lower bound $C_2$ we derive in \eqref{lower_bound_order} is of constant order, we need $s(t)$ at least of order $t$, i.e. condition \eqref{sparsity}. However, this makes $\sum_{t=2}^T s^2(t) = \Theta(T^3)$ and we further need $\delta_0(t)$ small enough when $t \in \{1,\dots,T_0\}$, i.e. condition \eqref{accuracy}. 
\end{proof}

\begin{proof}[Proof of Proposition~\ref{thm_lower_bound}]

First, we find a large enough $\varepsilon$-packing by the following Lemma.

\begin{lemma}\label{packing_num}Let $(V, \norm{\cdot})$ be a normed space. For $\Theta \subset V \subset \RR^d$, we have
$$
\left(\frac{1}{\varepsilon}\right)^{d} \frac{\operatorname{vol}(\Theta)}{\operatorname{vol}(B)} \leq  N(\Theta,\|\cdot\|, \varepsilon) \leq \left(\frac{3}{\varepsilon}\right)^{d} \frac{\operatorname{vol}(\Theta)}{\operatorname{vol}(B)},
$$where $B$ is the unit norm ball and $$
N(\Theta,\|\cdot\|, \varepsilon)=\max \{m: \exists \  \varepsilon \text {-packing of } \Theta \text { of size } m\}
$$ is the packing number.
\end{lemma}

Recall that the coefficient vector space is $\Theta_T = \{\beta: (\alpha_1,\mu) \in \mathcal{S}, \Delta \in \mathcal{B} \}$. Since $\delta_s$ is constant, $\Theta_T$ will have a constant order volume, even though $\delta_0$ can be very small. Thus, by Lemma~\ref{packing_num} we can find an $\varepsilon$-packing $\cN = \{\beta_1,\dots,\beta_N\} \subset \Theta_T$ such that \begin{equation}\label{ep_packing}
    N \geq C_7  \left(\frac{1}{\varepsilon}\right)^s,
\end{equation}
where $C_7$ is some positive constant.

\begin{lemma}\label{KL_upper_bound}
For any $\varepsilon$-packing $\cN = \{\beta_1,\dots,\beta_N\} \subset \Theta_T$, if the random noise is normally distributed, then the upper bound on KL divergence between the joint distributions of $ x_{1:T}$ generated by \eqref{DGP2} with coefficient chosen from $\cN$ is
$$\max_{i,j \in [N]} KL(\mathbf{p}_{\beta_i}||\mathbf{p}_{\beta_j}) \leq C_3 T + C_4\delta_0(T)\sum_{t=2}^T s(t)+ C_5\delta_0^2(T)\sum_{t=2}^T s^2(t),$$where $\mathbf{p}_{\beta}$ is joint probability density function (p.d.f.) of $ x_{1:T}$ generated by \eqref{DGP2} with coefficient $\beta$ and $C_3$, $C_4$ and $C_5$ are some positive constants dependent on $\delta_s$.
\end{lemma}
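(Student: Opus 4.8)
The plan is to write down the joint density of $x_{1:T}$ explicitly under Gaussian noise, express it through the residuals $\varepsilon_t = x_t - \alpha_1 x_{t-1} - f_t$, and then compare two such densities for coefficient vectors $\beta_i,\beta_j$ drawn from the packing $\cN$. Since the noise is i.i.d.\ $\mathcal{N}(0,\sigma_0^2)$, conditioning on the known history $x_0$ and factoring the joint density sequentially gives $\mathbf{p}_\beta(x_{1:T}) = \prod_{t=1}^T (2\pi\sigma_0^2)^{-1/2}\exp\{-(x_t - \alpha_1 x_{t-1} - f_t)^2/(2\sigma_0^2)\}$. First I would use this factorization to get the standard formula for the KL divergence between two such product-of-Gaussians laws with the \emph{same} variance but different conditional means: $KL(\mathbf{p}_{\beta_i}\|\mathbf{p}_{\beta_j}) = \frac{1}{2\sigma_0^2}\,\mathbb{E}_{\beta_i}\sum_{t=1}^T \big((\alpha_1^{(i)} - \alpha_1^{(j)}) x_{t-1} + (f_t^{(i)} - f_t^{(j)})\big)^2$, where $x_{t-1}$ is generated under $\beta_i$. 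Using $(a+b)^2 \le 2a^2 + 2b^2$ splits this into an autoregressive-coefficient term and a background term.

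Next I would bound each piece. For the background term, since both $f^{(i)}$ and $f^{(j)}$ live in $\mathcal{B}$ they are piecewise constant with at most $s(T)$ pieces and one-step jumps bounded by $\delta_0(T)$; so at time $t$ we have $|f_t^{(i)} - f_t^{(i)}| = O(s(t)\,\delta_0(T))$ after telescoping the increments, and $\sum_t (f_t^{(i)}-f_t^{(j)})^2 = O\big(\delta_0^2(T)\sum_{t=2}^T s^2(t)\big)$, which produces the $C_5\,\delta_0^2(T)\sum s^2(t)$ term. For the autoregressive term we need $\mathbb{E}_{\beta_i}\sum_t x_{t-1}^2$; here I would expand $x_{t-1} = \sum_{k} (\alpha_1^{(i)})^{k}(f_{t-1-k}^{(i)} + \varepsilon_{t-1-k})$ using $|\alpha_1^{(i)}| \le \delta_s < 1$, so $\mathbb{E}[x_{t-1}^2]$ is controlled by a constant (depending on $\delta_s$ and $\sigma_0^2$) plus a contribution from the background that is again $O(s(t)\delta_0(T))$ in magnitude and hence $O(s^2(t)\delta_0^2(T))$ in square, with cross terms of order $s(t)\delta_0(T)$. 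Since $(\alpha_1^{(i)}-\alpha_1^{(j)})^2 \le (2\delta_s)^2$ is a constant, multiplying through and summing over $t$ yields the $C_3 T$, $C_4\delta_0(T)\sum s(t)$, and $C_5 \delta_0^2(T)\sum s^2(t)$ terms, with all constants absorbing $\sigma_0^2$ and $\delta_s$. Taking the maximum over $i,j\in[N]$ is harmless because every bound used only the defining constraints of $\Theta_T$, which are uniform over the packing.

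The main obstacle I expect is the careful bookkeeping in the autoregressive term: one must simultaneously track (i) the stationary-like variance contribution of the accumulated noise, which is where the leading $C_3 T$ comes from, and (ii) the non-stationary background contribution, which must be matched against exactly the $s(t)$ and $s^2(t)$ orders claimed, not something coarser like $s(T)\,t$. Getting the telescoping bound $|f_t^{(i)}| = O(s(t)\delta_0)$ sharp — i.e., using that only $s(t)$ of the increments up to time $t$ are nonzero rather than all $t$ of them — is the crucial step that keeps the bound from degrading, and it relies on $\beta_i \in \Theta_T$ having at most $s(t)$ changes \emph{in the first $t$ coordinates}; I would state this monotonicity of $s(\cdot)$ explicitly (it is assumed in \eqref{sparsity}) and use it when summing. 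Once that is in hand, Proposition~\ref{thm_lower_bound} follows by plugging \eqref{ep_packing} and this KL bound into the standard Fano inequality $\inf_{\tilde\beta}\sup_{\beta\in\cN}\mathrm{pr}(\|\tilde\beta - \beta\| \ge \varepsilon/2) \ge 1 - \frac{\max_{i,j}KL(\mathbf{p}_{\beta_i}\|\mathbf{p}_{\beta_j}) + \log 2}{\log N}$, and Theorem~\ref{thm_main_lower} follows from Proposition~\ref{thm_lower_bound} by imposing \eqref{sparsity}–\eqref{accuracy} to make the exponent bounded, exactly as sketched in the proof of Theorem~\ref{thm_main_lower} already given.
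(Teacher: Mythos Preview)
Your proposal is correct, and it takes a genuinely different route from the paper. The paper does \emph{not} use the chain-rule factorization of $\mathbf p_\beta$; instead it writes the joint law of $x_{1:T}$ as a single multivariate Gaussian $N(\tau_k,\Sigma_k)$ with $\Sigma_k=P_{\alpha_k}P_{\alpha_k}^\T$ for an explicit lower-triangular Toeplitz matrix $P_{\alpha_k}$, applies the closed-form multivariate-Gaussian KL formula
\[
KL(\mathbf p_i\|\mathbf p_j)=\tfrac12\bigl[\operatorname{tr}(\Sigma_j^{-1}\Sigma_i)-T+(\tau_j-\tau_i)^\T\Sigma_j^{-1}(\tau_j-\tau_i)\bigr],
\]
computes $\Sigma_j^{-1}$ explicitly (it is tridiagonal), and then bounds the trace term entry-by-entry to get the $C_3T$ piece and bounds the quadratic form via $|\tau_i^{(t)}|\le(\delta_s+s(t)\delta_0)/(1-\delta_s)$ to get the remaining two pieces. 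Your sequential decomposition, by contrast, exploits that the one-step conditionals under $\beta_i$ and $\beta_j$ share the same variance $\sigma_0^2$, so each conditional KL is just a squared mean difference, and the whole computation reduces to a second-moment bound on $x_{t-1}$ under $\mathbf p_i$. Your route is more elementary (no matrix inversion, no trace calculation) and makes the origin of each of the three terms more transparent; the paper's route is heavier but does not rely on the Markov structure, so it would still apply if the one-step conditionals had parameter-dependent variances. One minor bookkeeping point: your bound on $\sum_t(f_t^{(i)}-f_t^{(j)})^2$ also produces $O(T)$ and $O(\delta_0\sum_t s(t))$ contributions from $\mu^{(i)}-\mu^{(j)}$ and its cross term with the increments, not only the $O(\delta_0^2\sum_t s^2(t))$ piece you singled out; but since you recover all three orders in the final tally this does not affect the argument.
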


\begin{lemma}[Fano's inequality] Let $\cP = \{P_1,\dots,P_N\}$. For any random variable $Z$ taking values in $[N]$, we have \begin{equation}\label{fano}
    \frac{1}{N}\sum_{i=1}^N P_i\left(Z \not = i \right)\geq 1- \frac{\frac{1}{N^2}\sum_{i,j \in [N]} KL(P_i||P_j) +\log 2}{\log N},
\end{equation}where $KL(\cdot||\cdot)$ is the Kullback–Leibler (KL) divergence
\end{lemma}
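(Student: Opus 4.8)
The plan is to prove this as the standard averaged (Bayesian) form of Fano's inequality, combining the classical entropy estimate with a convexity bound that converts the mutual information into the pairwise KL average appearing on the right-hand side. First I would set up the reduction: introduce a latent index $J$ drawn uniformly from $[N]$ and generate the data $X$ so that, conditional on $J=i$, we have $X\sim P_i$. Since $Z=Z(X)$ is computed from the data (possibly with extra independent randomization), the triple $J\to X\to Z$ forms a Markov chain, and the left-hand side is exactly the average error probability $\frac{1}{N}\sum_{i=1}^N P_i(Z\neq i)=\mathrm{pr}(Z\neq J)=:P_e$.

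Next I would invoke the classical entropy form of Fano's inequality for the Markov chain, namely $H(J\mid Z)\le \log 2 + P_e\log N$, obtained from the standard bound $H(J\mid Z)\le h(P_e)+P_e\log(N-1)$ together with $h(P_e)\le\log 2$ (where $h$ is the binary entropy) and $\log(N-1)\le\log N$. Because $J$ is uniform, $H(J)=\log N$ and $H(J\mid Z)=\log N-I(J;Z)$, so rearranging gives
$$P_e\ge 1-\frac{I(J;Z)+\log 2}{\log N}.$$
Applying the data-processing inequality to $J\to X\to Z$ yields $I(J;Z)\le I(J;X)$, so it remains to control $I(J;X)$.

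The key step is to rewrite and bound the mutual information in terms of the pairwise KL divergences. Writing $\bar P=\frac{1}{N}\sum_{j=1}^N P_j$ for the mixture, the uniform-prior identity gives $I(J;X)=\frac{1}{N}\sum_{i=1}^N KL(P_i\,\|\,\bar P)$. I would then use that $KL(P_i\,\|\,\cdot)$ is convex in its second argument, so Jensen's inequality gives $KL(P_i\,\|\,\bar P)\le\frac{1}{N}\sum_{j=1}^N KL(P_i\,\|\,P_j)$ for each $i$; summing over $i$ produces $I(J;X)\le\frac{1}{N^2}\sum_{i,j\in[N]}KL(P_i\,\|\,P_j)$. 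Substituting this bound, together with $I(J;Z)\le I(J;X)$, back into the displayed inequality yields exactly \eqref{fano}.

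The main obstacle---really the only nontrivial point---is the passage from the mixture representation of the mutual information to the pairwise KL average, i.e. the convexity argument that $I(J;X)=\frac1N\sum_i KL(P_i\|\bar P)\le \frac1{N^2}\sum_{i,j}KL(P_i\|P_j)$. The entropy form of Fano's inequality and the data-processing step are standard, but one must verify the mutual-information identity under the uniform prior and apply convexity of KL in the second slot in the correct direction; this is precisely what makes the right-hand side involve averaged rather than worst-case pairwise divergences, which is the form actually needed when bounding the KL divergence over the packing in Proposition~\ref{thm_lower_bound}.
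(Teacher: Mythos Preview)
Your proof is correct and follows the standard derivation of this averaged form of Fano's inequality: the uniform-prior Markov chain $J\to X\to Z$, the classical entropy bound $H(J\mid Z)\le \log 2+P_e\log N$, data processing, and the convexity-of-KL step to pass from $I(J;X)=\frac{1}{N}\sum_i KL(P_i\|\bar P)$ to the pairwise average $\frac{1}{N^2}\sum_{i,j}KL(P_i\|P_j)$.

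There is nothing to compare against, however: the paper states this lemma as a known result and does not supply a proof. It simply invokes \eqref{fano} directly in the proof of Proposition~\ref{thm_lower_bound} (and in fact uses the cruder bound $\frac{1}{N^2}\sum_{i,j}KL(P_i\|P_j)\le\max_{i,j}KL(P_i\|P_j)$ afterward). Your write-up is more than the paper provides; the argument you give is the canonical one and would serve perfectly well as a self-contained proof if one were desired.
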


By Fano's inequality \eqref{fano}, we have for any r.v. $Z$\begin{equation} \label{fano_ineq}
    \frac{1}{N}\sum_{i=1}^N \mathrm{pr}_{\beta_i}\left(Z \not = i \right)\geq 1- \frac{\max_{i,j \in [N]} KL(\mathbf{p}_i||\mathbf{p}_j) +\log 2}{\log N}.
\end{equation}

For any estimator $\Tilde{\beta}_T$, define \begin{equation}\label{def_psi}
    \hat \psi = \psi(\Tilde{\beta}_T) = \argmin_{i \in [N]} \norm{\Tilde{\beta}_T - \beta_i}_2,
\end{equation}which is the index for the element closest to $\Tilde{\beta}_T$ (in $\ell_2$ norm sense) in the $\varepsilon$-packing $\cN$.

Therefore, for any $\hat \psi \not = i$, we have\begin{equation*}
    \begin{split}
        \norm{\Tilde{\beta}_T - \beta_i}_2 \geq \norm{\beta_{\hat\psi} - \beta_i}_2 - \norm{\Tilde{\beta}_T - \beta_{\hat\psi}}_2\geq \norm{\beta_{\hat\psi} - \beta_i}_2 - \norm{\Tilde{\beta}_T - \beta_i}_2,
    \end{split}
\end{equation*} 
where the last inequality comes from \eqref{def_psi}. 

Re-arrange terms in the inequality above and we will have $$\norm{\Tilde{\beta}_T - \beta_i}_2 \geq \frac{1}{2}\norm{\beta_{\hat\psi} - \beta_i}_2 \geq \frac{\varepsilon}{2},$$where the last inequality comes from the definition of $\varepsilon$-packing, i.e. $$\min_{i\not=j} \norm{\beta_i - \beta_j}_2 >\varepsilon.$$

This means when $\beta = \beta_i$, event $\{\hat \psi \not = i\}$ is subset of event $\{\norm{\Tilde{\beta}_T - \beta_i}_2 \geq  \varepsilon/2\}$. Therefore, we have
\begin{equation*}
    \begin{split}
        \sup_{\beta \in \Theta_T} \mathrm{pr}_{\beta}\left(\norm{\Tilde{\beta}_T-\beta}_2 \geq \varepsilon/2 \right)
        &\geq \sup_{\beta \in \cN} \mathrm{pr}_{\beta}\left(\norm{\Tilde{\beta}_T-\beta}_2 \geq \varepsilon/2 \right) \\
        & \geq \max_{i \in [N]} \mathrm{pr}_{\beta_i}\left(\hat \psi \not = i \right)\geq \frac{1}{N}\sum_{i=1}^N \mathrm{pr}_{\beta_i}\left(\hat \psi \not = i \right).
    \end{split}
\end{equation*}



Taking $Z = \hat \psi$ and $\varepsilon = 2C_2$ in \eqref{fano_ineq_final}, by \eqref{ep_packing} and \eqref{fano_ineq}, we complete the proof.
\end{proof}

\begin{proof}[Proof of Lemma~\ref{KL_upper_bound}]

For $ x_{1:T}$ generated by \eqref{DGP2} with $\beta = (\alpha,\mu,\Delta_2,\dots,\Delta_T)^\T$, we can derive that \begin{equation}\label{xt_expression}
    x_t = \frac{1-\alpha^{t}}{1-\alpha} \mu + \sum_{i=2}^t \frac{1-\alpha^{t+1-i}}{1-\alpha} \Delta_i + \sum_{i=1}^t \alpha^{t-i} \varepsilon_i, \quad t= 1, \dots, T,
\end{equation}where for $t=1$ the second term is zero. We further denote $$\tau^{(t)} = \frac{1-\alpha^{t}}{1-\alpha} \mu + \sum_{i=2}^t \frac{1-\alpha^{t+1-i}}{1-\alpha} \Delta_i, \text{ and } B^{(t)} = \sum_{i=1}^t \alpha^{t-i} \varepsilon_i.$$

Therefore, if the random noise in \eqref{DGP2} is Gaussian, then the joint distribution for $ x_{1:T}$ will be $N(\tau,\Sigma)$, where $\tau = (\tau^{(1)},\tau^{(2)},\dots,\tau^{(T)})^\T$, $\Sigma = P_\alpha P_\alpha^\T$ and \begin{equation*}
P_\alpha = \left(\begin{array}{cccccc}
\alpha^0&  & & & & \\
\alpha^1& \alpha^0 &   & & \\
\alpha^2& \alpha^1 & \alpha^0 & & \\
\vdots& \vdots & & & \ddots \\
\alpha^{T-1}& \alpha^{T-2} &  \dots& \dots & \dots& \alpha^0  \\
\end{array}\right).
\end{equation*}

By some simple algebra, we will obtain $\det(\Sigma) = \det(P_\alpha P_\alpha^\T) = \det(P_\alpha)^2 = 1$ and \begin{equation*}
\Sigma^{-1} = \left(\begin{array}{ccccccc}
\alpha^2+1& -\alpha & & & & & \\
-\alpha& \alpha^2+1 & -\alpha  & & & \\
 &-\alpha& \alpha^2+1 & -\alpha & & \\
 & & \ddots&\ddots & \ddots \\
  &  &  & -\alpha& \alpha^2+1 & -\alpha\\
  &  &  & & -\alpha & 1 \\
\end{array}\right).
\end{equation*}

Arbitrarily choose two distinct coefficients from $\cN$. Without loss of generality, we denote them to be $\beta_i$ ($=1, 2$). Given $x_1$, denote the joint p.d.f. of $ x_{1:T}$ generated by \eqref{DGP2} with coefficient $\beta$ by $\mathbf{p}( x_{1:T}|x_1;\beta)$. For simplicity, we denote $\mathbf{p}( x_{1:T}|x_1;\beta_i) = \mathbf{p}_i$ for $\beta_i \in \cN$, $i = 1, \dots, N$. 

By the derivation above, $\mathbf{p}_i$ is joint p.d.f. of $N(\tau_i,\Sigma_i)$. Then the KL divergence between these two $(T-1)-$dimensional multivariate Gaussian distributions is\begin{equation} \label{KL_gaussian}
    \begin{split}
        &KL(N(\tau_1,\Sigma_1)||N(\tau_2,\Sigma_2)) 
=  \int \log \frac{p_1(x)}{p_2(x)}  p_1(x) d x\\
=&\int\frac{1}{2} \left[ \log \frac{\left|\Sigma_{2}\right|}{\left|\Sigma_{1}\right|}-\left(x-\tau_{1}\right)^\T \Sigma_{1}^{-1}\left(x-\tau_{1}\right)+\left(x-\tau_{2}\right)^\T \Sigma_{2}^{-1}\left(x-\tau_{2}\right)\right]  p_1(x) d x \\
=&\frac{1}{2} \log \frac{\left|\Sigma_{2}\right|}{\left|\Sigma_{1}\right|}-\frac{1}{2} \operatorname{tr}\left\{E\left[\left(x-\tau_{1}\right)\left(x-\tau_{1}\right)^\T\right] \Sigma_{1}^{-1}\right\}+\frac{1}{2} E\left[\left(x-\tau_{2}\right)^\T \Sigma_{2}^{-1}\left(x-\tau_{2}\right)\right] \\
=&\frac{1}{2} \log \frac{\left|\Sigma_{2}\right|}{\left|\Sigma_{1}\right|}-\frac{1}{2} \operatorname{tr}\left\{I_{T}\right\}+\frac{1}{2}\left(\tau_{1}-\tau_{2}\right)^\T \Sigma_{2}^{-1}\left(\tau_{1}-\tau_{2}\right)+\frac{1}{2} \operatorname{tr}\left\{\Sigma_{2}^{-1} \Sigma_{1}\right\} \\
=&\frac{1}{2}\left[\log \frac{\left|\Sigma_{2}\right|}{\left|\Sigma_{1}\right|}-T+\operatorname{tr}\left\{\Sigma_{2}^{-1} \Sigma_{1}\right\}+\left(\tau_{2}-\tau_{1}\right)^\T \Sigma_{2}^{-1}\left(\tau_{2}-\tau_{1}\right)\right].
    \end{split}
\end{equation}

Since $\det(\Sigma_1) = \det(\Sigma_2) = 1$, we have
\begin{equation}\label{KL}
    KL(\mathbf{p}_1||\mathbf{p}_2) = \frac{1}{2}\left[\operatorname{tr}\left\{\Sigma_{2}^{-1} \Sigma_{1}\right\}-T+\left(\tau_{2}-\tau_{1}\right)^\T \Sigma_{2}^{-1}\left(\tau_{2}-\tau_{1}\right)\right].
\end{equation}

On one hand, by the explicit form of $\Sigma_1$ as well as $\Sigma_2$, we can derive that the explicit form of the diagnoal elements of $\Sigma_{2}^{-1} \Sigma_{1}$. For $i=2,\dots,T-1$, we have
\begin{equation*}
    \begin{split}
        \left(\Sigma_{2}^{-1} \Sigma_{1}\right)_{i,i} &= \sum_{k=1}^{T-1} \left(\Sigma_{2}^{-1} \right)_{k,i}\left( \Sigma_{1}\right)_{k,i}\\
        & = -\alpha_2 \left(\left( \Sigma_{1}\right)_{i-1,i} + \left( \Sigma_{1}\right)_{i+1,i}\right) + (\alpha_2^2 + 1) \left( \Sigma_{1}\right)_{i,i}\\
        & = -\alpha_1\alpha_2 \left( 2 \frac{1-\alpha_1^{2i-2}}{1-\alpha_1^2} + \alpha_1^{2i-2}\right) + (\alpha_2^2 + 1) \frac{1-\alpha_1^{2i}}{1-\alpha_1^2}\\
        &\leq   \frac{\alpha_2^2 + 2|\alpha_1\alpha_2| +1 }{1-\alpha_1^2} \leq   \frac{ 3\delta_s^2+1 }{1-\delta_s^2}  .
    \end{split}
\end{equation*}

Similarly, we can derive the expression for $\left(\Sigma_{2}^{-1} \Sigma_{1}\right)_{1,1}$ and $\left(\Sigma_{2}^{-1} \Sigma_{1}\right)_{T,T}$ and upper bound them by some constant. This means all diagonal elements are bounded uniformly by a constant. Thus, we have \begin{equation}\label{KL_trace}
    \operatorname{tr}\left\{\Sigma_{2}^{-1} \Sigma_{1}\right\}-T \leq (c_6-1)T,
\end{equation}where constant $c_6$ is the uniform upper bound the constant and we can  show $c_6 > 1$.

On the other hand, for $ i=1, 2$, we have  $$|\tau_i^{(t)} |=\left| \frac{1-\alpha_i^{t-1}}{1-\alpha_i} \mu_i + \sum_{j=2}^t \frac{1-\alpha_i^{t+1-j}}{1-\alpha_i} \Delta_{i,j}\right| \leq \frac{\delta_s + s(t) \delta_0}{1-\delta_s}.$$

Therefore, we have $$|\tau_1 - \tau_2| \leq \frac{2}{1-\delta_s} (\delta_s + s(1) \delta_0,\dots, \delta_s + s(T) \delta_0)^\T = \frac{2}{1-\delta_s}(\delta_s \mathbf 1 + \delta_0  s_{1:T}) ,$$ where $\mathbf 1 \in \RR^{T}$ is the vector of ones, $ s_{1:T} = (s(1),\dots,s(T))^\T$ and the inequality is pointwise. 

Denote\begin{equation*}
\Tilde{\Sigma}_{2}^{-1} = \left(\begin{array}{ccccccc}
\alpha_2^2+1& |\alpha_2|& & & & & \\
|\alpha_2|& \alpha_2^2+1 & |\alpha_2| & & & \\
 &|\alpha_2|& \alpha_2^2+1 & |\alpha_2|& & \\
 & & \ddots&\ddots & \ddots \\
  &  &  & |\alpha_2|& \alpha_2^2+1 & |\alpha_2|\\
  &  &  & & |\alpha_2|& 1 \\
\end{array}\right),
\end{equation*}we can get\begin{equation*}
    \begin{split}
        & \left|\left(\tau_{2}-\tau_{1}\right)^\T \Sigma_{2}^{-1}\left(\tau_{2}-\tau_{1}\right) \right|\\
         \leq& \left|\tau_{2}-\tau_{1}\right|^\T \Tilde{\Sigma}_{2}^{-1}\left|\tau_{2}-\tau_{1}\right|\\
       =   & \left(\frac{2}{1-\delta_s}\right)^2 \left(\delta_s^2 \mathbf 1^\T \Tilde{\Sigma}_{2}^{-1} \mathbf 1 + 2 \delta_s \delta_0 \mathbf 1^\T \Tilde{\Sigma}_{2}^{-1}  s_{1:T} + \delta_0^2  s_{1:T}^\T \Tilde{\Sigma}_{2}^{-1}  s_{1:T}\right).
    \end{split}
\end{equation*}

We can upper bound the last three terms above as follows (notice that $s(1)=0$) :
\begin{equation*}
    \begin{split}
            & \mathbf 1^\T \Tilde{\Sigma}_{2}^{-1} \mathbf 1 \leq \left(1 + |\alpha_2|\right)^2 T;\\
        & \mathbf 1^\T \Tilde{\Sigma}_{2}^{-1}  s_{1:T} \leq (1 + |\alpha_2|)^2 (s(2)+\cdots+s(T))  ;\\
                & s_{1:T}^\T \Tilde{\Sigma}_{2}^{-1}  s_{1:T} =\sum_{i=2}^{T-2}  \left( |\alpha_2| (s(i)+s(i+2)) + (\alpha_2^2+1) s(i+1)\right) s(i+1) \\
        & \quad \quad \quad \quad \quad \quad \quad  +  (\alpha_2^2+1) s(2)^2 + |\alpha_2|(s(2)s(3) + s(T-1)s(T)) + s(T)^2.
    \end{split}
\end{equation*}
By \eqref{KL}, \eqref{KL_trace} and last four inequalities, we prove Lemma~\ref{KL_upper_bound}.
\end{proof}

\section{Additional experiments}\label{appendix:add_exp}
\subsection{Numerical simulation}

We set $\alpha_1 = 0.1, \sigma_0^2 = 0.1$, $\delta_0 = 0.1$ and choose $s \in \{20, 200, 1000\}$.
For each $s$, we plot $\hat \alpha_1$ and $\delta$ selected by Ljung-Box test with respect to time $T$. The result is in Figure~\ref{exp5}.

\begin{figure}[H]
\centering
\subfigure{\includegraphics[width=\linewidth]{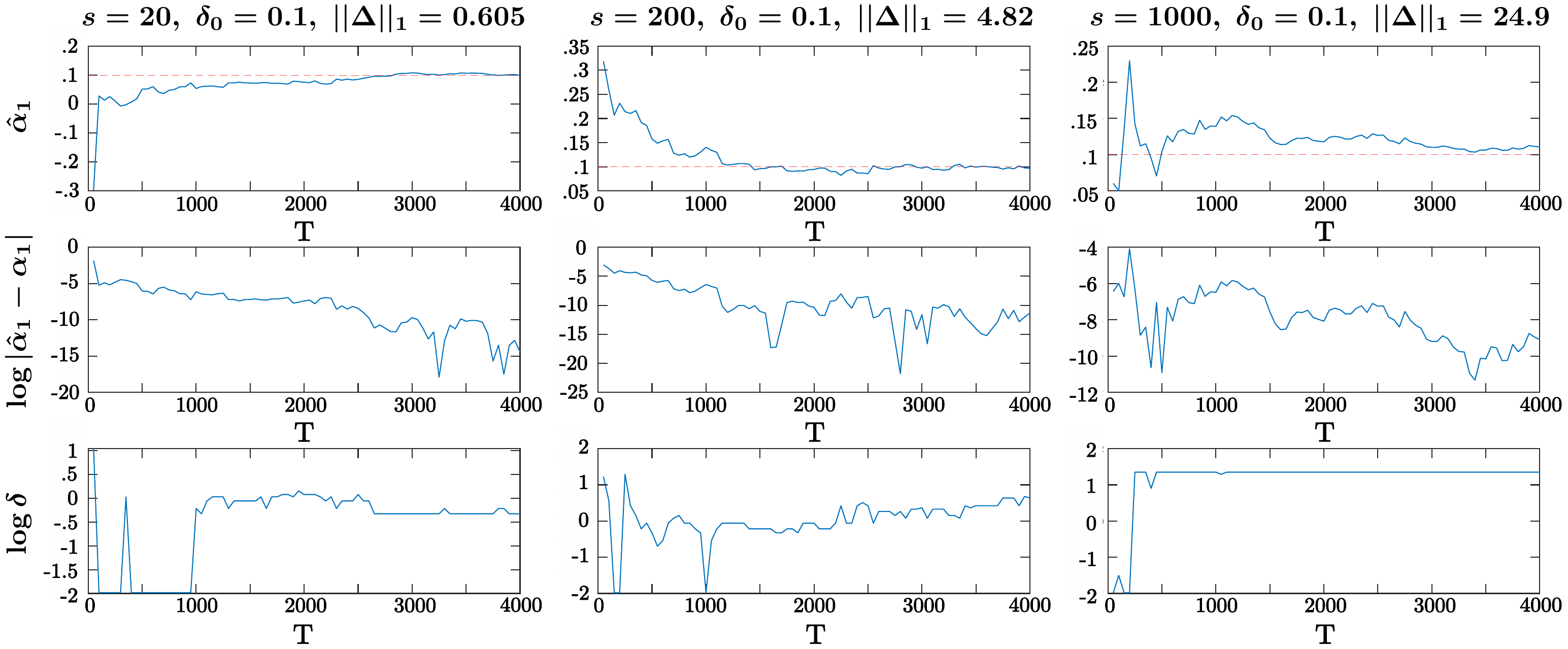}}
    \caption{Algorithmic behavior with respect to $T$. The red dashed line is the ground truth $\alpha_1 = 0.1$. From the second row, we can observe that the estimation error converges to a larger value with increasing $s$.}\label{exp5}
\end{figure}

We have two main observations from Figure~\ref{exp5}: (i) the estimate $\hat \alpha_1$ will converge to an $\varepsilon$-optimal solution, but cannot converge to the ground truth and (ii) for larger $\norm{ \Delta}_1$, which is equivalent to larger $s$ and $\delta_0$, the estimation error after convergence will grow larger. Apart from this, we can see the behavior of the estimation error are similar to that of the tuning parameter $\delta$ selected by Ljung-Box test --- they converge at the same time. This validates our main theorem on the upper bound of the estimation error \eqref{upper_error_bound}. We also try more experimental settings ($s \in \{1000,2000,3000\}$ and $\delta_0 \in \{0.05,0.1\}$). We obtain similar results in Figure~\ref{exp6}.

\begin{figure}[htp]
\centering
\subfigure{\includegraphics[width=\linewidth]{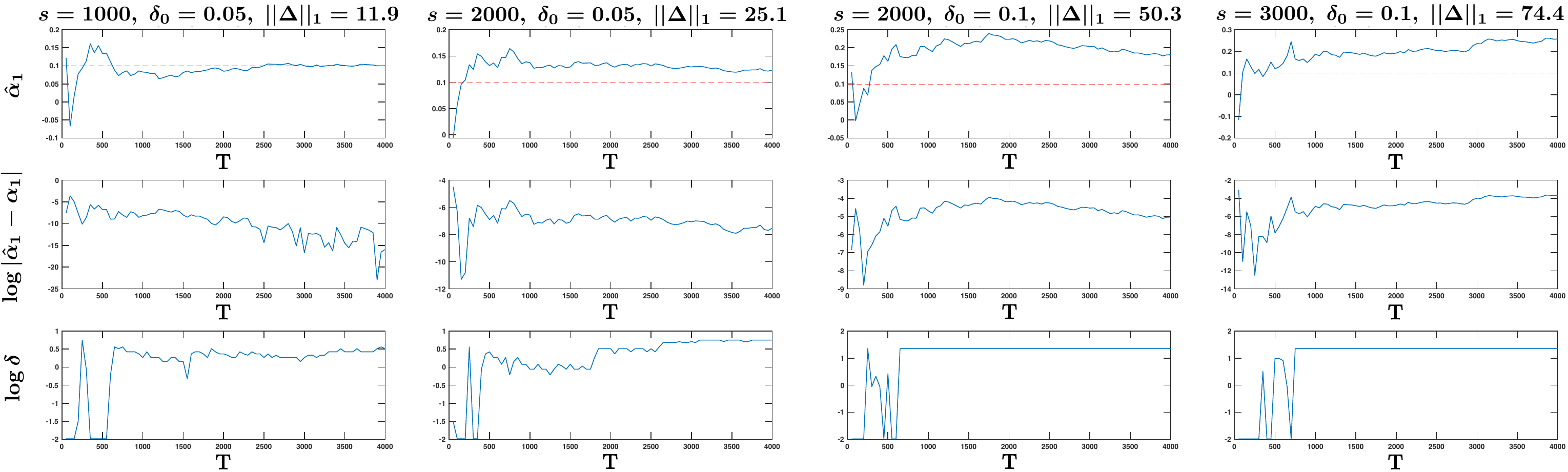}}
    \caption{In each column: the experimental settings, rate for sparse changes, one-step changes' magnitude and their total variation are listed on the top and the rest of the experimental settings are the same with Figure~\ref{exp5}; from top to the bottom, we plot $\hat \alpha_1$, logarithm of $\ell_2$ estimation error $\log |\hat \alpha_1 -\alpha_1|$ and logarithm of tuning parameter $\delta$ selected by Ljung-Box test with respect to $T$. In the fist column, the red dashed line is the ground truth $\alpha_1 = 0.1$. We can see that with increasing $s$ and  $\delta_0$, the estimation accuracy becomes lower.}\label{exp6}
\end{figure}

\textit{Validation for a more general  \textsc{ar}$(p)$ case. } Here, we take  \textsc{ar}$(2)$ as an example.
We fix $\alpha_1 = 0.1, \sigma_0^2 = 0.1$ and $\delta_0 = 0.1$. We choose $s \in \{200, 1000, 2000, 3000\}$. Similarly, the dynamic background generating mechanism, estimation and parameter tuning procedure is the same as what we did in last section. We also apply Golden-section search (tolerance $\varepsilon$ is set to be 0.04) here. For each $s$, we plot the same algorithmic with respect to time $T$ in Figure~\ref{exp7}.

\begin{figure}[htp]
\centering
\subfigure{\includegraphics[width=\linewidth]{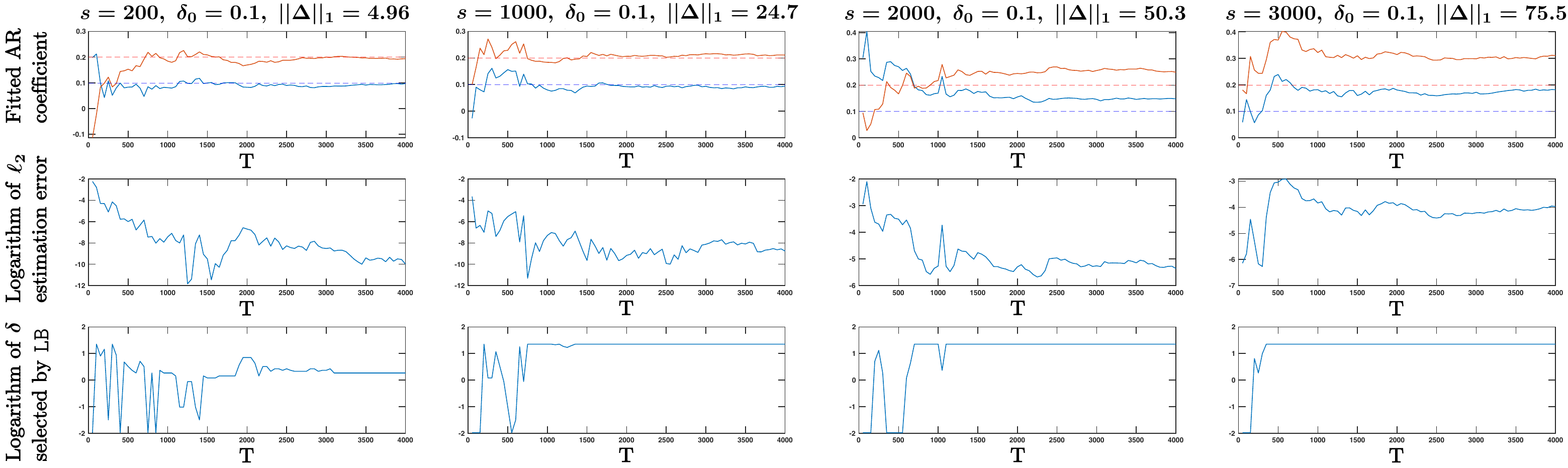}}
    \caption{In each column: the experimental settings, rate for sparse changes, one-step changes' magnitude and their total variation, are listed on the top; from top to the bottom, we plot $\hat \alpha_i \ (i=1, 2)$, logarithm of $\ell_2$ estimation error $\log \sqrt{(\hat \alpha_1 -\alpha_1)^2 + (\hat \alpha_2 -\alpha_2)^2}$ and logarithm of tuning parameter $\delta$ selected by Ljung-Box test with respect to $T$. In the fist column, the blue and red dashed line correspond to the ground truth $\alpha_1 = 0.1$ and $\alpha_1 = 0.2$, respectively. We can see that with increasing $s$ and  $\delta_0$, the estimation accuracy becomes lower, which is the same with  \textsc{ar}$(1)$ case.}\label{exp7}
\end{figure}

We can see the results are similar to that of Figures~\ref{exp5} and~\ref{exp6}. Similarly to the analysis above, we validate our theoretical findings for  \textsc{ar}$(2)$ case. 

\textit{Comparison with polynomial variant.} Apart from piecewise constant function class, polynomial is another highly expressive function class. \citet{xu2008bootstrapping} proposed to use $n$th order polynomials (\texttt{n-poly}) to approximate the unstructured dynamics in non-stationary autoregressive time series. Then the autoregressive coefficients and polynomial coefficients are estimated via ordinary least square (OLS). However, he did not give instructions on how to choose $n$ in practice. Here, we choose $n \in \{3,5,10\}$ and compare \texttt{n-poly} with our proposed methods under the setting: $\alpha_1 = 0.1$, $\sigma_0^2 = 0.1$, $s=2000$, $\delta_0=0.05$, $\norm{ \Delta}_1 = 24.9$. The results are plotted in Figure~\ref{fig:exp_comparison_extended}.

\begin{figure}[htp]
\centering
\subfigure{\includegraphics[width=\linewidth]{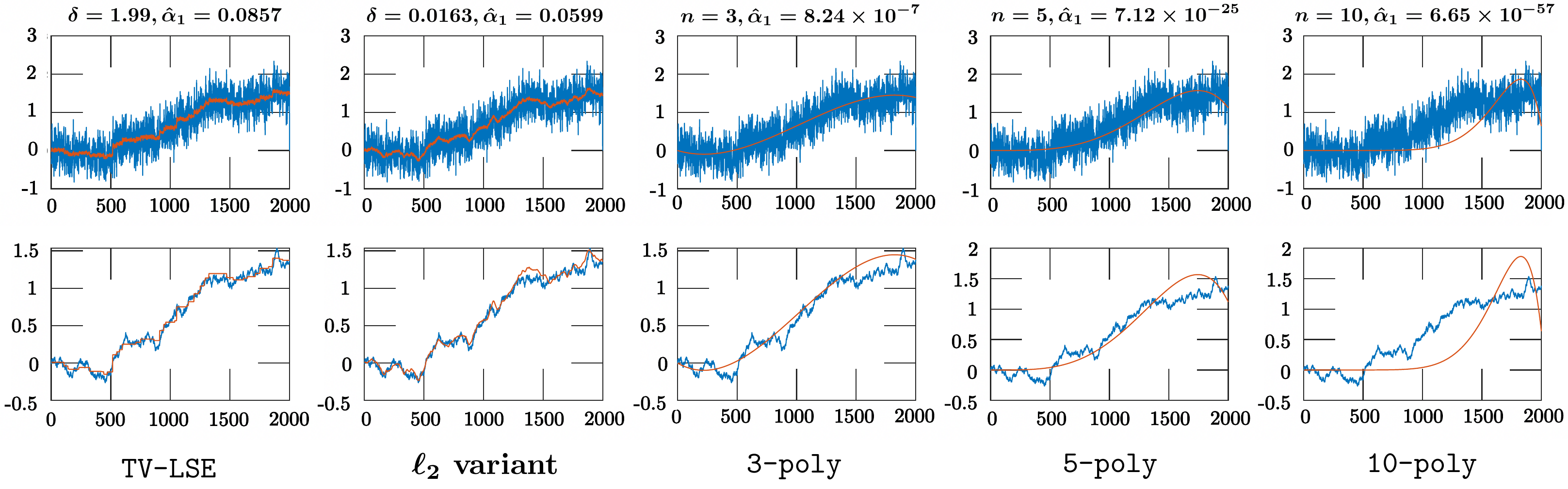}}
    \caption{Comparison to \texttt{n-poly} due to \citet{xu2008bootstrapping} with $n \in \{3,5,10\}$ . The corresponding hyperparameters and  \textsc{ar}$(1)$ estimate are on the top of each column. We can see \texttt{n-poly} yields a very biased $\hat \alpha_1$ (even though \texttt{3-poly} faithfully captures the dynamics). }\label{fig:exp_comparison_extended}
\end{figure}

From the figure above, we can see that all three polynomial methods considered here do not yield accurate estimate for \textsc{ar}$(1)$ series with highly unstructured dynamics. This is not surprising since polynomials are less expressive compared to piecewise constant function. Obviously, \texttt{n-poly} will perform better when the dynamics is smoother and more structured.

\subsection{Detailed estimation procedure in real data experiment}

Here, we take subject 23 as an example to show why we choose to use logarithm transform in detail. First, we directly apply our proposed estimator on the RT sequence with hyperparameter selected by Ljung-Box test, as is detailed in proposed tuning procedure. Since we do not have the ground truth, we can only access the goodness-of-fit by assessing how close our residual sequence resembles white noise. We plot the histogram as well as the QQ-plot of the fitted residual sequence. These two plots are shown in the first row in Figure~\ref{fig:realexp_rt_subj23}.

\begin{figure}[htp]
\centering
\subfigure{\includegraphics[width=\linewidth]{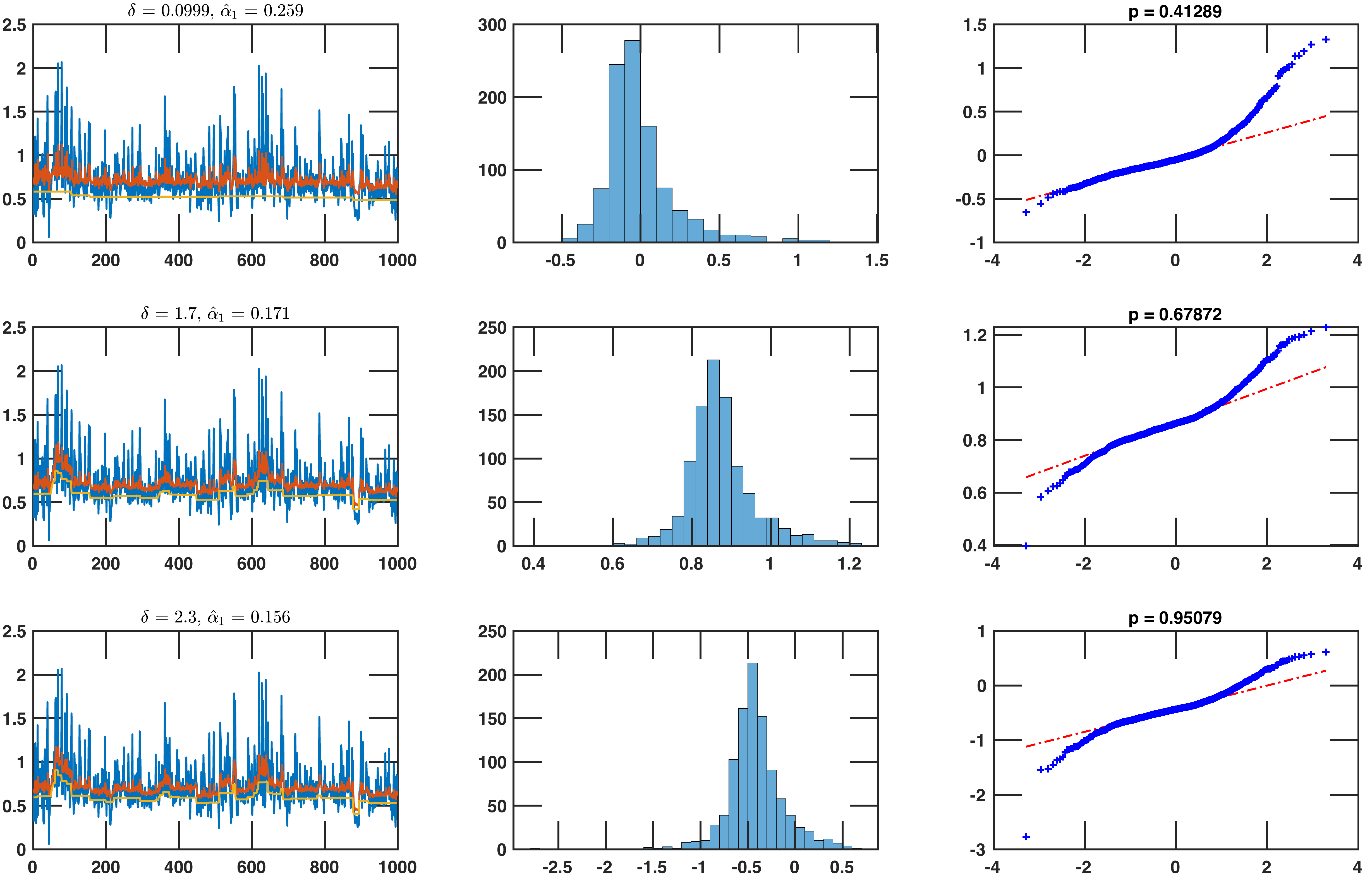}}
\caption{Experimental results of applying our proposed estimator to subject 23 with hyperparameter $\delta_{ori}$ (top), $\delta_{cr}$ (middle) and $\delta_{log}$ (bottom). The first column plots raw observation (blue), fitted  \textsc{ar}$(1)$ model (red) and fitted dynamic background (yellow) with hyperparameter $\delta$ and estimated  \textsc{ar}$(1)$ coefficient $\hat \alpha_1$ on the top; the second and third column plot the histogram and quantile-quantile (QQ) plot of (original, cube root of and logarithm of) residuals (with $p_{ori},p_{cr},p_{log}$ on the top). }\label{fig:realexp_rt_subj23}
\end{figure}

The histogram shows that the residuals are right-skewed --- in fact this is true for nearly all subjects. Ljung–Box test is commonly used in autoregressive integrated moving average (ARIMA) modeling, which requires Gaussian random noise assumption, and clearly this assumption breaks in this study. Therefore, the $p$-value of Ljung-Box test directly applied to residual sequence may not be a reasonable metric for the goodness-of-fit, which undermines the validity of $\delta$ selected by Ljung-Box test. Nevertheless, testing for remaining serial correlation in the residual sequence is the ultimate goal of applying Ljung-Box test. Thus, we can transform the residuals to more closely approximate a Gaussian distribution and then apply the Ljung-Box test on the transformed residuals to check for serial correlation. 

For right-skewed data, the most commonly used transforms are cube root and logarithm. We apply both transforms here. The transforms are performed by first subtracting $1.1 \times \min$ residuals from the residual sequence (to make sure we obtain meaningful values after logarithm), and then applying cube root or logarithm transform to this sequence.

We perform the aforementioned hyperparameter tuning procedure inn proposed tuning procedure for original and transformed residuals. More precisely, the $p$-value in step 2.(ii) is obtained by applying Ljung-Box test on original, cube root and logarithm of residuals. For each method, we denote the selected hyperparameter $\delta$ and the maximum of $p$-value to be $(\delta_{ori},p_{ori}), (\delta_{cr},p_{cr}), (\delta_{log},p_{log})$, respectively. We illustrate all these three methods on subject 23 by plotting the fitted  \textsc{ar}$(1)$ model, fitted dynamic background, histogram and QQ-plot of the residual sequence in Figure~\ref{fig:realexp_rt_subj23}.


Figure~\ref{fig:realexp_rt_subj23} shows that for subject 23 (i) from the first column, the first method clearly underfits the dynamic background; (ii) from the second column, the last histogram is much more symmetric and closely resembles p.d.f. of normal distribution; (iii) from the third column, the last method has larger $p$-value, indicating less serial correlation remained in residual sequence. This again shows that why we use $p$-value to select the hyperparameter --- it is a easy-to-use metric which correctly indicates whether the dynamic background is fitted properly. Moreover, we see that the third method, i.e. using logarithm transform, is the best for subject 23. In fact, logarithm transform the best for almost all subjects in the sense that $p_{log}$ is the largest among $p_{ori},p_{cr},p_{log}$. We also observe that for those subjects that $p_{log}$ is not the largest, the tuning parameter $\delta$ selected by all three methods are the same. Therefore, we adopt logarithm transform in our real data experiment.

\end{document}